\newtheorem*{rep@theorem}{\rep@title}
\newcommand{\newreptheorem}[2]{%
\newenvironment{rep#1}[1]{%
 \def\rep@title{#2 \ref{##1}}%
 \begin{rep@theorem}}%
 {\end{rep@theorem}}}
\newtheorem{intro_thm}{Theorem}
\newtheorem{intro_cor}[intro_thm]{Corollary}
\newtheorem{lemma}{Lemma}[section]
\newtheorem{thm}[lemma]{Theorem}
\newtheorem{prop}[lemma]{Proposition}
\newtheorem{cor}[lemma]{Corollary}
\theoremstyle{definition}
\newtheorem{defn}[lemma]{Definition}
\newtheorem{rem}[lemma]{Remark}
\newtheorem{setup}[lemma]{Setup}
\newtheorem*{rem*}{Remark}
\theoremstyle{definition}
\numberwithin{equation}{section}
\renewcommand{\epsilon}{\varepsilon}
\renewcommand{\tilde}{\widetilde}
\renewcommand{\hat}{\widehat}
\renewcommand{\S}{\mathcal{S}}
\renewcommand{\phi}{\varphi}
\newcommand{\N}{\mathbb{N}}
\newcommand{\R}{\mathbb{R}}
\newcommand{\alt}{\textup{alt}}
\newcommand{\acts}{\curvearrowright}
\newcommand{\id}{\textup{id}}
\newcommand{\Aut}{\textup{Aut}}
\newcommand{\K}{\mathcal{K}}
\newcommand{\Elle}{\mathcal{L}}
\newcommand{\A}{\mathcal{A}}
\newcommand{\Cone}{\mathrm{Cone}}
\newcommand{\vertsubseteq}{\rotatebox[origin=c]{90}{$\subseteq$}}
\newcommand{\twprod}{\mathbin{%
		\ooalign{\raise1.15ex\hbox{$\scriptstyle\sim$}\cr\hidewidth$\times$\hidewidth\cr}%
}}
\newcommand{\twprodboth}{\mathbin{%
		\ooalign{\raise1.15ex\hbox{$\scriptstyle(\sim)$}\cr\hidewidth$\times$\hidewidth\cr}%
}}
\DeclareMathAlphabet{\mathcal}{OMS}{cmsy}{m}{n}
\long\def\forget#1{}
\begin{document}

\title[(super)additivity of simplicial volume]{On the (super)additivity of simplicial volume}

\author[]{Pietro Capovilla}
\address{Scuola Normale Superiore, Pisa Italy}
\email{pietro.capovilla@sns.it}

\thanks{}

\keywords{bounded cohomology, multicomplexes, aspherical manifolds}
\subjclass[1000]{55N10, 55U10, 57N65}
\begin{abstract}
	We show that the simplicial volume is superadditive with respect to gluings along certain submanifolds of the boundary.
	Our criterion applies to boundary connected sums and 1-handle attachments.
	Moreover, we generalize a well-known additivity result in the case of aspherical manifolds.
	Our arguments are based on new results about relative bounded cohomology and pairs of multicomplexes, which are of independent interest.
\end{abstract}

\maketitle

\section{Introduction}

The simplicial volume is a homotopy invariant of manifolds introduced by Gromov in his pioneering paper \cite{Gro82}. If $M$ is an oriented, compact manifold with (possibly empty) boundary, then the simplicial volume $\|M,\partial M\|$ of $M$ is the $\ell^1$-seminorm of the real fundamental class of $M$ (see Section \ref{sec: simplicial volume} for the precise definition).
Unless otherwise stated, every manifold in this paper is assumed to be compact and oriented.

Gromov Additivity Theorem can be used to compute the simplicial volume of manifolds that are obtained by gluing smaller manifolds. 
More precisely, the case of connected sums or gluings along $\pi_1$-injective boundary components with amenable fundamental group was considered in \cite{BBF+14, Kue15}, and the case of gluings along amenable portions of the boundary is considered in \cite{Kue15}.
We refer the reader to the references in \cite{BBF+14} and \cite{Kue15} for applications of Gromov Additivity Theorem in several areas of geometric and low-dimensional topology.
Without aiming to be exhaustive, let us point out that recently it has been exploited to study rigidity properties of generalized graph manifolds \cite{FLS15, CSS19}, to compute the spectrum of simplicial volumes of compact manifolds \cite{HL21} and the simplicial volume of mapping tori of 3-manifolds \cite{BN20}.
The additivity of simplicial volume for gluings along amenable boundaries has also been adapted in \cite{LLM22} to the case of boundedly acyclic boundaries.

Gromov Additivity Theorem, first stated in \cite{Gro82}, has been addressed independently in \cite{BBF+14} and in \cite{Kue15} with two radically different approaches. 
In the former, the authors deduce the additivity of simplicial volume by using techniques from homological algebra developed by Ivanov in \cite{Iva87}. 
In the latter, the proof of the main theorem relies on the theory of multicomplexes, following the original approach of Gromov.
The foundations of the theory of multicomplexes have been recently laid down by the work of Frigerio and Moraschini \cite{FM23}.
The goal of this paper is to exploit Frigerio and Moraschini's framework in order to provide a complete and self-contained proof of Gromov's Additivity Theorems (see Theorem \ref{thm: superadditivity} and Theorem \ref{thm: additivity full boundary components} below). 
A similar approach was developed by Kuessner in \cite{Kue15}, where, however, some problems arise especially in the case when the manifolds involved are not aspherical and the gluings are performed along proper submanifolds of the boundary.
We refer the reader to Remark \ref{rem: invariant chains in Kuessner} and to Subsection \ref{subsec: role of higher homotopy} for a discussion on the relationship between our and Kuessner's framework.
Also observe that \cite{BBF+14} does not deal at all with gluings along proper submanifolds of boundary components. 
\begin{intro_thm}
	\label{thm: superadditivity}
	Let $M_1,\dots, M_k$ be triangulated $n$-manifolds with $\pi_1$-injective boundary, $n \in \N_{\geq 2}$. Let $(S_1^+,S_1^-),\dots,(S_h^+,S_h^-)$ be a pairing of some oriented compact connected pairwise-disjoint $(n-1)$-submanifolds of $\partial M_1\sqcup\dots \sqcup \partial M_k$.
	Let $M$ be the manifold obtained by gluing $M_1,\dots, M_k$ along the pairing.
	Assume that the following conditions hold for every $i\in \{1,\dots,h\}$ and every $j \in \{1,\dots, k\}$:
	\begin{itemize}
		\item[$(1)$] $S_i^\pm$ has an amenable fundamental group;
		\item[$(2)$] $\partial S_i^\pm$ is path-connected, $\pi_1$-injective in the corresponding $\partial M_j$ and the map $\pi_1(\partial S_i^{\pm} \hookrightarrow S_i^{\pm},x)$ is an isomorphism for every $x \in \partial S_i^{\pm}$.
	\end{itemize}
	Then there exists a constant $C_n \in \N_{\geq 1}$ (depending only on the dimension) such that
	\[
	(2 C_n)^3 (n+2)^3\cdot\|M,\partial M\| \geq \|M_1,\partial M_1\| + \dots + \|M_k,\partial M_k\|.
	\]
\end{intro_thm}
\begin{rem*}
	The constant $C_n$ denotes the number of $n$-simplices in a suitable triangulation of an $n$-dimensional prism $\Delta^{n-1}\times [0,1]$ (Definition \ref{defn: constants C_n}). We refer the reader to Remark \ref{rem: recursive formula for C_n} for an explicit formula for $C_n$.
\end{rem*}
The various hypotheses of Theorem \ref{thm: superadditivity} are verified for example by boundary connected sums and in the case of 1-handle attachments.
\begin{intro_cor}
	\label{cor: boundary connected sums}
	Let $M_1$ and $M_2$ be triangulated $n$-manifolds with $\pi_1$-injective boundaries, $n \in \N_{\geq 4}$. Let $M$ denote the boundary connected sum of $M_1$ and $M_2$.
	Then 
	\[
	(2 C_n)^3 (n+2)^3\cdot \|M,\partial M \| \geq \|M_1,\partial M_1\| + \|M_2,\partial M_2\|.
	\]
\end{intro_cor}
\begin{intro_cor}
	\label{cor: 1-handle attachment}
	Let $N$ be a triangulated $n$-dimensional manifold with $\pi_1$-injective boundary, $n \in \N_{\geq 4}$. Let $M$ be the manifold obtained from $N$ by attaching a 1-handle $\mathbb{D}^1\times \mathbb{D}^{n-1}$ along some homeomorphism $\mathbb{S}^0\times \mathbb{D}^{n-1}\rightarrow \partial N$.
	Then
	\[
	(2 C_n)^3 (n+2)^3\cdot \|M, \partial M \| \geq \|N,\partial N\|.
	\]
\end{intro_cor}
Theorem \ref{thm: superadditivity} is deduced in Section \ref{sec: proof of superadditivity} from a more general gluing result, which is more technical (see Proposition \ref{prop: superadditivity}).
Moreover, it is not clear whether additivity could hold in the hypotheses of Theorem \ref{thm: superadditivity}.
However, for boundary connected sums with $n=3$ we do not expect more than superadditivity: the boundary connected sum of two solid tori is homeomorphic to a genus-2 handlebody. In this case, the glued manifold has positive simplicial volume \cite{Buc15}, while the solid tori have vanishing simplicial volume. 

When gluings are performed along entire boundary components and the manifolds involved are aspherical, we have the following additivity result, stated initially in \cite{Kue15} without any assumption on higher homotopy.
\begin{intro_thm}
	\label{thm: additivity full boundary components}
	Let $M_1,\dots, M_k$ be triangulated aspherical manifolds with $\pi_1$-injective aspherical boundary.
	Let $M$ be the manifold obtained by gluing $M_1,\dots, M_k$ along (some of) their boundary components. 
	If the glued boundary components have amenable fundamental group, then 
	\[
	\|M,\partial M\| = \|M_1,\partial M_1\| + \dots + \|M_k,\partial M_k\|.
	\]
\end{intro_thm}
\begin{rem*}
	The version of Gromov Additivity Theorem in \cite{BBF+14} does not cover the case in which there are non-amenable boundary components that remain unglued.
	Therefore, in the context of aspherical manifolds, Theorem \ref{thm: additivity full boundary components} is a stronger result.
	A similar situation arises when considering gluings along boundedly acyclic boundary components \cite[Remark 8.2]{LLM22}.
	However, extending our additivity result to the case of boundedly acyclic boundary components does not appear to be straightforward.
\end{rem*}
The aspherical case, albeit not general, is highly significant, for example, due to Gromov's conjecture, which states that, for closed oriented aspherical manifolds, the vanishing of the simplicial volume implies the vanishing of the Euler characteristic \cite[pag. 232]{Gro93}. The conjecture is still open and some approaches to the problem have been recently explored in \cite{LMR22}. For example, by studying the additivity properties of both the simplicial volume and the Euler characteristic, the authors consider an equivalent version of Gromov's conjecture for compact aspherical manifolds with $\pi_1$-injective aspherical boundary \cite[Question 2.13]{LMR22}.\\

The proof of our additivity results rely on the duality between simplicial volume and \emph{real} bounded cohomology.
Following Gromov's approach in the absolute case \cite{Gro82}, the first and fundamental step in this direction is to relate the relative bounded cohomology of a pair of spaces to the relative bounded cohomology of some pair of multicomplexes.

First of all, let us briefly recall Gromov's construction in the absolute case. 
Given a CW-complex $X$, we can construct the so-called \emph{singular multicomplex} $\K(X)$, whose simplices correspond to singular simplices of $X$ that are injective on the vertices, up to affine symmetries. 
There is a natural map $S\colon |\K(X)|\rightarrow X$, which is a homotopy equivalence \cite[Corollary 2.2]{FM23}.
The size of $\K(X)$ can be reduced without affecting its homotopy type: we can define inductively on the skeleta of $\K(X)$ a submulticomplex $\Elle(X)$ such that every simplex in $\K(X)$ is homotopic to a unique simplex of $\Elle(X)$ relative to the vertices.
The multicomplex $\Elle(X)$ is called the \emph{minimal} multicomplex associated to $X$.
Moreover, the restriction $S\colon |\Elle(X)|\rightarrow X$ of $S$ to $|\Elle(X)|$ is again a homotopy equivalence \cite[Corollary 3.27]{FM23}.
Finally we define $\A(X)$, the \emph{aspherical} multicomplex of $X$, to be the multicomplex obtained from $\Elle(X)$ by identifying simplices sharing the same 1-skeleton, and we denote by $\pi\colon \Elle(X)\rightarrow \A(X)$ the obvious quotient map. 
It turns out that the bounded cohomology of $X$ is isometrically isomorphic to the simplicial bounded cohomology of $\A(X)$ \cite[Corollary 4.24]{FM23}.

Some difficulties arise when dealing with pairs of spaces rather than a single one. 
In fact, given a pair of CW-complexes $(X,A)$, there is an obvious simplicial inclusion $\K(A)\subseteq \K(X)$. 
However, defining the minimal and aspherical multicomplexes for the pair $(\K(X),\K(A))$ becomes more complicated.
For example, the multicomplex $\Elle(A)$ is a submulticomplex of $\Elle(X)$ exactly when every connected component of $A$ is $\pi_n$-injective in $X$, for all $n \in \N_{\geq 1}$ (Proposition \ref{prop: when j_Elle is injective}).
Therefore, there is in general no obvious inclusion of $\Elle(A)$ into $\Elle(X)$.
This issue has been addressed by Kuessner in \cite{Kue15}, where, however, some additional conditions on higher homotopy are implicitly assumed (see Subsection \ref{subsec: role of higher homotopy} for a detailed discussion on this topic).

We say that a CW-pair $(X,A)$ is \emph{good} if $\pi_1(A\hookrightarrow X,x)$ is injective and $\pi_n(A\hookrightarrow X, x)$ is an isomorphism, for every $x \in A$ and $n\in \N_{\geq 2}$ (Definition \ref{defn: good pair}).
We do not assume $X$ or $A$ to be path-connected.
If the pair $(X,A)$ is good, the multicomplex $\A(A)$ can be realized as a submulticomplex of $\A(X)$.
By using the techniques developed in \cite{Fri22, FM23}, we show that the relative bounded cohomology of a good pair is \emph{isometrically} isomorphic to the simplicial bounded cohomology of a pair of aspherical multicomplexes.
\begin{intro_thm}
	\label{thm: relative mapping theorem, isometric version}
	Let $(X,A)$ be a good pair.
	Then, for every $n \in \N$, there is a canonical isometric isomorphism
	\[
	\Phi^n\colon H^n_b(\A(X),\A(A))\rightarrow H^n_b(X,A).
	\]
\end{intro_thm}
Our proof of Theorem \ref{thm: additivity full boundary components} is based on the following result on relative bounded cohomology.
\begin{intro_thm}
	\label{thm: relative bounded cohomology and amenable connected components}
	Let $(X,Y)$ be a good pair of topological spaces and let $A\subseteq Y$ be the union of some connected components of $Y$. Let $B = Y\setminus A$ be the union of the remaining connected components and let $j\colon (X,B)\rightarrow (X,Y)$ denote the inclusion. If every connected component of $A$ has an amenable fundamental group, then the map
	\[
	H^n_b(j)\colon H^n_b(X,Y)\rightarrow H^n_b(X,B)
	\]
	is an isometric isomorphism for every $n \in \N$.
\end{intro_thm}
Theorem \ref{thm: relative bounded cohomology and amenable connected components} is a natural generalization for good pairs of \cite[Theorem 2]{BBF+14} and \cite[Theorem 2.3]{KK15}, where the same result is achieved when $B=\emptyset$. 
This lead us to formulate a version of Gromov Equivalence Theorem (Theorem \ref{thm: equivalence theorem}) which plays an important role in our proof of Theorem \ref{thm: additivity full boundary components}.\\

By removing the assumptions on higher homotopy, we are only able to get a bi-Lipschitz control over the norms. 
In fact, there is a well-defined simplicial map $j_\Elle \colon \Elle(A)\rightarrow \Elle(X)$, which is neither injective nor surjective in general (Proposition \ref{prop: when j_Elle is injective}). 
We denote by $\A_X(A)$ the image of $\Elle(A)$ inside $\A(X)$ under the composition $\pi\circ j_\Elle\colon \Elle(A)\rightarrow \A(X)$.
If $A$ is $\pi_1$-injective in $X$, then $\A_X(A)$ is simplicially isomorphic to $\A(A)$ (Proposition \ref{prop: j_A is a simplicial embedding}).
\begin{intro_thm}
	\label{thm: relative mapping theorem, biLipschitz version}
	Let $(X,A)$ be a CW-pair such that the kernel of the morphism $\pi_1(A\hookrightarrow X, x)$ is amenable for every $x \in A$.
	Then, for every $n \in \N$, there is a constant $C_n\in \N_{\geq 1}$ (depending only on the degree) and a canonical bi-Lipschitz isomorphism of vector spaces
	\[
	\Psi^n\colon H^n_b(\A(X),\A_X(A))\rightarrow H^n_b(X,A),
	\]
	such that, for every $\alpha \in H^n_b(\A(X),\A_X(A))$,
	\[
	(n+2)^{-1}(2C_n)^{-2} \cdot \|\alpha \|_\infty \leq \| \Psi^n(\alpha)\|_\infty \leq 2C_n(n+2)\cdot \|\alpha\|_\infty.
	\]
\end{intro_thm}
The isomorphism $\Psi^n$ is constructed by using the machinery of mapping cones developed by Park in \cite{Par03}. We know that mapping cones do not give in general control over the norms \cite{FP12}. Therefore we are not able to show that $\Psi^n$ is isometric.
Moreover, the bi-Lipschitz constants in Theorem \ref{thm: relative mapping theorem, biLipschitz version} are far from being optimal (see Remark \ref{rem: not optimality of constant}).
The different control over the norms for good and non-good pairs translates in the different estimates for the simplicial volume in Theorem \ref{thm: superadditivity} and Theorem \ref{thm: additivity full boundary components}.
\begin{rem*}
	The fact that the bounded cohomology of a CW-complex is isometrically isomorphic to the bounded cohomology of the aspherical multicomplex $\A(X)$ ensures that the absolute bounded cohomology of a topological space depends only on its fundamental group. This suggests that the bounded cohomology (and consequently, via duality, the simplicial volume) is not overly sensitive to higher homotopy. However, all extensions of Gromov's results to the relative case face the fact that higher homotopy cannot truly be ignored if we want to keep control over the norms.
	In fact the notion of good pairs was already present in Frigerio and Pagliantini' s work \cite{FP12}, as well as in Blank's paper \cite{Bla16}, which exploit Ivanov's techniques.
	It is quite striking that very different techniques seem to require the same additional hypothesis on pair of spaces in order to deal with the relative case.
\end{rem*}
\begin{rem*}
	The notion of good pair is very well suited when gluing aspherical spaces. In fact, given a finite CW-complex $Z$, expressed as a union of two aspherical subcomplexes $X$ and $Y$, we have that $Z$ is aspherical if the pairs $(X,X\cap Y)$ and $(Y,X\cap Y)$ are good \cite[Theorem 3.1]{Edm20}. 
	Building on this fact, we deduce Theorem \ref{thm: additivity full boundary components} from a more general result regarding good pairs (Proposition \ref{prop: additivity full boundary components}).
\end{rem*}

\subsection*{Acknowledgments}
I am grateful to my Ph.D. supervisor Roberto Frigerio for his guidance in writing this paper. I would also like to thank Giuseppe Bargagnati, Federica Bertolotti and Francesco Milizia for useful discussions.

\subsection*{Plan of the paper}
We begin Section \ref{sec: preliminaries} by recalling basic notions on bounded cohomology and simplicial volume.
Then, we introduce pairs of multicomplexes and their homotopy theory, with a particular attention to the singular multicomplex. 
Finally, we introduce an important group action on aspherical multicomplexes.
In Section \ref{sec: bounded cohomology of pairs of multicomplexes} we set some straightforward generalizations of some results in \cite{FM23} in the relative setting.
In Section \ref{sec: pair of multicomplexes for pairs of spaces} we discuss the functorial properties of Gromov's construction of the multicomplexes $\Elle(X)$ and $\A(X)$.
In Section \ref{sec: proof of mapping theorem, isometric} we prove Theorem \ref{thm: relative mapping theorem, isometric version} and we discuss the relationship between our work and Kuessner's work on relative bounded cohomology via multicomplexes \cite{Kue15}.
The proof of Theorem \ref{thm: relative bounded cohomology and amenable connected components} is presented in Section \ref{sec: proof of theorem on amenable connected components}.
In Section \ref{sec: mapping cones} we introduce the technique of mapping cones, which is used in Section \ref{sec: proof of mapping theorem, bilipschitz} to prove Theorem \ref{thm: relative mapping theorem, biLipschitz version}.
In Section \ref{sec: regularity of simplicial actions} we introduce a regularity condition of simplicial actions on multicomplexes.
Section \ref{sec: proof of superadditivity} contains the proof of Theorem \ref{thm: superadditivity}.
In Section \ref{sec: gromov equivalence theorem} we establish a version of Gromov Equivalence Theorem for good pairs, which is used in Section \ref{sec: additivity} to prove Theorem \ref{thm: additivity full boundary components}.

\section{Preliminaries}
\label{sec: preliminaries}

\subsection{Relative bounded cohomology of spaces}
Let $X$ be a topological space. We denote by $(C_\bullet(X),\partial_\bullet)$ and $(C^\bullet(X),\delta^\bullet)$ the \emph{real} singular chain complex and singular cochain complex respectively.
For every $n\in \N_{\geq 0}$ we endow the space $C_n(X)$ with the $\ell^1$-norm 
\[
\bigl \lVert \sum a_i s_i \bigr\rVert_1 = \sum |a_i|.
\]
This norm induces a seminorm, still denoted by $\|\cdot \|_1$, on the singular homology with real coefficients $H_n(X)$.
On the other hand, we endow the space of cochains with the $\ell^\infty$-seminorm.
More precisely, for every $f\in C^n(X)$, we set
\[
\lVert f \rVert_\infty =
\sup \bigl\{ | f(\sigma) |, \; \sigma \mbox{ is a singular $n$-simplex}
\bigr\} \in [0,\infty].
\]
A singular cochain is called \emph{bounded} if $\|f\|_\infty<\infty$.
Since the differential takes bounded cochains to bounded cochains, we can consider the subcomplex of bounded cochains $C^\bullet_b(X)\subseteq C^\bullet(X)$.
The \emph{real bounded cohomology} $H^\bullet_b(X)$ of $X$ is the cohomology of the complex  $C^\bullet_b(X)$. Moreover, the $\ell^\infty$-norm on $C^\bullet_b(X)$ descends to a quotient seminorm on $H^\bullet_b(X)$.

Let $A$ be a subspace of $X$. We denote by $C^\bullet_b(X,A)\subseteq C^\bullet(X)$ the subspace of cochains that vanish on simplices supported on $A$ and we set $C^\bullet_b(X,A)=C^\bullet(X,A)\cap C^\bullet_b(X)$. We denote by $H^\bullet(X,A)$ (resp. $H^\bullet_b(X,A)$) the cohomology of the complex $C^\bullet(X,A)$ (resp. $C^\bullet_b(X,A)$).
We endow $C^\bullet_b(X,A)\subseteq C^\bullet_b(X)$ with the subspace norm, which, in turns, induces the quotient seminorm on $H^\bullet_b(X,A)$.
The well known long exact sequence of the pair for ordinary cohomology also holds for bounded cohomology: in fact the short exact sequence of complexes
\[
0\rightarrow C^\bullet_b(X,A)\rightarrow C^\bullet_b(X)\rightarrow C^\bullet_b(A)\rightarrow 0
\]
induces the long exact sequence
\[
\dots \rightarrow H^{n-1}_b(A)\rightarrow H^n_b(X,A)\rightarrow H^n_b(X)\rightarrow H^n_b(A)\rightarrow \dots
\]
\subsection{Simplicial volume}
\label{sec: simplicial volume}
We recall the definition of simplicial volume \cite{Gro82}.
\begin{defn}
	\label{defn: simplicial volume}
	Let $M$ be an oriented compact connected $n$-manifold (possibly with boundary). The	\emph{simplicial volume} of $M$ is defined as
	\[
	\lVert M, \partial M\rVert = \lVert [M,\partial M] \rVert_1\;,
	\]
	where $[M,\partial M] \in H_n(M,\partial M)\cong \R $ denotes the real (relative) fundamental class of $M$.
\end{defn}
The following classical result encodes the connection between bounded cohomology and simplicial volume.
\begin{prop}[{\cite[Proposition 7.10]{Fri17}}]
	\label{prop: duality principle}
	Let $M$ be an oriented compact connected $n$-manifold (possibly with boundary). Then
	\[
	\lVert M, \partial M\rVert = \max \bigl\{ \langle \phi_b, [M,\partial M] \rangle\;\vert \; \phi_b \in H^n_b(M,\partial M), \; \lVert \phi_b\rVert \leq 1 \bigr\},
	\]
	where $\left\langle \cdot , \cdot \right\rangle$ denotes the Kronecker product between (bounded) cohomology and homology.
\end{prop}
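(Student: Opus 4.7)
The plan is a standard Hahn--Banach style duality argument. The inequality $\geq$ is immediate: for any bounded cocycle $\phi\in C^n_b(M,\partial M)$ representing $\phi_b$ and any relative cycle $z\in C_n(M)$ (meaning $\partial z\in C_{n-1}(\partial M)$) representing $[M,\partial M]$, one has $|\phi(z)|\leq \|\phi\|_\infty\cdot\|z\|_1$. The pairing $\phi(z)$ is independent of the chosen representatives: perturbing $z$ by $\partial d + c'$ with $c'\in C_n(\partial M)$ contributes $\delta\phi(d)+\phi(c')=0$ using the cocycle condition and the fact that $\phi$ vanishes on $\partial$-supported simplices, while perturbing $\phi$ by $\delta\psi$ with $\psi\in C^{n-1}_b(M,\partial M)$ contributes $\psi(\partial z)=0$ since $\partial z\in C_{n-1}(\partial M)$. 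Taking infima over representatives therefore yields $\langle \phi_b,[M,\partial M]\rangle\leq \|\phi_b\|\cdot\|M,\partial M\|$, which is at most $\|M,\partial M\|$ when $\|\phi_b\|\leq 1$.

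For the reverse inequality, and simultaneously to show that the supremum is attained, I would construct an explicit extremal class. If $\|M,\partial M\|=0$ take $\phi_b=0$; otherwise, fix a relative cycle $z_0$ representing $[M,\partial M]$ and consider the subspace
\[
V := \partial C_{n+1}(M) + C_n(\partial M) + \R\, z_0 \;\subseteq\; C_n(M).
\]
Define $\ell\colon V\to \R$ by $\ell(\partial d+c'+tz_0) = t\,\|M,\partial M\|$. This is well-defined since $[z_0]=[M,\partial M]\neq 0$ in $H_n(M,\partial M)$, which gives $z_0\notin \partial C_{n+1}(M)+C_n(\partial M)$. Moreover $\ell$ has operator norm at most $1$ with respect to $\|\cdot\|_1$: for $t\neq 0$ the chain $z_0+b/t$ is another representative of $[M,\partial M]$, so $\|b+tz_0\|_1=|t|\cdot\|z_0+b/t\|_1\geq |t|\cdot\|M,\partial M\|=|\ell(b+tz_0)|$.

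By Hahn--Banach, $\ell$ extends to a linear functional $\hat\ell\in C^n(M)$ with $\|\hat\ell\|_\infty\leq 1$. The whole point of the design of $V$ is that $\ell$, and hence $\hat\ell$, vanishes on both $C_n(\partial M)$ and $\partial C_{n+1}(M)$: the first condition places $\hat\ell$ in the relative complex $C^n_b(M,\partial M)$, while the second makes it a cocycle. Setting $\phi_b:=[\hat\ell]\in H^n_b(M,\partial M)$ then gives $\|\phi_b\|\leq 1$ together with $\langle \phi_b,[M,\partial M]\rangle = \hat\ell(z_0) = \ell(z_0) = \|M,\partial M\|$, so the supremum is attained and the stated maximum equals $\|M,\partial M\|$. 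I do not anticipate a real obstacle here: the only genuine subtlety is the choice of $V$, engineered so that the Hahn--Banach extension automatically lies in the relative complex and is a cocycle.
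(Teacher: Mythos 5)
The paper states this duality principle without proof, citing Frigerio's book; your Hahn--Banach argument is precisely the standard proof given there, and it is correct as written (the easy inequality, the well-definedness of the pairing, the choice of $V$ forcing the extension to be a relative cocycle, and the norm estimate via rescaling representatives are all handled properly).
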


\subsection{Pairs of multicomplexes}
The original definition of multicomplexes, given by Gromov in \cite{Gro82}, is the following: a multicomplex is ``a set $K$ divided into the union of closed affine simplices $\Delta_i$, $i \in I$, such that the intersection of any two simplices $\Delta_i \cap \Delta_j$ is a (simplicial) subcomplex in $\Delta_i$ as well as in $\Delta_j$''.
In other words, a \emph{multicomplex} can be defined as a $\Delta$-complex that is both regular and unordered (see \cite[page 533]{Hat}) i.e. a $\Delta$-complex where the simplices have unordered and distinct vertices, and it is possible for multiple simplices to share the same set of vertices.
Simplicial complexes are in particular multicomplexes.
We refer to \cite{FM23} for a more abstract-algebraic definition of multicomplexes.

The \emph{geometric realization $|K|$} of a multicomplex $K$ is constructed in the very same way as the geometric realization of $\Delta$-complexes. As a topological space, it inherits the weak topology from its decomposition into simplices.
For every $n \in \N$, we denote by $K^n$ the $n$-skeleton of $K$.

A \emph{pair of multicomplexes} is a pair $(K,L)$, where $L\subseteq K$ is a submulticomplex of $K$. A simplicial map between pairs of multicomplexes $f \colon (K,L)\rightarrow (K',L')$ is given by a simplicial map $f\colon K\rightarrow K'$ such that $f(L)\subseteq L'$. We denote by $\Aut(K,L)$ the group of simplicial automorphisms of the pair $(K,L)$.
A simplicial map $f\colon K \rightarrow K'$ between multicomplexes is \emph{non-degenerate} if it maps every $n$-simplex of $K$ to a $n$-simplex of $K'$.

Let $I=[0,1]$ and let $\sigma_n$ denote the standard simplex $\Delta^n$, endowed with its natural structure of simplicial complex (hence multicomplex).
We define a structure of multicomplex on $\sigma_n\times I$ in the following way.
We proceed by induction, for $n=0$ we just take $\sigma_0\times I$ as the first barycentric subdivision of $I$.
Assume that $\sigma_{n-1}\times I$ has been triangulated in such a way that $\sigma_{n-1}\times \{0\}$ and $\sigma_{n-1}\times \{1\}$ appear as simplices in the multicomplex structure of $\sigma_{n-1}\times I$. In order to triangulate $\sigma_n\times I$, we consider the following triangulation of the geometric boundary of $\sigma_n\times I$.
We triangulate $\partial(\sigma_n\times I)$ by merging $\sigma_n\times \{0\}$, $\sigma_n\times \{1\}$ and the triangulations of $\tau\times I\cong \sigma_{n-1}\times I$, where $\tau$ varies among all the facets of $\sigma_n$. In conclusion, the triangulation of $\sigma_n\times I$ is obtained by coning the triangulation of $\partial (\sigma_n\times I)$ over an internal point.
The number of simplices needed to triangulate $\sigma_{n-1}\times I$ plays an important role in this work.
\begin{defn}
	\label{defn: constants C_n}
	Let $n \in \N_{\geq 1}$ and $I = [0,1]$. Let $\sigma_{n}$ denote the standard simplex $\Delta^n$.
	The constant $C_n$ denotes the number of $n$-simplices in $\sigma_{n-1}\times I$, endowed with its natural structure of multicomplex.
\end{defn}
\begin{rem}
	\label{rem: recursive formula for C_n}
	From the construction above it is easy to deduce the following recursive formula for $C_n$:
	\[
	\begin{cases}
		C_1 = 2, \\
		C_{n}=2 + n \cdot C_{n-1},
	\end{cases}
	\]
	from which we obtain the closed formula:
	\[
	C_n=2\cdot \sum_{k=1}^{n}\frac{n!}{k!}.
	\]
\end{rem}
The multicomplex $K\times I$ is the multicomplex obtained by gluing a copy of the multicomplex $\sigma\times I$ for every simplex $\sigma$ of $K$, according to the boundary maps \cite[Definition 3.12]{FM23}. 
If $f,g\colon (K,L)\rightarrow (K',L')$ are simplicial maps between pairs of multicomplexes, we say that \emph{$f$ is simplicially homotopic to $g$ (as a map of pairs)} if there exists a simplicial map $F\colon K\times I \rightarrow K'$ such that $F\circ i_0 = f$, $F\circ i_1 = g$ and $F(L\times I)\subseteq L'$, where $i_j\colon K\rightarrow K\times \{j\}\subseteq K\times I$ is the obvious inclusion. 

Given a group $G$, a simplicial group action $G\acts (K,L)$ is given by a group homomorphism $G\rightarrow \Aut(K,L)$. The notion of quotient multicomplex $K/G$ is not well defined in general \cite[Remark 1.15]{FM23}. However, if a simplicial action $G\acts (K,L)$ is \emph{$0$-trivial}, i.e. if $G$ acts trivially on the 0-skeleton of $K$, then we have a well defined pair of multicomplexes $(K/G,L/G)$ \cite[Proposition 1.14]{FM23}. 

\subsection{Bounded cohomology of multicomplexes}
Let $K$ be a multicomplex. An \emph{algebraic $n$-simplex} of $K$ is a pair
\[
\sigma = (\Delta,(v_0,\dots,v_n)),
\]
where $\Delta$ is a $k$-simplex of $K$, and $\{v_0,\dots,v_n\}$ coincides with the set of vertices of $\Delta$. Since $\Delta$ has exactly $k+1$ vertices, we have that $k\leq n$. There is no requirement for the elements of the ordered $(n+1)$-tuple $(v_0,\dots,v_n)$ to be pairwise distinct. For every $i\in \{0,\dots,n\}$, the $i$-th face of $\sigma$ is the algebraic $(n-1)$-simplex defined by
\[
\partial_n^i\sigma = (\Delta',(v_0,\dots, \hat{v}_i,\dots, v_n)),
\]
where $\Delta'=\Delta$, if $\{v_0,\dots, v_n\} = \{v_0,\dots, \hat{v}_i,\dots, v_n\}$, and $\Delta'$ is the unique face of $\Delta$ with vertices $\{v_0,\dots, \hat{v}_i,\dots, v_n\}$, otherwise.
We denote by $C_n(K)$ the real vector space generated by the set of algebraic $n$-simplices.
Of course, $C_\bullet(K)$ is a chain complex with the following boundary operator
\[
\partial_n\colon C_n(K)\rightarrow C_{n-1}(K), \qquad \partial_n = \sum_{i=0}^{n} (-1)^i\partial_n^i.
\] 
Let $C^\bullet(K)$ (resp. $C^\bullet_b(K)$) be the complex of (bounded) real simplicial cochains on $K$, and let $H^\bullet(K)$ (resp. $H^\bullet_b(K)$) be the corresponding cohomology. As in the singular case, we may endow $C^\bullet_b(K)$ with the $\ell^\infty$-norm, which descends to a seminorm on $H^\bullet_b(K)$.

A cochain $f \in C^n(K)$ is called \emph{alternating} if, for every algebraic simplex $(\Delta, (v_0,\dots, v_n))$ and every permutation $\tau$ of $\{0,\dots, n\}$, we have that 
\[
f(\Delta, (v_0,\dots, v_n)) = \text{sign}(\tau) \cdot f(\Delta, (v_{\tau(0)},\dots, v_{\tau(n)})).
\]
The differential preserves alternating cochains, and therefore we can consider the complex $C^\bullet_b(K)_\alt = C^\bullet_b(K)\cap C^\bullet(K)_\alt$ of alternating bounded cochains.
Since the inclusion $C^\bullet_b(K)_\alt\subseteq C^\bullet_b(K)$ induces an isometric isomorphism in cohomology \cite[Theorem 1.9]{FM23}, every cohomology class in $H^\bullet_b(K)$ may be represented by an alternating cocycle.

If $L$ is a subcomplex of $K$, then the inclusion $C_\bullet(L)\subseteq C_\bullet(K)$ induces the following short exact sequence of complexes
\[
0\rightarrow C^\bullet_b(K,L)\rightarrow C^\bullet_b(K)\rightarrow C^\bullet_b(L)\rightarrow 0.
\]
We denote by $H^\bullet_b(K,L)$ the cohomology of the complex $C^\bullet_b(K,L)$. As in the singular case, $C^\bullet_b(K,L)$ is endowed with the subspace norm from $C^\bullet_b(K)$, which, in turns, induces the quotient seminorm on $H^\bullet_b(K,L)$.
As in the absolute case, every cohomology class in $H^\bullet_b(K,L)$ may be represented by an alternating cocycle.

\begin{rem}
	\label{rem: algebraic homotopy is bounded}
	Is is observed in \cite{FM23} that simplicially homotopic maps are algebraically homotopic via a homotopy which is bounded in every degree. 
	More specifically, for every pair $f_0,f_1\colon L\rightarrow K$ of simplicially homotopic maps between multicomplexes, the induced maps $f_i^\bullet\colon C^\bullet_b(K)\rightarrow C^\bullet_b(L)$, $i \in \{0,1\}$, on bounded cochains are algebraically homotopic via a homotopy \[T^\bullet \colon C^\bullet_b(K)\rightarrow C^{\bullet-1}_b(L)\] such that $\|T^n\|\leq C_n$, for every $n \in \N_{\geq 1}$ \cite[Remark 3.15]{FM23}. 
	Here $C_n$ denotes the number of $n$-simplices in the natural multicomplex structure of the $n$-dimensional prism $\Delta^{n-1}\times I$ (see Definition \ref{defn: constants C_n}).
	In fact, such a homotopy can be constructed by sending each simplex $\sigma$ of $L$ to the sum (with signs) of the simplices triangulating $\sigma\times I$.
\end{rem}

\subsection{The singular multicomplex}
\label{subsec: the singular multicomplex}
In the theory of multicomplexes, the singular multicomplex plays the same role played by the singular simplicial set in the theory of simplicial sets \cite{Mil57,May92}. 
Given a topological space $X$, the \emph{singular multicomplex} $\K(X)$ of $X$ is the multicomplex having as simplices the singular simplices of $X$ with distinct vertices, up to affine symmetries \cite{Gro82} (hence, its simplices do not come with a preferred ordering of the vertices).
The geometric realization of $\K(X)$ is a CW-complex whose 0-skeleton is in bijection with the space $X$ itself. Moreover, there is an obvious map $S_X\colon |\K(X)|\rightarrow X$, which, under the assumption that $X$ is a CW-complex, is a homotopy equivalence \cite[Corollary 2.2]{FM23}.

If $(X,A)$ is a pair of topological spaces, we have an obvious simplicial inclusion $\K(A)\subseteq \K(X)$. Moreover the map $S_X$ induces a well defined map of pairs $S_X\colon(|\K(X)|,|\K(A)|)\rightarrow (X,A)$.
The following result is a straightforward generalization of \cite[Corollary 2.2]{FM23} to pairs of topological spaces. Recall that two continuous maps of pairs $f,g\colon (X,A)\rightarrow (Y,B)$ are \emph{homotopic as maps of pairs} if there exists a continuous homotopy $F\colon X \times [0,1]\rightarrow Y$ such that $F(A\times [0,1])\subseteq B$. A map of pairs is a \emph{homotopy equivalence of pairs} if it has a homotopy inverse as a map of pairs.
\begin{prop}
	\label{prop: S_X:K(X) to X induces homotopy equivalence of pairs}
	Let $(X,A)$ be a CW-pair. Then the natural projection
	$S_X\colon(|\K(X)|,|\K(A)|)\rightarrow (X,A)$
	is a homotopy equivalence of pairs.
\end{prop}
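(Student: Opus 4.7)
The plan is to reduce to the absolute case (\cite[Corollary 2.2]{FM23}) and then invoke a standard pair-version of Whitehead's theorem for CW-pairs.

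First, I would check that $(|\K(X)|,|\K(A)|)$ is itself a CW-pair. The geometric realization of a multicomplex is by construction a CW-complex whose cells are the open simplices, and the geometric realization of the submulticomplex inclusion $\K(A)\subseteq\K(X)$ is an inclusion of a subcomplex. In particular, $|\K(A)|\hookrightarrow|\K(X)|$ is a cofibration, and by hypothesis so is $A\hookrightarrow X$.

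Second, I would apply the absolute statement \cite[Corollary 2.2]{FM23} twice: once to $X$ to conclude that $S_X\colon|\K(X)|\to X$ is a homotopy equivalence, and once to $A$ to conclude that its restriction $S_A\colon|\K(A)|\to A$ is a homotopy equivalence (note that by construction the restriction of $S_X$ to $|\K(A)|$ coincides with $S_A$, since a singular simplex of $X$ supported in $A$ is the same data as a singular simplex of $A$).

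Third, I would invoke the following standard fact from homotopy theory: if $f\colon(Y,B)\to(X,A)$ is a map of pairs such that $B\hookrightarrow Y$ and $A\hookrightarrow X$ are cofibrations, and both $f\colon Y\to X$ and $f|_B\colon B\to A$ are ordinary homotopy equivalences, then $f$ is a homotopy equivalence of pairs. This is a consequence of the gluing lemma together with the homotopy extension property (see, e.g., \cite[Proposition 0.19]{hatcherAT} or tom Dieck). Applying this to $S_X$ yields the claim.

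The only genuinely delicate point is the last step, which needs the HEP to upgrade absolute homotopy inverses and homotopies to maps and homotopies of pairs. In the present setting this is painless because both pairs are CW-pairs, so the inclusions are honest cofibrations and the gluing lemma applies directly; hence, apart from invoking this general principle, there is no new content beyond the absolute case already established in \cite{FM23}.
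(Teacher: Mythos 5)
Your proposal is correct and follows essentially the same route as the paper: both observe that $S_X$ restricts to $S_A$ on $|\K(A)|$, apply the absolute statement \cite[Corollary 2.2]{FM23} to each of $X$ and $A$, and then upgrade to a homotopy equivalence of pairs using that both inclusions are cofibrations (the paper cites \cite[7.4.2]{brown2006topology} for exactly the general principle you invoke). No substantive difference.
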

\begin{proof}
	Since ${S_X}|_{A} = S_A$, we have that the horizontal arrows of the following diagram are homotopy equivalences
	\[
	\begin{tikzcd}
		{|\K(X)|} \arrow[r, "S_X"]                 & X                 \\
		{|\K(A)|} \arrow[r, "S_A"] \arrow[u, hook] & A. \arrow[u, hook]
	\end{tikzcd}
	\]
	Since $(|\K(X)|,|\K(A)|)$ and $(X,A)$ are CW-pairs, then both vertical arrows are cofibrations and we can deduce the statement from \cite[7.4.2]{Bro06}.
\end{proof}
\begin{rem}
	\label{rem: on the importance of pairs}
	We would like to emphasize the role of homotopy equivalence \emph{of pairs} in our context.
	Let $f\colon (X,A)\rightarrow (Y,B)$ be a map of pairs. Requiring $f$ to be a homotopy equivalence of pairs is of course more restrictive than requiring both $f$ and $f_{|A}$ to be homotopy equivalences.
	However, if $f$ is a homotopy equivalence of pairs, it is evident that $f$ induces an \emph{isometric} isomorphism $H^n_b(f)\colon H^n_b(Y,B)\rightarrow H^n_b(X,A)$ for every $n \in \N$.
	On the contrary, if both $f$ and $f_{|A}$ are homotopy equivalences, it follows from the Five Lemma that $H^n_b(f)$ is an isomorphism, but it is not clear how to show it is isometric. 
\end{rem}

\subsection{Complete, minimal and aspherical multicomplexes}
In order to get a better understanding of the homotopy of multicomplexes, one considers \emph{complete multicomplexes}, which correspond to Kan complexes in the theory of simplicial sets \cite{Mil57}. A multicomplex $K$ is complete if every continuous map $f\colon \Delta^n\rightarrow |K|$, whose restriction to the boundary $f|_{\partial \Delta^n}\colon \partial \Delta^n\rightarrow |K|$ is a simplicial embedding, is homotopic in $|K|$ relative to $\partial \Delta^n$ to a simplicial embedding $f'\colon \Delta^n\rightarrow K$ (here $\Delta^n$ is equipped with its natural structure of multicomplex).
For example, for every CW-complex $X$, the singular multicomplex $\K(X)$ is complete \cite[Theorem 3.7]{FM23}.

Let $K$ be a multicomplex. We say that two $n$-simplices $\Delta_1$ and $\Delta_2$ of $K$ are \emph{compatible} if they share the same boundary (in the topological realization of $K$).
For every simplex $\Delta$ of $K$, we denote by $\pi_K(\Delta)$ the set of simplices of $K$ that are compatible with $\Delta$.

The submulticomplex of $K$ generated by the union of two compatible $n$-simplices is naturally homeomorphic to a $n$-sphere, and therefore, for every pair $\Delta_1$ and $\Delta_2$ of compatible $n$-simplices of $K$, we are able to identify a pointed continuous map 
\[
\dot{S}^n(\Delta_1,\Delta_2)\colon (S^n, s_0)\rightarrow (|K|,x_0),
\]
where $s_0$ is a chosen point of $S^n$ which is mapped to a vertex $x_0$ of $\Delta_1$ (or equivalently of $\Delta_2$). We refer the reader to \cite[Section 3.2]{FM23} for more details about this construction.
Two compatible $n$-simplices $\Delta_1$ and $\Delta_2$ of $K$ are \emph{homotopic in $K$} if the corresponding characteristic maps $i_j\colon\Delta^n\rightarrow |K|$, $j\in\{1,2\}$, are homotopic in $|K|$ relative to $\partial \Delta^n$.
A multicomplex is called \emph{minimal} if it does not contain any pair of homotopic simplices.

The advantage of dealing with complete and minimal multicomplexes is that we can describe their homotopy groups in a simplicial way.
\begin{prop}[{\cite[Theorem 3.9 and Proposition 3.22]{FM23}}]
	\label{prop: homotopy groups of complete multicomplexes}
	Let $K$ be a complete multicomplex, and let $\Delta_0$ be an $n$-simplex of $K$. Suppose that $x_0$ is a fixed vertex of $\Delta_0$. Then the map
	\[
	\Theta \colon \pi_K(\Delta_0)\rightarrow \pi_n(|K|,x_0), \qquad \Theta(\Delta) = [\dot{S}^n(\Delta_0, \Delta)],
	\]
	is surjective, and $\Theta(\Delta)=\Theta(\Delta')$ if and only if $\Delta$ and $\Delta'$ are homotopic.
	
	In particular, if $K$ is complete and minimal, the map $\Theta$ is bijective.
\end{prop}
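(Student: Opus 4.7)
The plan is to establish surjectivity first, then the ``same image iff homotopic'' characterization of fibers, and finally derive the bijectivity in the minimal case as an immediate corollary.

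For surjectivity, given $\alpha \in \pi_n(|K|, x_0)$ represented by a pointed map $f \colon (S^n, s_0) \to (|K|, x_0)$, I would decompose $S^n = D_+ \cup_\partial D_-$ as two $n$-disks glued along a common equator, with $s_0$ on the equator, and identify $D_+$ with the standard simplex $\Delta^n$ so that $s_0$ corresponds to the vertex of $\Delta_0$ mapped to $x_0$. The first goal is to homotope $f$ rel basepoint so that $f|_{D_+}$ equals the characteristic map $c_{\Delta_0}$ of $\Delta_0$. This is possible because $\Delta_0 \subseteq |K|$ is contractible rel any of its vertices, so $c_{\Delta_0}$ and the constant map at $x_0$ on $D_+$ are homotopic through maps landing in $\Delta_0$; this homotopy can then be used to replace $f$ on a small disk neighborhood of $s_0$ in $D_+$ by $c_{\Delta_0}$, with the stretching absorbed on $D_-$. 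After this normalization, $f|_\partial$ is a simplicial embedding of $\partial \Delta^n$ into $K$ realizing $\partial \Delta_0$, so $f|_{D_-} \colon \Delta^n \to |K|$ is a continuous map whose restriction to $\partial \Delta^n$ is simplicial. Completeness of $K$ then yields a homotopy rel $\partial \Delta^n$ of $f|_{D_-}$ to a simplicial embedding realizing some $n$-simplex $\Delta$ of $K$; by construction $\partial \Delta = \partial \Delta_0$, so $\Delta \in \pi_K(\Delta_0)$, and $f$ has been homotoped into $\dot{S}^n(\Delta_0, \Delta)$, showing $\Theta(\Delta) = \alpha$.

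For the fiber characterization, the easy direction is immediate: a rel-$\partial$ homotopy between $\Delta$ and $\Delta'$ glues with the constant homotopy of $c_{\Delta_0}$ on $D_+$ to produce a pointed homotopy $\dot{S}^n(\Delta_0, \Delta) \simeq \dot{S}^n(\Delta_0, \Delta')$. Conversely, given a pointed homotopy $H \colon S^n \times [0,1] \to |K|$ between the two, I would normalize $H$ so that $H|_{D_+ \times [0,1]}$ factors through the contractible subspace $\Delta_0 \subseteq |K|$ and $H|_{\partial \times [0,1]}$ is constant at the characteristic map of $\partial \Delta_0$. Once this is achieved, $H|_{D_- \times [0,1]}$ becomes the desired rel-$\partial$ homotopy between the characteristic maps of $\Delta$ and $\Delta'$. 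The normalization uses contractibility of $\Delta_0$ together with completeness of $K$ in dimension $n+1$, applied to the disk $H|_{D_+ \times [0,1]}$ which already has a simplicial structure on much of its boundary: one pushes its image into $\Delta_0$ and straightens. The minimal case is then immediate, since no two distinct $n$-simplices of a minimal multicomplex are homotopic, whence $\Theta$ is also injective.

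The main obstacle is the converse direction of the fiber characterization: extracting a rel-boundary homotopy between $\Delta$ and $\Delta'$ from a free pointed homotopy of the associated spheres $\dot{S}^n(\Delta_0, \Delta)$ and $\dot{S}^n(\Delta_0, \Delta')$ requires carefully freezing the behavior on the $D_+$-hemisphere and on the equator while preserving the homotopy on $D_-$. This is a higher-dimensional analogue of the normalization used for surjectivity, and its success depends crucially on both the completeness of $K$ and the contractibility of $\Delta_0$ as a subspace of $|K|$.
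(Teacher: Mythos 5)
The paper itself offers no proof of this statement --- it is imported verbatim from \cite[Theorem 3.9 and Proposition 3.22]{FM23} --- so I can only assess your argument on its own terms. Your surjectivity argument is the standard one and is sound: normalize $f$ by a pointed homotopy so that it equals $c_{\Delta_0}$ on $D_+$, then apply completeness to $f|_{D_-}$, whose restriction to the boundary is now the simplicial embedding $c_{\Delta_0}|_{\partial\Delta^n}$. The easy direction of the fiber characterization and the deduction of bijectivity from minimality are also fine.

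The gap is in the converse direction, exactly where you locate the ``main obstacle''. Completeness cannot be applied to $H|_{D_+\times[0,1]}$: it requires the restriction of the map to the boundary sphere of the $(n+1)$-disk to be a simplicial \emph{embedding}, whereas $H$ restricted to $\partial(D_+\times[0,1])$ sends both $D_+\times\{0\}$ and $D_+\times\{1\}$ onto the same simplex $\Delta_0$ (and is unconstrained on $\partial D_+\times[0,1]$), so it is nowhere near injective; nor is there any reason the image of $H|_{D_+\times[0,1]}$ can be pushed into $\Delta_0$ relative to its boundary. Fortunately, the normalization you want holds for purely formal reasons and needs neither completeness nor the contractibility of $\Delta_0$: the subcomplex $A=D_+\times\{0,1\}\cup\{s_0\}\times[0,1]$ is contractible, hence a strong deformation retract of the ball $D_+\times[0,1]$, and $H|_{D_+\times[0,1]}$ agrees on $A$ with the stationary map $c_{\Delta_0}\circ\mathrm{pr}$; therefore the two are homotopic rel $A$, and the homotopy extension property for the CW-pair $(S^n\times[0,1],\,D_+\times[0,1]\cup S^n\times\{0,1\})$ upgrades this to a replacement of $H$ by a homotopy that is stationary on all of $D_+$, whose restriction to $D_-\times[0,1]$ is the desired homotopy from $c_\Delta$ to $c_{\Delta'}$ rel $\partial\Delta^n$. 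Equivalently, one can argue via additivity of difference classes: $[\dot{S}^n(\Delta,\Delta')]=-[\dot{S}^n(\Delta_0,\Delta)]+[\dot{S}^n(\Delta_0,\Delta')]=0$, and a pointed nullhomotopy of $\dot{S}^n(\Delta,\Delta')$, read as a map on $D^n\times[0,1]$, is precisely a rel-$\partial$ homotopy between the two halves. In particular this direction of the statement holds in an arbitrary space; only surjectivity uses completeness.
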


To every complete and minimal multicomplex $K$ there is associated a complete, minimal and \emph{aspherical} multicomplex $A$ which is defined as follows. The 1-skeleton of $A$ coincides with the 1-skeleton of $K$ (in particular, they have the same set of vertices). For every $n\in \N_{\geq 2}$, the set of $n$-simplices of $A$ is given by the equivalence classes of $n$-simplices of $K$, where two $n$-simplices are equivalent if and only if they share the same 1-skeleton. It turns out that $A$ is complete, minimal and (the topological realization of) $A$ is indeed aspherical \cite[Theorem 3.31]{FM23}. One can define a simplicial projection $\pi\colon K\rightarrow A$ that restricts to the identity of $K^1=A^1$, and induces an isomorphism on fundamental groups \cite[Theorem 3.31]{FM23}. We call $A$ the \emph{aspherical quotient} of $K$.

\subsection{Minimal and aspherical multicomplexes of spaces}
\label{subsection: minimal and aspherical multicomplexes of spaces}
Let $X$ be a topological space and consider its singular multicomplex $\K(X)$.
The size of $\K(X)$ can be reduced without changing its homotopy type. 
We define a submulticomplex $\Elle(X)$ of $\K(X)$ in the following way: the 0-skeleton of $\Elle(X)$ coincides with the one of $\K(X)$ (hence with the set $X$); once the $n$-skeleton $\Elle(X)^n$ of $\Elle(X)$ is defined, we define the ($n$+1)-skeleton by adding to $\Elle(X)^n$ one ($n$+1)-simplex for each homotopy class of ($n$+1)-simplices of $\K(X)$ whose facets are contained in $\Elle(X)^n$.
By construction, the multicomplex $\Elle(X)$ is complete and minimal.
Moreover, one can define a simplicial retraction $\K(X)\rightarrow \Elle(X)$ whose topological realization is a strong deformation retraction \cite[Theorem 3.23]{FM23}. It follows that the inclusion $|\Elle(X)|\hookrightarrow|\K(X)|$ is a homotopy equivalence.
We denote by $\A(X)$ the aspherical quotient of $\Elle(X)$, so that its topological realization $|\A(X)|$ of $\A(X)$ is a model for the classifying space of the fundamental group of $X$.
A fundamental result of Gromov \cite[Section 3.3]{Gro82} \cite[Corollary 4.24]{FM23} affirms that the bounded cohomology of $X$ is isometrically isomorphic to the simplicial bounded cohomology of $\A(X)$. The functorial properties of this construction are given in Section \ref{sec: pair of multicomplexes for pairs of spaces}.

\subsection{The group $\Pi(X,X)$}
\label{subsec: the group Pi(X,X)}
Let $X$ be a topological space. First of all, we recall the definition of the group $\Pi(X,X)$, introduced by Gromov in \cite{Gro82}.
\begin{defn}
	\label{defn: group Pi(X,X)}
	Let $X_0$ be a subset of $X$.
	Let $\Omega(X,X_0)$ be the set whose elements are families of paths $\{\gamma_x\}_{x \in X_0}$ satisfying the following conditions:
	\begin{itemize}
		\item[(1)] for every $x \in X_0$, $\gamma_x\colon [0,1]\rightarrow X$ is a continuous path such that $\gamma_x(0)=x$ and $\gamma_x(1)\in X_0$;
		\item[(2)] the path $\gamma_x$ is constant for all but finitely many $x \in X_0$;
		\item[(3)] the map
		\[
		X_0\rightarrow X_0, \qquad x\mapsto\gamma_x(1),
		\]
		is a bijection (hence a permutation of $X$ with finite support).
	\end{itemize}
	Two elements $\{\gamma_x\}_{x \in X_0}$ and $\{\gamma'_x\}_{x \in X_0}$ of $\Omega(X,X_0)$ are said to be \emph{homotopic} if $\gamma_x$ is homotopic to $\gamma_x'$ in $X$ relative to the endpoints, for every $x \in X_0$.
	We denote by $\Pi(X,X_0)$ the set of homotopy classes of elements of $\Omega(X,X_0)$. This is a group with respect to the usual concatenation of paths.
\end{defn}
We usually denote elements of $\Pi(X,X)$ just by specifying the list of homotopically non-trivial paths $\{\gamma_1,\dots,\gamma_n\}$ in one of its representatives.
If $X_0=\{x_0\}$ consists of a single point, then $\Pi(X,X_0)=\pi_1(X,x_0)$ is the fundamental group of $X$ at the basepoint $x_0$. In general, there is an injective group homomorphism
\[
\bigoplus_{x \in X_0} \pi_1(X, x)\hookrightarrow \Pi(X,X_0).
\]
For every subset $A$ of $X$, the inclusion $A\hookrightarrow X$ induces an obvious group homomorphism
\[
\Pi(A,A)\rightarrow \Pi(X,X).
\]
We denote by $\Pi_X(A)$ the image of this homomorphism in $\Pi(X,X)$.

A subset $A$ of a topological space $X$ is called \emph{amenable} (in $X$) if, for every $x \in A$, the image of $\pi_1(A\hookrightarrow X,x)$ is an amenable subgroup of $\pi_1(X,x)$. The set $A$ is not assumed to be path-connected. 
\begin{lemma}[{\cite[Lemma 6.6]{FM23}, \cite[Lemma 4]{Kue15}}]
	\label{lem: Pi(A,A) is amenable}
	Let $(X,A)$ be a pair of topological spaces. If $A$ is amenable in $X$, then the subgroup $\Pi_X(A)\leq\Pi(X,X)$ is amenable.
\end{lemma}
If $A$ is $\pi_1$-injective in $X$, then $\Pi_X(A)$ is isomorphic to $\Pi(A,A)$.
Moreover, in this case, $A$ is an amenable subset of $X$ if and only if every connected component of $A$ has an amenable fundamental group.

\subsection{The action of $\Pi(X,X)$ on $\A(X)$}
\label{subsec: the action of Pi(X,X) on A(X)}
We define a simplicial action of $\Pi(X,X)$ on $\A(X)$ as follows.
Let $g\in \Pi(X,X)$ and let $\{\gamma_x\}_{x \in X}$ be a representative of $g$. 
On the 0-skeleton, we simply define the action of $g$ as the permutation (with finite support) induced by $g$ on $X=\A(X)^0$.
Let $e$ be a 1-simplex of $\A(X)$ with endpoints $v_0,v_1\in \A(X)^0=X$.
Recall that 1-simplices of $\Elle(X)$ (hence, of $\A(X)$) bijectively correspond to homotopy classes (relative to endpoints) of paths in $X$ with distinct endpoints. Therefore, we can take a representative $\gamma_e \colon [0,1] \rightarrow X$ of $e$ with endpoints $v_0$ and $v_1$ and we define $g\cdot e$ as the homotopy class (relative to endpoints) of the following concatenation of paths
\[
\bar{\gamma}_{v_0} * \gamma_e * \gamma_{v_1},
\]
where $\bar{\gamma}$ is the path $\bar{\gamma}(t)=\gamma(1-t)$.
In \cite[Section 5.2]{FM23} it is showed that we can extend uniquely $g$ to a simplicial automorphisms of the whole $\A(X)$, denoted by $\psi(g)$, which is simplicially homotopic to the identity \cite[Theorem 5.3]{FM23}.
In this way we have an action $\psi \colon \Pi(X,X)\rightarrow \Aut(\A(X))$.

\subsection{Universal covering of multicomplexes}
\label{subsection: universal covering of multicomplexes}

Let $K$ be a connected multicomplex and let $\widetilde{|K|}\rightarrow |K|$ denote the universal covering map.
Then $\widetilde{|K|}$ has a unique multicomplex structure that makes the covering map simplicial. 
We denote by $\widetilde{K}$ the corresponding multicomplex and by \[p\colon \widetilde{K} \rightarrow K\] the simplicial projection.
The universal covering of an aspherical multicomplex is clearly aspherical. 
Also completeness and minimality are inherited by universal coverings.
\begin{lemma}
	\label{lem: universal covering is complete and minimal}
	If $K$ is complete and minimal, then also $\widetilde{K}$ is complete and minimal.
\end{lemma}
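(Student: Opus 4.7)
The plan is to deduce both completeness and minimality of $\widetilde{K}$ from the corresponding properties of $K$, exploiting the fact that $\pi\colon \widetilde{K}\to K$ is a simplicial map whose topological realization is a covering map, hence in particular a local homeomorphism.

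For completeness, I would start with a continuous map $f\colon \Delta^n\to |\widetilde{K}|$ such that $f|_{\partial\Delta^n}$ is a simplicial embedding. The first task is to check that $\pi\circ f|_{\partial\Delta^n}$ is again a simplicial embedding into $|K|$. For any two distinct vertices $v,w$ of $\partial\Delta^n$, the image under $f$ of the edge joining them is a $1$-simplex of $\widetilde{K}$ with distinct endpoints; projecting via $\pi$ yields a $1$-simplex of $K$ whose endpoints, being distinct by the multicomplex axiom, force $\pi f(v)\neq \pi f(w)$. In particular the $n+1$ vertices of $\partial\Delta^n$ map to pairwise distinct vertices of $|K|$, so the $(n-1)$-faces of $\partial\Delta^n$ map to pairwise distinct $(n-1)$-simplices of $K$; combined with the fact that $\pi$ is a homeomorphism on each simplex, this gives the required embedding. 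Completeness of $K$ then produces a homotopy $H\colon \Delta^n\times [0,1]\to |K|$ rel $\partial\Delta^n$ from $\pi\circ f$ to a simplicial embedding $g\colon \Delta^n\to |K|$. Using the homotopy lifting property of the covering $\pi$ with initial lift $f$, I obtain a lift $\widetilde{H}$ of $H$ starting at $f$. Since $H$ is constant in $t$ on $\partial\Delta^n$, uniqueness of path lifting forces $\widetilde{H}$ to be constant in $t$ on $\partial\Delta^n$ as well, so $\widetilde{H}$ is a homotopy rel $\partial\Delta^n$. Finally $\widetilde{H}_1$ is a lift of the characteristic map $g$ of an $n$-simplex $\Delta$ of $K$; by connectedness of $\Delta^n$ it lands in one component of $\pi^{-1}(\Delta)$, which is a single simplex $\widetilde{\Delta}$ of $\widetilde{K}$, and so $\widetilde{H}_1$ is a simplicial embedding.

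For minimality I would argue by contradiction: assume there exist distinct compatible $n$-simplices $\widetilde{\Delta}_1\neq \widetilde{\Delta}_2$ of $\widetilde{K}$, homotopic rel boundary. Post-composing the homotopy with $\pi$ shows that the images $\Delta_i:=\pi(\widetilde{\Delta}_i)$ are compatible and homotopic rel boundary in $K$. If $\Delta_1\neq \Delta_2$, this contradicts the minimality of $K$. Otherwise $\Delta_1=\Delta_2=:\Delta$, and $\widetilde{\Delta}_1, \widetilde{\Delta}_2$ are two distinct lifts of $\Delta$ with identical boundaries in $|\widetilde{K}|$. Pick any $(n-1)$-face $\widetilde{F}$ of this common boundary and an interior point $p\in \widetilde{F}$; on a small evenly covered neighborhood of $\pi(p)$, the map $\pi$ is a homeomorphism, so there is a unique lift of $\Delta$ having $\widetilde{F}$ as a face, forcing $\widetilde{\Delta}_1=\widetilde{\Delta}_2$, a contradiction.

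The main subtle point is the verification that projection preserves the simplicial-embedding property on $\partial\Delta^n$: a priori distinct vertices of $\widetilde{K}$ might collapse under $\pi$, and it is precisely the multicomplex axiom requiring $1$-simplices to have distinct endpoints, together with the observation that any two vertices of $\partial\Delta^n$ are joined by an edge, that rules this out. Once this is established, both halves of the lemma are quite direct consequences of the homotopy lifting property and the uniqueness of lifts through a given point.
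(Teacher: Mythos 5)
Your proof is correct and follows essentially the same route as the paper's: project to $K$, invoke completeness and minimality there, and lift back via the homotopy lifting and unique lifting properties of the covering. Your explicit verification that $\pi\circ f|_{\partial\Delta^n}$ remains a simplicial embedding (via the distinct-endpoints axiom for $1$-simplices) is a detail the paper's proof asserts without argument, and your separate contradiction argument for minimality is just an unpacking of the paper's appeal to uniqueness of lifts.
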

\begin{proof}
	Let $f \colon \Delta^n \rightarrow |\tilde{K}|$ be a continuous map such that $f|_{\partial \Delta^n}$ is a simplicial embedding. 
	Then also the map $|p|\circ f\colon \Delta^n \rightarrow |K|$ restricts to a simplicial embedding on $\partial \Delta^n$. 
	Therefore, by completeness and minimality of $K$, there exists a unique simplex $\sigma \in K$ such that $|p|\circ f$ is homotopic in $|K|$ relative to $\partial \Delta^n$ to a characteristic map of $\sigma$. 
	By the homotopy lifting property of covering spaces, it follows that there exists a simplex $\tilde{\sigma}\in \tilde{K}$ such that $f$ is homotopic in $|\tilde{K}|$ relative to $\partial \Delta^n$ to a characteristic map of $\tilde{\sigma}$. By the unique lifting property, the simplex is also unique, and therefore $\tilde{K}$ is complete and minimal.
\end{proof}
The following observation is implicit in \cite[Section 1.7]{Kue15}.
\begin{lemma}
	\label{lem: uniqueness of simplices in universal covering}
	Let $K$ be a connected, complete and minimal multicomplex.
	Let $v,w \in \tilde{K}^0$ be distinct vertices of $\tilde{K}$. Then there exists a unique edge $e_{vw}\in \widetilde{K}^1$ with endpoints $v$ and $w$.
\end{lemma}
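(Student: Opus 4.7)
The plan is to prove existence using the completeness of $\tilde{K}$ and uniqueness by combining its minimality with the simple connectedness of $|\tilde{K}|$. Both ingredients are at hand: Lemma \ref{lem: universal covering is complete and minimal} guarantees that $\tilde{K}$ is complete and minimal, while $|\tilde{K}|$ is simply connected by construction.

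For existence, I would first note that $|\tilde{K}|$ is path-connected (being the universal cover of the connected space $|K|$). Fix a continuous path $\gamma\colon \Delta^1 \to |\tilde{K}|$ with $\gamma(0)=v$ and $\gamma(1)=w$. Since $v\neq w$, the restriction $\gamma|_{\partial \Delta^1}$ is a simplicial embedding of the $0$-skeleton of $\Delta^1$ into $\tilde{K}^0$. Applying the completeness of $\tilde{K}$ then yields a simplicial embedding $\gamma'\colon \Delta^1 \to |\tilde{K}|$ that is homotopic to $\gamma$ relative to $\partial \Delta^1$; the image of $\gamma'$ is an edge of $\tilde{K}$ with endpoints $v$ and $w$.

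For uniqueness, suppose $e_1, e_2 \in \tilde{K}^1$ are two edges with endpoints $\{v,w\}$. These edges are compatible, since they share the same boundary $\{v,w\}$. Let $i_1, i_2\colon \Delta^1 \to |\tilde{K}|$ be characteristic maps of $e_1$ and $e_2$. The concatenation $i_1 \cdot \bar{i}_2$ defines a loop based at $v$ in $|\tilde{K}|$, which is null-homotopic because $|\tilde{K}|$ is simply connected. A standard rearrangement of this null-homotopy produces a homotopy between $i_1$ and $i_2$ relative to $\partial \Delta^1$, meaning that $e_1$ and $e_2$ are homotopic in the sense of the paper. The minimality of $\tilde{K}$ then forces $e_1=e_2$.

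I do not anticipate any serious obstacle: the main subtlety is simply to convert the null-homotopy of $i_1\cdot \bar{i}_2$ into a homotopy rel $\partial \Delta^1$ between $i_1$ and $i_2$, which is a standard manipulation in algebraic topology. The entire argument rests squarely on the two features of $\tilde{K}$ established in Lemma \ref{lem: universal covering is complete and minimal} together with the simple connectedness of the universal cover, and is accordingly rather short.
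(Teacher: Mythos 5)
Your proposal is correct and follows essentially the same route as the paper: existence of the edge via path-connectedness and completeness of $\tilde{K}$, and uniqueness by noting that simple connectedness of $|\tilde{K}|$ leaves only one homotopy class of paths from $v$ to $w$ rel endpoints, so minimality forces any two such edges to coincide. The paper's proof is just a more condensed version of the same argument, likewise resting on Lemma \ref{lem: universal covering is complete and minimal}.
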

\begin{proof}
	Since $|\tilde{K}|$ is connected, there exists a path $\gamma\colon [0,1]\rightarrow |\tilde{K}|$ between $v$ and $w$. By completeness and minimality of $\tilde{K}$, there exists a unique edge $e_{vw}$ of $\tilde{K}$ that is homotopic to $\gamma$ relative to the endpoints. Since $\tilde{K}$ is simply connected, there is only one homotopy class of paths in $|\tilde{K}|$ joining $v$ and $w$. Hence, by minimality, $e_{vw}$ is the unique edge joining $v$ and $w$.
\end{proof}
We denote by $\pi_1(K, v) = \pi_1(|K|,v)$ the fundamental group of $|K|$ with respect to some basepoint $v\in K^0$.
The action of $\pi_1(K,v)$ on $|\widetilde{K}|$ by deck transformations induces a simplicial action $\pi_1(K,v)\acts \tilde{K}$.

\section{Bounded cohomology of pairs of multicomplexes}
\label{sec: bounded cohomology of pairs of multicomplexes}
In this section we state and prove some straightforward generalizations of some results from \cite{FM23}. 
Due to the reliance of our arguments on the actual proofs in \cite{FM23}, this section is far from self contained and we recommend the reader has \cite{FM23} at hand while reading it.

\subsection{Simplicial Approximation for pairs}
\label{subsec: simplicial approximation for pairs}
In the context of simplicial sets, it is well known that continuous maps can be assumed to be simplicial, up to suitably subdividing the domain. 
Provided that the target is complete, there is a stronger result that provides a controlled simplicial approximation for maps between multicomplexes. The following is the natural generalization of \cite[Proposition 3.11]{FM23} to pairs of multicomplexes.
\begin{prop}[Simplicial Approximation for pairs]
	\label{proposition: simplicial approximation for pairs}
	Let $(K,K_0)$ and $(L,L_0)$ be pairs of multicomplexes and assume that both $K$ and $K_0$ are complete.
	Let $f: (|L|,|L_0|)\rightarrow (|K|,|K_0|)$ be a continuous map that is simplicial on the 0-skeleton of $L$. Assume that, for every simplex $\sigma$ of $L$, $f$ is injective on the vertices of $\sigma$, and that $f$ is simplicial on a submulticomplex $L_1$ of $L$. Then, there exists a non-degenerate simplicial map $f'\colon (L,L_0)\rightarrow (K,K_0)$ such that $|f'|$ is homotopic to $f$ (as maps of pairs) relative to $V(L)\cup L_1$.
\end{prop}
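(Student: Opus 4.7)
My plan is to adapt the inductive proof of the absolute version \cite[Proposition 3.11]{FM23} to the relative setting, performing an induction on the skeleta of $L$ while carrying out extra bookkeeping to ensure that simplices of $L_0$ are mapped into $K_0$ and that the connecting homotopy stays inside $|K_0|$ over $|L_0|$. The hypothesis that $K_0$ is complete (and not just $K$) is precisely what makes the argument work inside the subcomplex.

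Base case. On $L^0 \cup L_1$ the map $f$ is already simplicial, so I set $f'=f$ there and take the starting homotopy $H$ to be the constant homotopy; this is automatically relative to $V(L)\cup L_1$. Since $f(|L_0|)\subseteq|K_0|$ and $f|_{L_1}$ is simplicial, every simplex of $L_0\cap L_1$ is sent to a simplex of $K$ whose realization lies in $|K_0|$, hence to a simplex of $K_0$, so the pair structure holds at this stage.

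Inductive step. Suppose I have built a non-degenerate simplicial map of pairs $f'\colon (L^{n-1}\cup L_1,(L^{n-1}\cup L_1)\cap L_0)\to (K,K_0)$ together with a homotopy $H\colon |L^{n-1}\cup L_1|\times [0,1]\to |K|$ from $f|_{L^{n-1}\cup L_1}$ to $|f'|$, relative to $V(L)\cup L_1$, with $H(|(L^{n-1}\cup L_1)\cap L_0|\times[0,1])\subseteq|K_0|$. For each $n$-simplex $\sigma$ of $L$ not contained in $L_1$, I first use the homotopy extension property of the cofibration $\partial\sigma\hookrightarrow\sigma$ to extend $H|_{\partial\sigma\times[0,1]}$ to a homotopy of $f|_\sigma$ ending at a continuous map $\widetilde{f}_\sigma\colon\sigma\to|K|$ whose restriction to $\partial\sigma$ is simplicial and still injective on vertices; when $\sigma\subseteq L_0$, the very same extension can be performed with target $|K_0|$, since both $f(\sigma)$ and $H(\partial\sigma\times[0,1])$ already lie in $|K_0|$. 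Next I apply completeness: using completeness of $K$ (respectively of $K_0$, when $\sigma\subseteq L_0$) I obtain a simplicial embedding $\sigma\to K$ (respectively $\sigma\to K_0$) homotopic to $\widetilde{f}_\sigma$ relative to $\partial\sigma$. Setting $f'|_\sigma$ to be this simplex and concatenating the two homotopies extends $H$ over $\sigma\times[0,1]$. Because the interiors of the $n$-simplices of $L\setminus L_1$ are pairwise disjoint, this construction can be carried out independently on each such $\sigma$, yielding the desired map on $L^n\cup L_1$ and closing the induction.

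The main obstacle I foresee is the simultaneous control of the image simplex and of the connecting homotopy inside $K_0$ whenever $\sigma\subseteq L_0$: completeness of $K$ alone would produce a simplicial filling in $K$ whose image need not lie in $K_0$, so completeness of $K_0$ is essential in order to iterate the argument. The complementary requirement of keeping the final homotopy relative to $V(L)\cup L_1$ is handled automatically by starting from the constant homotopy on $L_1$ and modifying $H$ only on the interiors of simplices not in $L_1$.
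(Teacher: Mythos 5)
Your proposal is correct and follows essentially the same route as the paper's proof: induction on the skeleta of $L$, leaving $L_1$ untouched, invoking completeness of $K_0$ to fill simplices of $L_0$ inside $K_0$ and completeness of $K$ for the remaining simplices, and using the homotopy extension property to assemble the homotopy relative to $V(L)\cup L_1$. The only difference is cosmetic bookkeeping of the connecting homotopy, which the paper handles in the same way.
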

\begin{proof}
	By induction on the skeleta of $L$, it is sufficient to define, for every $n\in \N$, continuous maps $f'_n\colon (|L|,|L_0|)\rightarrow (|K|,|K_0|)$ such that
	\begin{itemize}
		\item[(1)] $f_n'$ is simplicial on $|L|^n$;
		\item[(2)] $f_n'$ is homotopic to $f_{n-1}'$ relative to $L^{n-1}\cup L_1$ (as maps of pairs).
	\end{itemize}
	We set $f_0'=f$ and we construct $f_{n+1}'$, assuming that $f_n'$ as above is given. 
	First of all, we set $f_{n+1}' = f_n'$ on $|L|^n$. We extend $f'_{n+1}$ to the $(n+1)$-skeleton in the following way. Let $\sigma$ be a $(n+1)$-simplex of $L$. If $\sigma \in L_1$, we define $f'_{n+1}(\sigma)= f(\sigma)$. If $\sigma \in L_0$, we denote by $\alpha \colon \Delta^{n+1}\rightarrow |L_0|$ its characteristic map. 
	Since $f_n'\circ \alpha|_{\partial \Delta^n}$ is a simplicial embedding into $|K_0|$, by the completeness of $K_0$, there exists a simplex $\sigma'$ of $K_0$ such that $f_{n+1}'\circ \alpha$ is homotopic (in $|K_0|$) to the characteristic map of $\sigma'$. 
	Therefore, we define $f_{n+1}'(\sigma)=\sigma'$. 
	In the same way, by making use of the completeness of $K$, we can extend $f_{n+1}'$ to $L^{n+1}$.
	Then, using the homotopy extension property for CW pairs, we then extend $f_{n+1}'$ to the whole $|L|$ in such a way that conditions (1) and (2) above hold.
\end{proof}
Another feature of completeness is that it allows to turn continuous homotopies into simplicial ones. 
The following is a generalization of \cite[Lemma 3.17]{FM23}. 
A multicomplex $K$ is called \emph{large} if every connected component of $K$ contains infinitely many vertices.
\begin{prop}[Homotopy Lemma for pairs]
	\label{proposition: homotopy lemma for pairs}
	Let $(K,K_0)$ and $(L,L_0)$ be pairs of multicomplexes and assume that both $K$ and $K_0$ are large and complete.
	Let $f,g \colon (L,L_0)\rightarrow (K,K_0)$ be non-degenerate simplicial maps such that $|f|$ is homotopic to $|g|$ (as maps of pairs). Then $f$ is simplicially homotopic to $g$ (as maps of pairs) via a non-degenerate simplicial map.
\end{prop}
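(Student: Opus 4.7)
The plan is to generalize the absolute case \cite[Lemma~3.17]{FM23} by applying the Simplicial Approximation for pairs (Proposition~\ref{proposition: simplicial approximation for pairs}) to a continuous homotopy between $|f|$ and $|g|$, taking as the already-simplicial sub-locus the two ``ends'' of the cylinder.

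Fix a continuous homotopy of pairs $H\colon |L|\times[0,1]\to|K|$ with $H(\cdot,0)=|f|$, $H(\cdot,1)=|g|$ and $H(|L_0|\times[0,1])\subseteq|K_0|$. View $L\times[0,1]$ as a multicomplex (\cite[Definition~3.12]{FM23}) with submulticomplex $L_0\times[0,1]$, and identify its realization with $|L|\times[0,1]$. Then $H$ is simplicial on the submulticomplex $L_1:=(L\times\{0\})\cup(L\times\{1\})$, where it coincides with $f\sqcup g$, and in particular it is simplicial on the $0$-skeleton. My goal is to arrange, without breaking these features, that $H$ is also injective on the vertices of every simplex of $L\times[0,1]$; once this is done, Proposition~\ref{proposition: simplicial approximation for pairs} produces a non-degenerate simplicial map of pairs $F\colon(L\times[0,1],L_0\times[0,1])\to(K,K_0)$ with $|F|$ homotopic to $H$ relative to $L_1$, and the relative-to-$L_1$ condition forces $F\circ i_0=f$ and $F\circ i_1=g$.

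The obstruction to the injectivity hypothesis is that a prism of $L\times[0,1]$ over a simplex $v_0,\dots,v_n$ of $L$ has vertices $(v_0,0),\dots,(v_i,0),(v_i,1),\dots,(v_n,1)$; within each of the two layers injectivity is automatic from the non-degeneracy of $f$ and $g$, but cross-layer coincidences of the form $f(v_j)=g(v_k)$ may persist. The crucial step, and the main obstacle, is to remove these while respecting the pair structure. I would subdivide the time direction into $N$ equal pieces for $N$ large, obtaining a refined cylinder with intermediate layers $L\times\{t_1\},\dots,L\times\{t_{N-1}\}$. Exploiting the largeness of $K$, I assign to each new vertex $(v,t_j)$ a \emph{fresh} target vertex of $K$; when $v\in L_0$ I choose the target inside $K_0$, using the largeness of $K_0$ to guarantee enough fresh choices there. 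An induction on the skeleta of $L$, coupled with the finiteness of the list of vertices already appearing in each prism, keeps all the chosen targets pairwise distinct within each prism of the refined cylinder. Finally I deform $H$ inside narrow horizontal slabs around each intermediate layer so as to send $(v,t_j)$ to its assigned vertex, moving through paths that remain inside $|K_0|$ whenever they lie above $|L_0|$ and that do not touch $t=0$ or $t=1$. The resulting map $H'$ is homotopic to $H$ as maps of pairs relative to $L_1$, and is now injective on the vertices of every prism of the refined cylinder.

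Applying Proposition~\ref{proposition: simplicial approximation for pairs} to $H'$, with $L_1$ equal to the union of the two extreme layers, yields a non-degenerate simplicial map $F$ of the refined cylinder into $(K,K_0)$ which restricts to $f$ and $g$ on the ends. Viewing the refined cylinder as $N$ stacked copies of $L\times[0,1]$, this is a chain of $N$ non-degenerate simplicial homotopies of pairs through $L\times[0,1]$; composing them by the standard transitivity of simplicial homotopy (which holds in this setting because $K$ and $K_0$ are complete) gives the single non-degenerate simplicial homotopy of pairs asked for. The hard part is the fresh-vertex bookkeeping in the previous paragraph: one must ensure \emph{simultaneously} that targets above $|L_0|$ stay in $K_0$, that the connecting homotopy does not leave $|K_0|$ above $|L_0|$, and that all cross-layer coincidences disappear. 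This is exactly where the largeness assumptions on both $K$ and $K_0$ are used.
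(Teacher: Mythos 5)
Your proposal is correct and follows essentially the same route as the paper, which simply defers to the absolute argument of \cite[Lemma 3.17]{FM23}: perturb the homotopy using largeness of $K$ (and, in the relative setting, of $K_0$) to make it vertex-injective on each prism, apply simplicial approximation relative to the two ends, and concatenate. Your write-up just makes explicit the relative bookkeeping (fresh vertices chosen in $K_0$ above $L_0$, deformations staying in $|K_0|$) that the paper leaves to the reader.
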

\begin{proof}
	The argument in \cite{FM23} can be easily adapted to this case.
\end{proof}
\begin{prop}
	\label{prop: homotopy equiv between minimal multicomplexes is simplicial isomorphism}
	Let $(K,K_0)$ and $(L,L_0)$ be pairs of multicomplexes. Assume that $K,K_0,L,L_0$ are minimal and complete.
	Let $g \colon (K,K_0)\rightarrow (L,L_0)$ be a simplicial map of pairs which is bijective on the 0-skeletons.
	If the geometric realization $|g| \colon (|K|,|K_0|)\rightarrow (|L|,|L_0|)$ is a homotopy equivalence (as map of pairs), then $g$ is a simplicial isomorphism of pairs.
\end{prop}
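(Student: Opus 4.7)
The strategy is to construct a simplicial homotopy inverse of $g$ via the tools developed earlier in this section, and then use minimality to force both compositions to coincide with the identity on the nose. Let $h\colon(|L|,|L_0|)\to(|K|,|K_0|)$ be a homotopy inverse of $|g|$ as a map of pairs. Using the bijectivity of $g$ on the $0$-skeletons and the homotopy extension property for the CW-pairs $(|L|,L^0)$ and $(|L_0|,L_0^0)$, we may deform $h$ through a homotopy of pairs so that $h$ sends every vertex of $L$ (respectively of $L_0$) to its unique $g$-preimage in $K^0$ (respectively in $K_0^0$). Applying Proposition \ref{proposition: simplicial approximation for pairs} with $L_1=L^0$ yields a non-degenerate simplicial map of pairs $h'\colon(L,L_0)\to(K,K_0)$ with $|h'|$ homotopic to $h$ relative to $L^0$ as a map of pairs; in particular $|h'|$ remains a homotopy inverse of $|g|$ and $h'=g^{-1}$ on the $0$-skeletons.

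The compositions $h'\circ g$ and $g\circ h'$ are now non-degenerate simplicial self-maps of pairs whose realizations are homotopic (as maps of pairs) to the respective identities, so Proposition \ref{proposition: homotopy lemma for pairs} produces simplicial homotopies of pairs $h'\circ g\simeq\id_K$ and $g\circ h'\simeq\id_L$.

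It then suffices to prove the following general fact: any non-degenerate simplicial self-map $\phi\colon K\to K$ of a complete minimal multicomplex which is the identity on $K^0$ and is simplicially homotopic to $\id_K$ must equal $\id_K$. The argument proceeds by induction on the skeleta. If $\phi=\id$ on $K^{n-1}$, then for every $n$-simplex $\Delta$ the image $\phi(\Delta)$ is compatible with $\Delta$. Using the simplicial homotopy and the fact that $\phi$ fixes $\partial\Delta$, one shows that $\Theta(\phi(\Delta))=\Theta(\Delta)$ in $\pi_n(|K|)$; by the bijectivity of $\Theta$ in Proposition \ref{prop: homotopy groups of complete multicomplexes} combined with the minimality of $K$, this forces $\phi(\Delta)=\Delta$. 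Applying this principle to $h'\circ g$ on $K$, to $g\circ h'$ on $L$, and to their restrictions between the complete minimal multicomplexes $K_0$ and $L_0$, we conclude that $g$ is a simplicial isomorphism of pairs.

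The main obstacle is the inductive step above: converting a global simplicial homotopy $\phi\simeq\id_K$ into the vanishing of the $\pi_n$-class of the sphere $\dot S^n(\Delta,\phi(\Delta))$. This requires a careful analysis of the restriction of the homotopy to the prism over $\Delta$ when the two base maps agree on $\partial\Delta$, exploiting the interplay between simplicial and topological homotopy made available by Proposition \ref{prop: homotopy groups of complete multicomplexes}.
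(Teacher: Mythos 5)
Your reduction to the ``general fact'' in the third paragraph is where the argument breaks: that fact is false. A non-degenerate simplicial self-map of a complete minimal multicomplex which fixes $K^0$ and is simplicially homotopic to $\id_K$ need not be $\id_K$. Two families of counterexamples live in this very paper. First, any non-trivial element of $\Gamma_1(\Elle(X))$ for $X$ with $\pi_2(X)\neq 0$ fixes the whole $1$-skeleton, is topologically (hence, by the Homotopy Lemma, simplicially) homotopic to the identity, yet moves some $2$-simplex to a distinct compatible one differing by a non-trivial class in $\pi_2$ --- indeed, the aspherical quotient $\A(X)=\Elle(X)/\Gamma_1$ is a genuinely non-trivial quotient. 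Second, for $X$ with $\pi_1\neq 1$, the automorphism $\psi(g)$ associated to a single non-trivial loop $\gamma$ at a vertex $v$ fixes all vertices and sends an edge $e$ issuing from $v$ to $\bar\gamma * e$, a different edge by minimality; it is still freely homotopic (hence simplicially homotopic) to the identity whenever it acts by an inner automorphism on $\pi_1$. Your inductive step fails precisely because the tracks of the homotopy on the vertices (resp.\ the difference classes in $\pi_n$) obstruct the conclusion $\Theta(\phi(\Delta))=\Theta(\Delta)$: a simplicial homotopy $\phi\simeq\id$ does not restrict to a homotopy relative to $\partial\Delta^n$ on the prism over $\Delta$. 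Consequently you cannot force $h'\circ g=\id_K$; even when $g$ is an isomorphism, the simplicially approximated inverse $h'$ may differ from $g^{-1}$ by such an automorphism. (A smaller issue: your use of the Homotopy Lemma for pairs also requires $K$ and $K_0$ to be large, which is not among the stated hypotheses.)

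The paper avoids all of this by not attempting to show that any composite equals the identity: it simply applies the absolute statement \cite[Proposition 3.24]{FM23} twice, first to $g|_{K_0}\colon K_0\to L_0$ (a homotopy equivalence of pairs restricts to a homotopy equivalence of the subspaces) and then to $g\colon K\to L$, and observes that the inverse is automatically a map of pairs. The proof of that absolute statement proceeds by induction on skeleta, showing directly that $g$ is a bijection on $n$-simplices: for a fixed $n$-simplex $\Delta_0$, the map $g\colon\pi_K(\Delta_0)\to\pi_L(g(\Delta_0))$ is identified, via the bijections $\Theta$ of Proposition \ref{prop: homotopy groups of complete multicomplexes}, with the bijection $|g|_*\colon\pi_n(|K|)\to\pi_n(|L|)$. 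Because one compares $K$ with $L$ through $g$ against a fixed reference simplex, rather than a self-map against the identity, the conjugation and difference-class phenomena above never arise. If you want to keep your overall strategy, you must replace the third paragraph with this kind of direct bijectivity argument.
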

\begin{proof}
	One needs to apply \cite[Proposition 3.24]{FM23} twice. First one deduces that $g|_{K_0}\colon K_0\rightarrow L_0$ is a simplicial isomorphism. Then one deduces the same for $g\colon K\rightarrow L$. It follows that the inverse of $g$ is automatically a map of pairs.
\end{proof}

\subsection{Relative Isometry Lemma}
\label{subsec: relative isometry lemma}
Let $K$ be a multicomplex. The natural chain inclusions
\[
\phi_\bullet \colon C_\bullet(K)\rightarrow C_\bullet(|K|)
\]
induce maps $H_\bullet(\phi_\bullet)\colon H_\bullet(K)\rightarrow H_\bullet (|K|)$ and $H^\bullet(\phi_\bullet)\colon H^\bullet(|K|)\rightarrow H^\bullet(K)$ which are isomorphisms in every degree \cite[Theorem 1.11]{FM23}. This result cannot be true in general for bounded cohomology \cite[pag. 25]{FM23}. 
The situation is better for complete multicomplexes: indeed Gromov's Isometry Lemma \cite[pag. 43]{Gro82} states that the bounded cohomology of a large and complete multicomplex is isometrically isomorphic to the bounded cohomology of its geometric realization.
Given a pair of multicomplexes $(K,L)$, we can consider the following commutative diagram
\begin{equation}
	\label{eq: isometry lemma}
	\begin{tikzcd}
		0 \arrow[r] & {C_b^\bullet(|K|,|L|)} \arrow[r] \arrow[d, "{\phi^\bullet_{K,L}}"] & C_b^\bullet(|K|) \arrow[r] \arrow[d, "\phi^\bullet_K"] & C_b^\bullet(|L|) \arrow[r] \arrow[d, "\phi^\bullet_L"] & 0 \\
		0 \arrow[r] & {C_b^\bullet(K,L)} \arrow[r]                                       & C_b^\bullet(K) \arrow[r]                               & C_b^\bullet(L) \arrow[r]                               & 0,
	\end{tikzcd}
\end{equation}
where the vertical arrows are induced by the chain inclusions above (hence, they are norm non-increasing).

\begin{prop}[Relative Isometry Lemma, {\cite[Proposition 1]{Kue15}}]
	\label{proposition: relative isometry lemma}
	Let $(K,L)$ be a pair of multicomplexes and assume that both $K$ and $L$ are large and complete. Then, the map
	\[
	H^n_b(\phi_{K,L}^\bullet) \colon H^n_b(|K|,|L|) \rightarrow H^n_b(K,L)
	\]
	is an isometric isomorphism for every $n\in \N$.
\end{prop}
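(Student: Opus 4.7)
The plan is to promote the absolute Isometry Lemma (Gromov's result, already available for large and complete multicomplexes such as $K$ and $L$) to the relative statement by combining the long exact sequence for the pair with a norm-controlled chain-level section of $\phi_\bullet$.

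First I would establish that $H^n_b(\phi^\bullet_{K,L})$ is a vector-space isomorphism by applying the five lemma to the commutative ladder of long exact sequences in bounded cohomology induced by the morphism of short exact sequences (2.1):
\[
\begin{tikzcd}[column sep=small]
H^{n-1}_b(|K|) \arrow[r] \arrow[d, "\cong"] & H^{n-1}_b(|L|) \arrow[r] \arrow[d, "\cong"] & H^n_b(|K|,|L|) \arrow[r] \arrow[d] & H^n_b(|K|) \arrow[r] \arrow[d, "\cong"] & H^n_b(|L|) \arrow[d, "\cong"] \\
H^{n-1}_b(K) \arrow[r] & H^{n-1}_b(L) \arrow[r] & H^n_b(K,L) \arrow[r] & H^n_b(K) \arrow[r] & H^n_b(L)
\end{tikzcd}
\]
Here the four outer vertical arrows are isometric isomorphisms by the absolute Isometry Lemma, so the middle arrow is automatically a bijection.

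For the seminorms, the inequality $\|H^n_b(\phi^\bullet_{K,L})(\beta)\| \leq \|\beta\|$ is immediate since $\phi^\bullet_{K,L}$ is a restriction of cochains. For the reverse inequality I would imitate the proof of the absolute case by constructing a norm-non-increasing chain-level section $\psi_\bullet \colon C_\bullet(|K|) \to C_\bullet(K)$ of $\phi_\bullet$, i.e.\ with $\psi_\bullet \circ \phi_\bullet = \id$ and $\phi_\bullet \circ \psi_\bullet$ chain-homotopic to the identity through a norm-bounded homotopy. Such a $\psi_\bullet$ is built by straightening every singular simplex of $|K|$ to a simplex of $K$, inductively on skeleta, using the completeness of $K$ at each step. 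The crucial point for the relative statement is to arrange the induction so that a singular simplex whose image lies in $|L|$ is straightened to a chain in $L$; largeness and completeness of $L$ guarantee that at each stage there is enough room to choose a straightening inside $L$ compatible with the already-straightened faces. The resulting $\psi_\bullet$ is a chain map of pairs, and dualizing produces a norm-non-increasing cochain map
\[
\psi^\bullet \colon C_b^\bullet(K,L) \to C_b^\bullet(|K|,|L|)
\]
with $\phi^\bullet_{K,L} \circ \psi^\bullet = \id$ and $\psi^\bullet \circ \phi^\bullet_{K,L}$ bounded-homotopic to the identity. At the level of bounded cohomology, $H^n_b(\psi^\bullet)$ is then a norm-non-increasing two-sided inverse of $H^n_b(\phi^\bullet_{K,L})$, which, combined with the opposite inequality, yields the isometric isomorphism.

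The main obstacle is precisely the construction of the relative straightening $\psi_\bullet$: one must perform the inductive choices witnessing completeness of $K$ in such a way that source simplices factoring through $|L|$ are straightened inside $L$, without breaking the compatibility on common faces between simplices of $|L|$ and simplices of $|K| \setminus |L|$. This is the step that forces the hypothesis that both $K$ and $L$ (and not just $K$) be large and complete. Once $\psi_\bullet$ is available as a map of pairs, the remainder of the argument is the formal dualization of the standard Gromov--Ivanov machinery.
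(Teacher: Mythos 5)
Your proposal is correct and follows essentially the same route as the paper: a Five Lemma argument on the ladder of long exact sequences gives the isomorphism, and isometry is obtained by producing a norm non-increasing inverse at the (co)chain level whose only new feature over the absolute Isometry Lemma is \emph{naturality with respect to the inclusion} $L\subseteq K$ --- i.e.\ that simplices supported in $|L|$ are straightened into $L$ --- which is exactly where the paper concentrates its effort and where completeness and largeness of $L$ enter. The only difference is one of packaging: rather than a literal chain-level section $\psi_\bullet\colon C_\bullet(|K|)\to C_\bullet(K)$ with $\psi_\bullet\circ\phi_\bullet=\id$, the paper reuses the construction of \cite{FM23}, a partial cochain map $C^\bullet_b(K)\to C^\bullet_b(|K|)$ factoring through an auxiliary submulticomplex $\tilde K_N\subseteq\K(|K|\times\Delta^N)$ (this detour, together with largeness, is what handles singular simplices with non-distinct or non-vertex vertices, a point your skeletal-induction description glosses over), and it achieves the required compatibility with $L$ by first building the simplicial approximation on $(\tilde L_N, V(\tilde L_N))$ and then extending to $\tilde K_N$ via the homotopy extension property and the relative Simplicial Approximation of Proposition \ref{proposition: simplicial approximation for pairs}.
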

\begin{proof}
	In \cite[Theorem 2]{FM23} the absolute case (with $L=\emptyset$) is deduced from the construction of a norm non-increasing (partial) chain map $\psi^\bullet_K\colon C^\bullet_b(K)\rightarrow C^\bullet_b(|K|)$ that induces the inverse of $H^n_b(\phi_K^\bullet)$ in bounded cohomology.
	The long exact cohomology sequence associated to (\ref{eq: isometry lemma}) induces the following commutative diagram
	\[
	\begin{tikzcd}
		H^{n-1}_b(|K|) \arrow[r] \arrow[d] & H^{n-1}_b(|L|) \arrow[r] \arrow[d] & {H^n_b(|K|,|L|)} \arrow[r] \arrow[d] & H^n_b(|K|) \arrow[r] \arrow[d] & H^n_b(|L|) \arrow[d] \\
		H^{n-1}_b(K) \arrow[r]             & H^{n-1}_b(L) \arrow[r]             & {H^n_b(K,L)} \arrow[r]               & H^n_b(K) \arrow[r]             & H^n_b(L),            
	\end{tikzcd}
	\]
	and therefore, by the Five Lemma, one deduces that also $H^n_b(\phi_{K,L}^\bullet)$ is an isomorphisms in bounded cohomology. 
	We need to show that this isomorphism in isometric. 
	To this end, it is enough to show that the inverse of $H^n_b(\phi_{K,L}^\bullet)$ is also induced by $\psi_K^\bullet$. 
	In turn, this is a consequence of the fact that \emph{$\psi_K^\bullet$ is natural with respect to the inclusion of submulticomplexes} i.e. for every submulticomplex $L$ of $K$, we have that $\psi_K^\bullet(C^\bullet_b(L))\subseteq C^\bullet_b(|L|)$.
	Unfortunately, the map $\psi^\bullet_K$ constructed in \cite{FM23} does not satisfy this last property, and, therefore, we need to slightly modify the construction.
	In \cite{FM23} the map $\psi^\bullet_K$ is defined at the chain level as the composition of two norm non-increasing (partial) chain maps
	\[
	\theta_K^i \colon C^i(K)\rightarrow C^i(\tilde{K}_N), \qquad A_K^i \colon C^i(\tilde{K}_N)\rightarrow C^i(|K|), \qquad  i\leq N
	\]
	where $N\in \N$ is a sufficiently large integer and $\tilde{K}_N$ is a particular submulticomplex of $\K(|K|\times \Delta^N)$. 
	Using the very same definition in \cite{FM23}, one can see that $A^i_K$ is natural with respect to the inclusion $L\subseteq K$.
	On the contrary, when it comes to $\theta^i_K$, we need to slightly modify the construction by making a more careful use of the homotopy extension property for CW-pairs.
	
	Since $\tilde{K}_N$ is a submulticomplex of $\K(|K|\times \Delta^N)$, we have the following commutative diagram
	\[
	\begin{array}{ccccccc}
		|\tilde{K}_N| &\subseteq & |\K(|K|\times \Delta^N)| &\longrightarrow & |K|\times \Delta^N & \longrightarrow & |K| \\
		\vertsubseteq &			& \vertsubseteq & 				& \vertsubseteq & 			& \vertsubseteq \\
		|\tilde{L}_N| &\subseteq & |\K(|L|\times \Delta^N)| &\longrightarrow & |L|\times \Delta^N & \longrightarrow & |L|,
	\end{array}
	\]
	where the horizontal arrows are the natural projections and $\tilde{L}_N$ is the intersection of $\K(|L|\times \Delta^N)$ and $\tilde{K}_N$.	
	Let now $$f\colon (|\tilde{K}_N|,|\tilde{L}_N|) \rightarrow (|K|,|L|)$$ be the continuous map obtained by composing all the maps in the diagram. 
	Observe that $f$ is not simplicial in general. However, we want to show that $f$ is homotopic (as a map of pairs) to a non-degenerate simplicial map \[\tilde{f} \colon (\tilde{K}_N,\tilde{L}_N)\rightarrow (K,L).\]
	If we define $\theta^i_K$ to be the map induced by $\tilde{f}$ at the level of cochains, it clearly follows that also $\theta^i_K$ is natural with respect to the inclusion $L\subseteq K$.
	
	We can construct $\tilde{f}$ in the following way.
	Recall from \cite{FM23} that every vertex of $\tilde{K}_N$ can be described as a pair $(x,v)$ where $x \in |K|$ and $v$ is a vertex of $\Delta^N$. Define an equivalence relation on $V(\tilde{K}_N)$ by setting $(x,v)\sim(y,w)$ if and only if $(x,v)$ and $(y,w)$ do not span a 1-simplex in $\tilde{K}_N$.
	Since $|L|$ is path connected and has infinitely many vertices, by the homotopy extension property for the CW-pair $(|\tilde{L}_N|,V(\tilde{L}_N))$, we have that $f|_{{|\tilde{L}_N|}}$ is homotopic to a continuous map \[\bar{f}_L\colon |\tilde{L}_N|\rightarrow |L|\] sending all the vertices in the same equivalence class to the same vertex of $L$, with the requirement that two distinct equivalence classes are sent to different vertices of $L$. 
	It follows that $\bar{f}_L$ is injective on every set of vertices spanning a simplex. Therefore, by \cite[Proposition 3.11]{FM23}, $\bar{f}_L$ is homotopic relative $V(\tilde{L}_N)$ to a non-degenerate simplicial map $\tilde{f}_L\colon \tilde{L}_N\rightarrow L$.
	Now, by the homotopy extension property for the CW-pair $(|\tilde{K}_N|,|V(\tilde{K}_N)\cup \tilde{L}_N|)$, we obtain that $f$ is homotopic (as a map of pairs) to a map $$\bar{f}\colon (|\tilde{K}_N|,|\tilde{L}_N| ) \rightarrow (|K|,|L|)$$ sending all the vertices in the same equivalence class to the same vertex of $K$ (and different equivalence classes in different vertices), and such that $\bar{f}$ is a non-degenerate simplicial map on $V(\tilde{K}_N)\cup \tilde{L}_N$ . 
	Finally, by Proposition \ref{proposition: simplicial approximation for pairs}, $\bar{f}$ is homotopic (as a map of pairs) relative to $V(\tilde{K}_N)\cup \tilde{L}_N$ to a non-degenerate simplicial map $$\tilde{f}\colon (\tilde{K}_N,\tilde{L}_N)\rightarrow (K,L).$$
	
	In conclusion, we define $\theta^i_K\colon C^i(K)\rightarrow C^i(\tilde{K}_N)$ to be the map induced by $\tilde{f}$ at the level of cochains. Now, it is straightforward to adapt the argument in \cite{FM23} in order show that $\psi^\bullet_K$ still induces the inverse of $H^n_b(\phi^\bullet_K)$ in bounded cohomology, and the same holds for the corresponding restrictions to $L$. The conclusion follows.
\end{proof}

\subsection{Simplicial actions of amenable groups}
\label{subsec: simplicial actions of amenable groups}
We know that amenable groups are invisible to bounded cohomology.
To our purposes, we need the following result regarding amenable groups of simplicial automorphisms, which is a generalization of \cite[Theorem 4.21]{FM23} to pairs of multicomplexes.
If a group $G$ acts simplicially on a pair of multicomplexes $(K,L)$, then it induces a linear action $G\acts C_\bullet(K)$ on the space of algebraic simplices, which in turns induces a linear action $G\acts C^\bullet_b(K,L)$ on bounded cochains. We denote by $C^\bullet_b(K,L)^G\subseteq C^\bullet_b(K,L)$ the subcomplex of $G$-invariant bounded cochains, and we use a similar notation for the induced actions on $K$ and $L$. We consider the following commutative diagram
\[
\begin{tikzcd}
	0 \arrow[r] & {C_b^\bullet(K,L)^G} \arrow[r] \arrow[d, "{\iota^\bullet_{K,L}}"] & C_b^\bullet(K)^G \arrow[r] \arrow[d, "\iota^\bullet_K"] & C_b^\bullet(L)^G \arrow[r] \arrow[d, "\iota^\bullet_L"] & 0 \\
	0 \arrow[r] & {C_b^\bullet(K,L)} \arrow[r]                                              & C_b^\bullet(K) \arrow[r]                                      & C_b^\bullet(L) \arrow[r]                                & 0,
\end{tikzcd}
\]
where the vertical arrows are inclusions of complexes.
The constant $C_i$, $i \in \N_{\geq 1}$, appearing in the following is introduced in Definition \ref{defn: constants C_n}.
\begin{prop}
	\label{proposition: amenable - invariant resolutions}
	Let $G\acts (K,L)$ be a group action on a pair of multicomplexes. For every $i \in \N$, let $G_i$ denote the subgroup of $G$ acting trivially on $K^i$.
	Assume that $G/G_i$ is amenable for every $i\in \N$ and that, for every $g\in G$, the simplicial automorphism $g: (K,L) \rightarrow (K,L)$ is simplicially homotopic to the identity as a map of pairs.
	Then, for every $k \in \N$, there exists a norm non-increasing (partial) chain map
	\[
	A^i_{K,L}\colon C^i_b(K,L)\rightarrow C^i_b(K,L)^G, \qquad i\in\{0,\dots, k\},
	\]
	such that $A^i_{K,L}\circ \iota^i_{K,L}$ is the identity of $C^i_b(K,L)^G$ and $\iota^i_{K,L} \circ A^i_{K,L}$ is chain homotopic to the identity via a (partial) algebraic homotopy \[T^i\colon C^i_b(K,L)\rightarrow C^{i-1}_b(K,L)\] such that $\|T^i\|\leq C_i$ for every $i\in\{0,\dots, k\}$.
	In particular, the inclusion of invariant cochains
	\[
	\iota^\bullet_{K,L} \colon C^\bullet_b(K,L)^G\hookrightarrow C^\bullet_b(K,L)
	\]
	induces, for every $n \in \N$, an isometric isomorphism
	\[
	H^n_b(\iota^\bullet_{K,L})\colon H^n( C^\bullet_b(K,L)^G) \rightarrow H^n_b(K,L).
	\]
\end{prop}
\begin{proof}
	Let $k \in \N$.
	By a standard averaging procedure on cochains using left-invariant means on $G/G_i$, in \cite{FM23} the authors construct a norm non-increasing (partial) chain map 
	\[
	A^i_{K}\colon C^i_b(K)\rightarrow C^i_b(K)^G, \qquad i\in\{0,\dots, k\},
	\]
	such that $A_{K}^i\circ \iota_{K}^i = \id$ for every $i\in\{0,\dots, k\}$.
	Since the action $G\acts K$ leaves $L$ invariant, $A^i_K$ is natural with respect to the inclusion $L\subseteq K$.
	Therefore we are able to define, for every $k\in \N$, a norm non-increasing (partial) chain map 
	\[
	A^i_{K,L}\colon C^i_b(K,L)\rightarrow C^i_b(K,L)^G, \qquad i\in\{0,\dots, k\},
	\]
	such that $A_{K,L}^i\circ \iota_{K,L}^i = \id$ for every $i\in\{0,\dots, k\}$.
	In order to construct an algebraic homotopy between $\iota^i_{K,L} \circ A^i_{K,L}$ and the identity we proceed as follows. For every $g\in G$, we denote by
	\[
	t^i_{K,L}(g)\colon C^i_b(K,L) \rightarrow C^i_b(K,L)
	\]
	the chain map induced by $g$. Similarly we have $t^i_K(g)$ and $t^i_L(g)$. 
	Since $g \colon (K,L)\rightarrow (K,L)$ is simplicially homotopic to the identity as a map of pairs, then also the induced maps on $K$ and $L$ are.
	Therefore there are chain homotopies $T^i_K(g)\colon C^i_b(K)\rightarrow C^{i-1}_b(K)$ and $T^i_L(g)\colon C^i_b(L)\rightarrow C^{i-1}_b(L)$ such that the right square of the following diagram is commutative
	\[
	\begin{tikzcd}
		0 \arrow[r] & {C_b^i(K,L)} \arrow[r] \arrow[d, "{T^i_{K,L}(g)}"] & C_b^i(K) \arrow[r] \arrow[d, "T^i_K(g)"] & C_b^i(L) \arrow[r] \arrow[d, "T^i_L(g)"] & 0 \\
		0 \arrow[r] & {C_b^{i-1}(K,L)} \arrow[r]                                 & C_b^{i-1}(K) \arrow[r]                          & C_b^{i-1}(L) \arrow[r]                               & 0.
	\end{tikzcd}
	\]
	It follows that $T^i_K(g)$ induces an algebraic homotopy $T^i_{K,L}(g)$ between $t^i_{K,L}(g)$ and the identity. 
	In conclusion, using the same averaging operations as in \cite{FM23}, one can show that $\iota^i_{K,L}\circ A^i_{K,L}$ is homotopic to the identity via a (partial) algebraic homotopy \[T^i\colon C^i_b(K,L)\rightarrow C^{i-1}_b(K,L).\] 
	By Remark \ref{rem: algebraic homotopy is bounded}, we know that $\|t^i_K(g)\|\leq C_i$.
	Thanks to the properties of the means on $G/G_i$, it is easy to deduce that $\|T^i\|\leq C_i$ (see \cite{FM23} for the details).
\end{proof}

The fact that the action $G\acts K$ preserves the submulticomplex $L$ is crucial in this section. 
However, in our applications, we also deal with actions which do \emph{not} preserve a particular submulticomplex (see Remark \ref{rem: relative construction and regularity of action}).
In order to consider invariant cochains also in this case, some regularity of the action $G\acts K$ on $L$ is required (see Definition \ref{defn: having orbits in L induced by H}). We refer the reader to Section \ref{sec: regularity of simplicial actions} for a detailed discussion on this topic. 
\section{Pairs of multicomplexes from pairs of spaces}
\label{sec: pair of multicomplexes for pairs of spaces}
Let $X$ be a CW-complex.
A fundamental result of Gromov states that the bounded cohomology of $X$ is isometrically isomorphic to the simplicial bounded cohomology of a complete minimal and aspherical multicomplex $\A(X)$ associated to $X$ \cite[Section 3.3]{Gro82} \cite[Corollary 4.24]{FM23}. 
In this section we investigate the functorial properties of Gromov's construction and we establish similar results in the relative setting.

Let $(X,A)$ be a CW-pair.
In general, we do not assume $X$ or $A$ to be path-connected.
Being injective, the inclusion map $j\colon A \hookrightarrow X$ naturally induces a simplicial embedding $j_\K\colon \K(A)\hookrightarrow \K(X)$ of singular multicomplexes.
We know that the natural projection \[S_X\colon (|\K(X)|,|\K(A)|)\rightarrow (X,A)\] is a homotopy equivalence of pairs (Proposition \ref{prop: S_X:K(X) to X induces homotopy equivalence of pairs}). 
We consider the submulticomplexes $\Elle(A)$ and $\Elle(X)$ (see Subsection \ref{subsection: minimal and aspherical multicomplexes of spaces}), and we denote by
\[
i_A\colon \Elle(A)\hookrightarrow \K(A), \qquad i_X\colon \Elle(X)\hookrightarrow \K(X),
\]
the simplicial inclusions.
One can construct an obvious simplicial map $j_\Elle$ from $\Elle(A)$ to $\Elle(X)$ which sends each simplex $\sigma$ of $\Elle(A)$ to the unique simplex of $\Elle(X)$ which is homotopic to $\sigma$ relative to the 0-skeleton.
\begin{prop}{\cite[Section 1.3]{Kue15}}
	\label{prop: j_Elle exists}
	There is a simplicial map \[j_\Elle\colon \Elle(A)\rightarrow \Elle(X)\] such that $j_\K \circ i_A = i_X \circ j_\Elle$. 
	Moreover, for every simplex $\Delta$ of $\Elle(A)$, there is a homotopy $H_\Delta \colon |\Delta|\times [0,1]\rightarrow X$ relative to the vertices between a parametrization $\sigma \colon |\Delta|\rightarrow X$ of $\Delta$ and a parametrization of $j_\Elle(\Delta)$ such that, for every submulticomplex $\Delta'$ of $\Delta$, the restriction of $H_\Delta$ to $|\Delta'|\times [0,1]$ coincides with $H_{\Delta'}$.
\end{prop}
\begin{proof}
	We define $j_\Elle$ inductively on the skeleta. 
	On the 0-skeleton, $j_\Elle$ is simply defined as the map induced by the inclusion $j\colon A\hookrightarrow X$. In this case, for every vertex $v$ of $\Elle(A)$, we set $H_v$ to be the constant map on $v$.
	Assume we have defined $j_\Elle$ on the $n$-skeleton of $\Elle(A)$ and that, for every $k$-simplex $\Delta'$ of $\Elle(A)$, $k\in\{0,\dots, n\}$, we have an homotopy $H_{\Delta'}$ as in the statement.
	We define $j_\Elle$ on the $(n+1)$-skeleton as follows. 
	Let $\Delta$ be an $(n+1)$-simplex of $\Elle(A)$ and let $\sigma\colon |\Delta|\rightarrow X$ denote a parametrization of $\Delta$.
	We want to extend $\sigma$ to a continuous map $\bar{\sigma}\colon |\Delta|\rightarrow X$ whose facets are all contained in $\Elle(X)^n$. To this end, we exploit the homotopies $H_{\Delta'}$, where $\Delta'\subset \Delta$ is a proper face of $\Delta$, as follows. We consider the diagram
	\[
	\begin{tikzcd}
		{|\Delta|\sqcup\left(\displaystyle\bigsqcup_{\Delta' \subset \Delta} |\Delta'|\times [0,1]\right)} \arrow[d] \arrow[r, "f"] & X \arrow[d, equal] \\
		{|\Delta|} \arrow[r, "\bar{\sigma}"'] &  X,
	\end{tikzcd}
	\]
	where $f$ is induced by $\sigma$ and the homotopies $H_{\Delta'}$, respectively, while the vertical map is the quotient map induced by the obvious identification of faces. By the compatibility condition of the homotopies with respect to the inclusion of faces, it follows that $f$ factors through the quotient map.
	Therefore, it defines a singular simplex $\bar{\sigma}$ of $X$, whose facets, by construction, represent simplices in $\Elle(X)$. 
	By the definition of $\Elle(X)$, there exists a unique $(n+1)$-simplex $j_\Elle(\Delta)$ of $\Elle(X)$ which is homotopic to $\bar{\sigma}$ in $X$ relative to the boundary. 
	Finally, by construction, we have a homotopy \[H_\Delta\colon |\Delta|\times [0,1]\rightarrow X\] relative to the vertices between $\Delta$ and $j_\Elle(\Delta)$ which satisfies the compatibility conditions in the statement.
\end{proof}
The map $j_\Elle$ is not injective in general: to this end one needs to have more control on the homotopy of the pair $(X,A)$.
\begin{prop}
	\label{prop: when j_Elle is injective}
	Let $n \in \N_{\geq 1}$. The simplicial map $j_\Elle$ is injective on the $n$-skeleton if and only if the map $\pi_k(A\hookrightarrow X,x)$ is injective for every $x \in A$ and every $k \in \{0,\dots, n\}$.
\end{prop}
\begin{proof}
	For every $x \in A$ and every $k$-simplex $\Delta$ of $\K(A)$ having $x$ as a vertex, we can consider the following commutative diagram
	\begin{equation}
		\label{eq: j_Elle injective}
		\begin{tikzcd}
			\pi_{\Elle(A)}(\Delta) \arrow[d, "j_\Delta"] \arrow[r, "\Theta_A"] & {\pi_k({|\Elle(A)|},x)} \arrow[d, "{\pi_k(|j_\Elle|,x)|}"] \arrow[r, "\cong"] & {\pi_k(A,x)} \arrow[d, "{\pi_k(j,x)}"] \\
			\pi_{\Elle(X)}(\Delta) \arrow[r, "\Theta_X"]                       & {\pi_k({|\Elle(X)|},x)} \arrow[r, "\cong"]                                    & {\pi_k(X,x),}                          
		\end{tikzcd}
	\end{equation}
	where vertical arrows are induced either by $j_\Elle$ or by the inclusion $j$, horizontal arrows of the left squares are bijective (Proposition \ref{prop: homotopy groups of complete multicomplexes}) and horizontal arrows of the right square, induced by $S_X$ and $S_A$, are isomorphisms \cite[Corollary 3.27]{FM23}.
	By using the homotopies $H_{\Delta'}$ in Proposition \ref{prop: j_Elle exists}, one can show that the diagram commutes.
	Therefore, we have that $\pi_k(j,x)$ is injective if and only if $j_\Delta$ is injective. 
	
	Assume now that $j_\Elle$ is injective on the $n$-skeleton. Let $k\leq n$, $x \in A$ and let $\Delta_0$ be a $k$-simplex of $\K(A)$ having $x$ as a vertex. It follows from our assumptions that $j_{\Delta}$ is injective, hence also $\pi_k(j,x)$ is so.
	Assume on the contrary that $\pi_k(A\hookrightarrow X,x)$ is injective for every $x \in A$ and every $k \in \{0,\dots, n\}$. We show that $j_\Elle$ is injective by induction of the skeleta. This is clearly true on the 0-skeleton. 
	In general, let $\Delta$ and $\Delta'$ be $(k+1)$-simplices of $\Elle(A)$ such that $j_\Elle(\Delta)=j_\Elle(\Delta')$.
	By the inductive hypothesis, we deduce that $\Delta$ and $\Delta'$ are compatible. By considering diagram (\ref{eq: j_Elle injective}), we have that $j_\Delta$ is injective. Since $j_\Delta(\Delta')=j_\Delta(\Delta)$, it follows that $\Delta=\Delta'$.
\end{proof}
Let $\pi \colon \Elle(X)\rightarrow \A(X)$ denote the simplicial projection which identifies simplices sharing the same 1-skeleton. 
Of course, $j_\Elle$ factors to a well-defined simplicial map $j_\A\colon \A(A)\rightarrow \A(X)$.
We denote by $\A_X(A)$ the image of $\A(A)$ in $\A(X)$ via $j_\A$, so that the pair of multicomplexes $(\A(X),\A_X(A))$ is well-defined, and we still denote by $j_\A\colon \A(A)\rightarrow \A_X(A)$ the surjective map induced by $j_\A$.
In short, we have the following commutative diagram of simplicial maps
\[
\begin{tikzcd}
	\K(A) \arrow[d, "j_\K", hook] & \Elle(A) \arrow[l, "i_A"', hook'] \arrow[d, "j_\Elle"] \arrow[r, "\pi"] & \A(A) \arrow[d, "j_\A"] \arrow[r, "j_\A"] & \A_X(A) \arrow[d, hook] \\
	\K(X)                         & \Elle(X) \arrow[l, "i_X"', hook'] \arrow[r, "\pi"]                      & \A(X) \arrow[r, "\id"]                 & \A(X).                  
\end{tikzcd}
\]

Since simplices of $\A(X)$ are uniquely determined by their 1-skeleton, the following observation can be easily deduced from Proposition \ref{prop: when j_Elle is injective}.
\begin{prop}{\cite[Section 1.3]{Kue15}}
	\label{prop: j_A is a simplicial embedding}
	Let $(X,A)$ be a CW-pair such that $A$ is $\pi_1$-injective in $X$. Then $j_\A\colon \A(A)\rightarrow \A(X)$ is a simplicial embedding and the submulticomplex $\A_X(A)$ of $\A(X)$ is canonically isomorphic to $\A(A)$.
\end{prop}

\subsection{The homotopy type of $\A_X(A)$}
\label{subsec: the homotopy type of A_X(A)}
Let $(X,A)$ be a CW-pair. 
The multicomplex $\A_X(A)$ introduced above admits the following equivalent characterization.
We consider the action on $\A(A)$ of $\bigoplus_{x \in A}\pi_1(A,x)$, understood as a subgroup of $\Pi(A,A)$ (see Subsection \ref{subsec: the action of Pi(X,X) on A(X)}).
For every $x \in A$, we set $\Gamma_x = \pi_1(A,x)$.
Let $\{A_i\,\colon \, i \in I\}$ denote the set of path-connected components of $A$.
For every $i \in I$, we fix a basepoint $\bar{x}_i\in A_i$ and we set $\Gamma_i = \Gamma_{\bar{x}_i}$. Let $N_i$ be the kernel of the morphism $\pi_1(A\hookrightarrow X, \bar{x}_i)$.
For every $x \in A_i$, there is an isomorphism $\Gamma_i \cong \Gamma_x$, which is canonical up to conjugacy. 
Therefore, being $N_i$ normal, we can find a well-defined isomorphic image $N_x$ of $N_i$ inside $\Gamma_x$, so that $N_{\bar{x}_i}=N_i$. We set 
\[
\hat{N}=\bigoplus_{i \in I}\bigoplus_{x \in A_i} N_x.
\]
Since the action of $\hat{N}$ restrict to the identity on the 0-skeleton of $\A(A)$, the quotient $\A(A)/\hat{N}$ is a well-defined multicomplex \cite[Proposition 1.14]{FM23}. We denote by $q \colon \A(A)\rightarrow \A(A)/\hat{N}$ the (simplicial) quotient map.

\begin{lemma}
	\label{lemma: homotopy tyoe  pe of A_X(A)}
	The multicomplex $\A_X(A)$ is simplicially isomorphic to the multicomplex $\A(A)/\hat{N}$.
	In particular, it is complete, minimal and aspherical and its topological realization $|\A_X(A)|$ is $\pi_1$-injective in $|\A(X)|$.
\end{lemma}
\begin{proof}	
	We first show that $\A_X(A)$ is simplicially isomorphic to $\A(A)/\hat{N}$.
	Let $e,e'$ be edges of $\A(A)$ sharing the same endpoints. We denote by $e * e'$ the loop obtained as one of the possible concatenations of $e$ and $e'$.
	By construction, $j_\A(e)=j_\A(e')$ if and only if $e * e'$ is in $N$.
	Moreover, by \cite[Lemma 5.5]{FM23}, we have that $q(e)=q(e')$ if and only if $e * e'$ is in $N$. 
	It follows that $j_\A$ factorizes through $q$ on the 1-skeleton.
	Since $\A(X)$ and $\A(A)/\hat{N}$ are complete, minimal and aspherical \cite[Theorem 5.9]{FM23}, then the factorization can be uniquely extended to a simplicial isomorphism 
	\[
	\hat{j}_\A\colon \A(A)/\hat{N}\rightarrow \A_X(A)
	\]
	such that $\hat{j}_\A\circ q = j_\A$.
	The second statement is a direct consequence of \cite[Theorem 5.9]{FM23}.
\end{proof}

\section{Proof of Theorem \ref{thm: relative mapping theorem, isometric version}}
\label{sec: proof of mapping theorem, isometric}

We have seen in the previous section how to associate to a CW-pair $(X,A)$ a pair of complete, minimal and aspherical multicomplexes $(\A(X),\A_X(A))$.
However, under additional assumptions on the homotopy of the pair, we can have more control on the objects involved.
\begin{defn}
	\label{defn: good pair}
	A CW-pair $(X,A)$ is called \emph{good} if the following conditions hold:
	\begin{itemize}
		\item[(1)] $\pi_1(A\hookrightarrow X,x)$ is injective, for every $x\in A$;
		\item[(2)] $\pi_n(A\hookrightarrow X,x)$ is an isomorphism, for every $x\in A$ and $n\in \N_{\geq 2}$.
	\end{itemize} 
\end{defn}
If $(X,A)$ is a good pair, we know that the multicomplex $\A_X(A)$ is simplicially isomorphic to the aspherical multicomplex $\A(A)$ (Proposition \ref{prop: j_A is a simplicial embedding}). 
Therefore the pair of multicomplexes $(\A(X),\A(A))$ is well-defined.
In this section, we show the bounded cohomology of a good pair $(X,A)$ is canonically \emph{isometrically} isomorphic to the \emph{simplicial} bounded cohomology of the pair $(\A(X),\A(A))$. Then, we discuss the role of higher homotopy in our framework and in Kuessner's work on relative bounded cohomology and multicomplexes \cite{Kue15}.

\subsection{Retractions on minimal multicomplexes}
Let $(X,A)$ be a CW-pair.
In Section \ref{sec: pair of multicomplexes for pairs of spaces} we have seen how to associate to $(X,A)$ a pair of complete multicomplexes $(\K(X),\K(A))$ and a simplicial map $j_\Elle\colon \Elle(A)\rightarrow \Elle(X)$ between the corresponding minimal multicomplexes.
If the map $\pi_n(A\hookrightarrow X,x)$ is injective for every $x \in A$ and every $n \in \N_{\geq 1}$, we know by Proposition \ref{prop: when j_Elle is injective} that $j_\Elle$ is a simplicial embedding. In this case, we still denote by $\Elle(A)$ the isomorphic image of $\Elle(A)$ inside $\Elle(X)$.
\begin{setup}
	\label{setup: (X,A) pi_n injective}
	Let $(X,A)$ be a CW-pair. Assume that, for every $x\in A$ and for every $n\in \N_{\geq 1}$, the map $\pi_n(A\hookrightarrow X,x)$ is injective, so that $\Elle(A)\subseteq \Elle(X)$.
\end{setup}
We know that the pair $(\K(X),\Elle(X))$ is equipped with a simplicial retraction $r_X\colon \K(X)\rightarrow \Elle(X)$ which makes the (geometric realization of the) inclusion map $i_X\colon \Elle(X)\hookrightarrow \K(X)$ a homotopy equivalence \cite[Theorem 3.23]{FM23}.
The following result shows that, in the situation of Setup \ref{setup: (X,A) pi_n injective}, one can construct a retraction $r_X$ which is functorial with respect to the inclusion $\K(A)\subseteq \K(X)$.
\begin{prop}
	\label{prop: retraction on minimal multicomplexes, for pairs}
	Let $(X,A)$ be as in Setup \ref{setup: (X,A) pi_n injective}.
	There exists a simplicial retraction $r\colon (\K(X),\K(A))\rightarrow (\Elle(X),\Elle(A))$ whose geometric realization realizes the pair $(|\Elle(X)|,|\Elle(A)|)$ as a strong deformation retract of the pair $(|\K(X)|,|\K(A)|)$. In particular, the inclusion \[(|\Elle(X)|,|\Elle(A)|)\hookrightarrow (|\K(X)|,|\K(A)|)\] is a homotopy equivalence of pairs.
\end{prop}
\begin{proof}
	First of all, we observe that, under our assumptions, for every $n\in \N_{\geq 1}$ and for every pair of $n$-simplices $\Delta$, $\Delta'$ of $\K(A)$, we have that $\Delta$ and $\Delta'$ are homotopic in $\K(X)$ if and only if they are homotopic in $\K(A)$. In fact, if $\Delta$ and $\Delta'$ are homotopic in $\K(A)$, they are clearly homotopic in $\K(X)$. On the other hand, if $x$ denotes a vertex of $\Delta$, we can consider the following commutative diagram
	\begin{equation}
		\label{eq: (K(X),K(Y)) is full}
		\begin{tikzcd}
			\pi_{\K(A)}(\Delta) \arrow[d, "j_\Delta"] \arrow[r, "\Theta_A"] & {\pi_n({|\K(A)|},x)} \arrow[d, "{\pi_n(|j_\K|,x)}"] \arrow[r, "\cong"] & {\pi_n(A,x)} \arrow[d, "{\pi_n(j,x)}"] \\
			\pi_{\K(X)}(\Delta) \arrow[r, "\Theta_X"]                       & {\pi_n({|\K(X)|},x)} \arrow[r, "\cong"]                   & {\pi_n(X,x)},               
		\end{tikzcd}
	\end{equation}
	where the vertical arrows are induced by inclusions, the horizontal arrows of the left squares are surjective (Proposition \ref{prop: homotopy groups of complete multicomplexes}) and the horizontal arrows of the right square, induced by $S_X$ and $S_A$, are isomorphisms (Proposition \ref{prop: S_X:K(X) to X induces homotopy equivalence of pairs}).
	Since $\Delta$ and $\Delta'$ are compatible, we have that $\Delta'\in \pi_{\K(A)}(\Delta)$.
	By Proposition \ref{prop: homotopy groups of complete multicomplexes}, since $\Delta$ and $\Delta'$ are homotopic in $\K(X)$, we have that $\Theta_X(j_\Delta(\Delta))=\Theta_X(j_\Delta(\Delta')),$ and hence, by the commutativity of (\ref{eq: (K(X),K(Y)) is full}), we get $\pi_n(|j_\K|,x)\circ \Theta_A(\Delta) = \pi_n(|j_\K|,x)\circ\Theta_A(\Delta')$.
	Since $\pi_n(j,x)$ is injective (and thus $\pi_n(|j_\K|,x)$ is also injective), we have that $\Theta_A(\Delta)= \Theta_A(\Delta')$. It follows that $\Delta$ and $\Delta'$ are homotopic in $\K(A)$.
	
	Since the map $j_\Elle\colon \Elle(A)\rightarrow \Elle(X)$ is a simplicial embedding, the pair $(\Elle(X), \Elle(A))$ can be described inductively in the following way (see Subsection \ref{subsection: minimal and aspherical multicomplexes of spaces}).
	First, we have that $\Elle(X)^0=\K(X)^0$.
	Once $\Elle(X)^n$ has been constructed, we define $\Elle(X)^{n+1}$ by adding to $\Elle(X)^n$ one $(n+1)$-simplex for every homotopy class of $(n+1)$-simplices of $\K(X)$ whose facets are all contained in $\Elle(X)^n$. 
	Moreover, if the facets are all contained in $\Elle(X)^n\cap \K(A)$ and if there is a simplex of $\K(A)$ in the corresponding homotopy class, then we chose this as a representative. In the previous paragraph, when we refer to homotopic simplices, we mean \emph{homotopic in $\K(X)$}, but we have seen that this is equivalent to require them to be \emph{homotopic in $\K(A)$}.
	In this way we obtain that $\Elle(A)=\Elle(X)\cap \K(A)$.
	
	We prove now that the pair $(|\Elle(X)|,|\Elle(A)|)$ is a strong deformation retract of $(|\K(X)|,|\K(A)|)$. As in \cite{FM23}, it is sufficient to construct, for every $n\in \N$, a map $r_n: (|\K(X)|,|\K(A)|)\rightarrow (|\K(X)|,|\K(A)|)$ and a homotopy (of pairs) $h_n \colon |\K(X)|\times [0,1]\rightarrow |\K(X)|$ between $r_n$ and $r_{n+1}$ such that the following properties hold: $r_0$ is the identity of $|\K(X)|$; $r_n|_{|\K(X)^n|}$ is a simplicial retraction of $(|\K(X)^n|,|\K(A)^n|)$ onto $(|\Elle(X)^n|,|\Elle(A)^n|)$; $r_{n+1}$ and $r_n$ coincide when restricted to $|\K(X)^n|$; $h_{n+1}(x,t)=r_n(x)$ for every $x \in |\K(X)^n|$ and $t \in [0,1]$.
	
	For $n=0$, we set $r_0=\text{id}_{|\K(X)|}$. Assume we have defined $r_n$ and $h_{n-1}$. 
	We set then $r_{n+1}|_{|\K(X)^n|}=r_n|_{|\K(X)^n|}$ and we define $r_{n+1}$ on $(n+1)$-simplices as follows. Let $\Delta$ be a $(n+1)$-simplex of $\K(X)$. 
	If $\Delta \in \K(A)$, since $r_n|_{\partial |\Delta|}$ is a simplicial embedding, by making use of the completeness of $\K(A)$ and the definition of $\Elle(X)$, we deduce that there exists a homotopy $h_\sigma \colon |\Delta|\times [0,1]\rightarrow |\K(A)|$ between the characteristic map of $\Delta$ and the characteristic map of a simplex $\Delta'\in \Elle(A)$.
	If $\Delta$ is a simplex of $\K(X)$, we invoke the completeness of $\K(X)$ instead to obtain a simplex $\Delta' \in \Elle(X)$. 
	In both cases, we set $r_{n+1}(\Delta)=\Delta '$.
	We obtain in this way the desired maps (see \cite{FM23} for the details), and this concludes the construction of the deformation retraction $r$. Notice that the map $r$ is simplicial.
\end{proof}
\begin{cor}
	\label{cor: projection L(X) to X is a homotopy equivalence of pairs}
	Let $(X,A)$ be as in Setup \ref{setup: (X,A) pi_n injective} and let 
	\[
	i \colon (|\Elle(X)|,|\Elle(A)|)\rightarrow (|\K(X)|,|\K(A)|)
	\]
	denote the inclusion. Then the composition
	\[
	S \circ i \colon (|\Elle(X)|,|\Elle(A)|) \rightarrow (X,A)
	\]
	is a homotopy equivalence of pairs.
\end{cor}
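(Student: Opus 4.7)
The plan is to realize $S \circ i$ as a composition of two homotopy equivalences of pairs, and then invoke the fact that such a composition is itself a homotopy equivalence of pairs.

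First, I would invoke Proposition \ref{prop: S_X:K(X) to X induces homotopy equivalence of pairs}, which asserts that
\[
S \colon (|\K(X)|,|\K(A)|) \rightarrow (X,A)
\]
is a homotopy equivalence of pairs for any CW-pair $(X,A)$. This uses nothing beyond the CW-pair hypothesis that is already part of Setup \ref{setup: (X,A) pi_n injective}.

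Second, I would apply Proposition \ref{prop: minimal multicomplexes of pairs} to the pair of multicomplexes $(\K(X),\K(A))$. To do so, I need to check the two hypotheses. Completeness of both $\K(X)$ and $\K(A)$ is part of the general theory of the singular multicomplex (\cite[Theorem 3.7]{FM23}), applied to the CW-complexes $X$ and $A$. Fullness of the pair $(\K(X),\K(A))$ is precisely the content of Lemma \ref{lem: (K(X),K(Y)) is full}, whose hypothesis is exactly the $\pi_n$-injectivity assumption built into Setup \ref{setup: (X,A) pi_n injective}. Moreover, the uniqueness clause of Proposition \ref{prop: minimal multicomplexes of pairs} (combined with the definition of $\Elle(X)$ and $\Elle(A) = \Elle(X) \cap \K(A)$ that was just set up) identifies the submulticomplex $M$ of the proposition with $\Elle(X)$ and $M \cap \K(A)$ with $\Elle(A)$. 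Part (4) of the same proposition then yields that the inclusion
\[
i \colon (|\Elle(X)|,|\Elle(A)|) \hookrightarrow (|\K(X)|,|\K(A)|)
\]
is a homotopy equivalence of pairs (indeed, $(|\Elle(X)|,|\Elle(A)|)$ is a strong deformation retract).

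The conclusion follows by composing the two homotopy equivalences of pairs; composition preserves this property directly from the definition via homotopy inverses of pairs. There is no substantial obstacle here, since the corollary is an assembly of results already proved: the only things to verify are the hypotheses of the two propositions, and these are immediate from Setup \ref{setup: (X,A) pi_n injective} together with Lemma \ref{lem: (K(X),K(Y)) is full}.
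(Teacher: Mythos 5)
Your proposal is correct and is exactly the argument the paper intends: the corollary is stated as a direct consequence of Proposition \ref{prop: S_X:K(X) to X induces homotopy equivalence of pairs} and Proposition \ref{prop: minimal multicomplexes of pairs} (part (4)), with fullness of $(\K(X),\K(A))$ supplied by Lemma \ref{lem: (K(X),K(Y)) is full} under Setup \ref{setup: (X,A) pi_n injective}. Your verification of the hypotheses and the composition of the two homotopy equivalences of pairs is precisely what is needed.
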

\subsection{Aspherical multicomplexes for good pairs}
Let $(X,A)$ be as in Setup \ref{setup: (X,A) pi_n injective}.
We denote by $\A(X)$ the aspherical quotient of $\Elle(X)$ and by $\pi\colon \Elle(X) \rightarrow \A(X)$ the corresponding simplicial projection (which identifies all the simplices sharing the same 1-skeleton).
Recall that, since we are assuming $A$ to be $\pi_1$-injective in $X$, we have that $\A_X(A)$ is canonically isomorphic to $\A(A)$ (Proposition \ref{prop: j_A is a simplicial embedding}). 
Therefore, we write $\A(A)\subseteq \A(X)$, so that $\pi$ induces a well-defined map of pairs
\[
\pi \colon (\Elle(X), \Elle(A)) \rightarrow (\A(X), \A(A)).
\]

The following property is crucial in the sequel, when we show that the pair $(\A(X),\A(A))$ can be obtained as a quotient of $(\Elle(X),\Elle(A))$ by the action of a group of simplicial automorphisms (Corollary \ref{cor: aspherical pair is the quotient of the minimal one}).
\begin{lemma}
	\label{lem: pi^-1(A(A))=L(A) for admissible pairs}
	Let $(X,A)$ be as in Setup \ref{setup: (X,A) pi_n injective}. We have that $\pi^{-1}(\A(A))=\Elle(A)$ if and only if the map $\pi_n(A\hookrightarrow X,x)$ is surjective for every $x\in A$ and for every $n\in \N_{\geq 2}$.
\end{lemma}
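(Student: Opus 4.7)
The plan is to leverage Proposition \ref{prop: homotopy groups of complete multicomplexes}, which in a complete minimal multicomplex identifies compatible simplices with homotopy classes in the geometric realization, and then to translate the set-theoretic condition $\pi^{-1}(\A(A))=\Elle(A)$ into a homotopical one.

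For the forward implication $(\Rightarrow)$, I would fix $x \in A$, $n \in \N_{\geq 2}$, and $\alpha \in \pi_n(X,x) \cong \pi_n(|\Elle(X)|,x)$. Picking any $n$-simplex $\tau \in \Elle(A)$ having $x$ as a vertex, Proposition \ref{prop: homotopy groups of complete multicomplexes} applied to the complete minimal multicomplex $\Elle(X)$ produces a simplex $\sigma \in \Elle(X)$ compatible with $\tau$ and satisfying $\Theta(\sigma) = \alpha$. Since $\sigma$ and $\tau$ share the full boundary they a fortiori share the $1$-skeleton, hence $\pi(\sigma) = \pi(\tau) \in \A(A)$, forcing $\sigma \in \pi^{-1}(\A(A)) = \Elle(A)$. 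Applying Proposition \ref{prop: homotopy groups of complete multicomplexes} inside $\Elle(A)$ then exhibits $\alpha$ as the image of a class in $\pi_n(A,x)$.

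For the reverse implication $(\Leftarrow)$, I would proceed by induction on the dimension $n$ of simplices lying in $\pi^{-1}(\A(A))$. The cases $n \in \{0,1\}$ are trivial, since $\pi$ restricts to the identity on the $1$-skeleton. For $n \geq 2$, take $\sigma \in \pi^{-1}(\A(A))^n$ and let $\tau \in \Elle(A)^n$ be a simplex sharing its $1$-skeleton with $\sigma$. Each face $\partial_i \sigma$ then shares its $1$-skeleton with $\partial_i \tau \in \Elle(A)$ and satisfies $\pi(\partial_i \sigma) = \partial_i \pi(\tau) \in \A(A)$, so the inductive hypothesis places every face of $\sigma$ inside $\Elle(A)$. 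This puts the entire boundary of $\sigma$ inside $|\Elle(A)|$, where it represents a spherical class which is null-homotopic in $|\Elle(X)|$ (being filled by $\sigma$); by the injectivity of $\pi_{n-1}(|\Elle(A)|,x) \hookrightarrow \pi_{n-1}(|\Elle(X)|,x)$ inherited from Setup \ref{setup: (X,A) pi_n injective} via Corollary \ref{cor: projection L(X) to X is a homotopy equivalence of pairs}, it is already null-homotopic in $|\Elle(A)|$. The completeness of $\Elle(A)$ then produces a simplex $\sigma' \in \Elle(A)$ compatible with $\sigma$ in $\Elle(X)$. To upgrade this to $\sigma \in \Elle(A)$, I would invoke the surjectivity hypothesis: by Proposition \ref{prop: homotopy groups of complete multicomplexes} the class $[\dot{S}^n(\sigma',\sigma)] \in \pi_n(|\Elle(X)|,x)$ comes from some $\beta \in \pi_n(|\Elle(A)|,x)$, which in turn is realized by a simplex $\sigma'' \in \Elle(A)$ compatible with $\sigma'$; minimality of $\Elle(X)$ then forces $\sigma = \sigma'' \in \Elle(A)$.

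The main obstacle is the inductive step for $n \geq 3$: since $\sigma$ and $\tau$ only share the $1$-skeleton, not the full boundary, I cannot apply Proposition \ref{prop: homotopy groups of complete multicomplexes} directly to compare them. The technical heart of the argument is thus the production of the compatible partner $\sigma' \in \Elle(A)$, which forces the combined use of $\pi_{n-1}$-injectivity (to push the cycle $\partial \sigma$ back into $|\Elle(A)|$) and of the completeness of $\Elle(A)$ (to fill it simplicially).
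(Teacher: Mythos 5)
Your proposal is correct and follows essentially the same route as the paper: both directions reduce to the observation that $\pi^{-1}(\A(A))=\Elle(A)$ amounts to saying that every simplex of $\Elle(X)$ compatible with a simplex of $\Elle(A)$ already lies in $\Elle(A)$, which is translated into surjectivity of $\pi_n(A\hookrightarrow X,x)$ via the bijection $\Theta$ of Proposition \ref{prop: homotopy groups of complete multicomplexes}, and the reverse implication proceeds by induction on skeleta. The only (harmless) divergence is in producing the compatible partner simplex in $\Elle(A)$ during the inductive step: the paper extracts it directly from the structure of the aspherical quotient, whereas you derive it from $\pi_{n-1}$-injectivity of the Setup together with the completeness of $\Elle(A)$, which is a valid and in fact more explicit justification of the same point.
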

\begin{proof}
	For every $n$-simplex $\Delta$ of $\Elle(A)$ and every vertex $x$ of $\Delta$, we have the following commutative diagram
	\begin{equation}
		\label{eq: pi^-1(A(A))=L(A) for admissible pairs}
		\begin{tikzcd}
			\pi_{\Elle(A)}(\Delta) \arrow[d, "j_{\Delta}"] \arrow[r] & {\pi_n({|\Elle(A)|},x)} \arrow[d] \arrow[r, "\cong"] & {\pi_n(A,x)} \arrow[d, "{\pi_n(j,x)}"] \\
			\pi_{\Elle(X)}(\Delta) \arrow[r]                       & {\pi_n({|\Elle(X)|},x)} \arrow[r, "\cong"]           & {\pi_n(X,x),}               
		\end{tikzcd}
	\end{equation}
	where vertical arrows are induced by the inclusion and horizontal arrows are bijective (Proposition \ref{prop: homotopy groups of complete multicomplexes} and Proposition \ref{prop: S_X:K(X) to X induces homotopy equivalence of pairs}). 
	Hence, we have that $\pi_n(j,x)$ is surjective if and only if $j_\Delta$ is surjective. 
	Moreover, since $j_\Delta$ denotes just the inclusion of simplices, we have that $j_\Delta$ is surjective if and only if every simplex of $\Elle(X)$ compatible with $\Delta$ is contained in $\Elle(A)$.
	
	Assume that we have $\pi^{-1}(\A(A))=\Elle(A)$. Let $n\in \N_{\geq 2}$ and $x \in A$ and let $\Delta$ be a $n$-simplex of $\Elle(A)$ having $x$ as a vertex (if $\Delta$ does not exists, the result is trivially true). 
	We want to show that $j_\Delta$ in (\ref{eq: pi^-1(A(A))=L(A) for admissible pairs}) is surjective.
	Let $\Delta'$ be a $n$-simplex of $\Elle(X)$ that is compatible with $\Delta$. Since $\Delta$ and $\Delta'$ share the same 1-skeleton, we have that $\pi(\Delta')=\pi(\Delta)\in \A(A)$. By assumption, this implies that $\Delta' \in \pi^{-1}(\A(A))=\Elle(A)$.
	
	Vice versa, assume that the map $\pi_n(j,x)$ is surjective, for every $x\in A$ and for every $n\in \N_{\geq 2}$.
	First of all, since $\A(A)=\pi(\Elle(A))$, the inclusion $\Elle(A) \subseteq \pi^{-1}(\A(A))$ trivially holds.
	We prove the opposite inclusion by induction on the $n$-skeleton of $\Elle(X)$.
	For $n\in \{0,1\}$, the inclusion is clear, since $\pi$ induces the identity on the $1$-skeleton of $\Elle(X)$.
	When it comes to the 2-skeleton, let $\Delta'$ be a 2-simplex $\Elle(X)$ such that $\pi(\Delta') \in \A(A)$. 
	We want to show that $\Delta' \in \Elle(A)$. 
	Let $\Delta \in \Elle(A)$ such that $\pi(\Delta)=\pi(\Delta') \in \A(A)$. 
	By definition of the aspherical quotient, we have that $\Delta$ and $\Delta'$ share the same 1-skeleton, hence they are compatible.
	Consider now diagram (\ref{eq: pi^-1(A(A))=L(A) for admissible pairs}).
	By assumption $j$ (hence $j_{\Delta}$) is surjective, and therefore, since $\Delta'$ is compatible with $\Delta$ in $\Elle(X)$, it follows that $\Delta' \in \Elle(A)$.
	In general, let $\Delta'$ be a $n$-simplex of $\Elle(X)$ such that $\pi(\Delta') \in \A(A)$, $n\in \N_{\geq 3}$. Let $F_0,\dots, F_n$ be the facets of $\Delta'$. By induction we have that $F_i\in \Elle(A)$. Moreover, $\pi(F_i)$ are the facets of $\pi(\Delta') \in \A(A)$, and therefore there exists a $n$-simplex $\Delta$ of $\Elle(A)$ with facets $F_0,\dots, F_n$. Since $\Delta$ and $\Delta'$ are compatible, we can repeat the same argument above on diagram (\ref{eq: pi^-1(A(A))=L(A) for admissible pairs}) to deduce that $\Delta' \in \Elle(A)$.
\end{proof}

\subsection{Amenable groups of simplicial automorphisms}
Let $(X,A)$ be a good pair i.e. a CW-pair such that
\begin{itemize}
	\item[(1)] $\pi_1(A\hookrightarrow X,x)$ is injective, for every $x\in A$;
	\item[(2)] $\pi_n(A\hookrightarrow X,x)$ is an isomorphism, for every $x\in A$ and $n\in \N_{\geq 2}$.
\end{itemize}
In this section we prove that the pair $(\A(X),\A(A))$ can be obtained from the pair $(\Elle(X),\Elle(A))$ by taking the quotient with respect to a particular simplicial action (Corollary \ref{cor: aspherical pair is the quotient of the minimal one}). 
It turns out that this action is equivalent to the action by an amenable group. 
Since amenable groups are invisible to bounded cohomology (Proposition \ref{proposition: amenable - invariant resolutions}), this fact will be the key ingredient in our proof of Theorem \ref{thm: relative mapping theorem, isometric version}.
We begin with the following definition.

\begin{defn}
	\label{definition: group of simplicial automorphisms}
	Let $(K,L)$ be a pair of multicomplexes. We define $\Gamma(K,L)$ to be the group of all simplicial automorphisms in $\Aut(K,L)$ that are (topologically) homotopic to the identity (as maps of pairs) relative to the 0-skeleton of $K$.
	Moreover, for every $i\in \N_{\geq 1}$, we define
	\[
	\Gamma_i(K,L) = \bigl\{g\in \Gamma(K,L) \,\colon\, g|_{K^i} = \id \bigr\}.
	\]
	If $L=\emptyset$, we write $\Gamma_i(K)=\Gamma_i(K,\emptyset)$.
\end{defn}

We want to show that the groups $\Gamma_i(K,L)$ act as transitively as possible on the set of $(i+1)$-simplices of \emph{both} $K$ and $L$. To this end, we introduce the following notion.

\begin{defn}
	\label{definition: i-coherent pair}
	Let $i\in \N_{\geq 1}$. A pair of multicomplexes $(K,L)$ is \emph{$i$-coherent} if, for every pair of simplices $\Delta$, $\Delta'$ of $K$, sharing the same $i$-skeleton, one has $\Delta \in L$ if and only if $\Delta' \in L$.
\end{defn}

We recall that, for every simplex $\Delta$ of $K$, $\pi_K(\Delta)$ denotes the set of simplices of $K$ that are compatible with $\Delta$. If $(K,L)$ is a pair of multicomplexes, for every simplex $\Delta$ of $L$, we have in general just the inclusion $\pi_L(\Delta)\subseteq \pi_K(\Delta)$. However, if we assume the pair $(K,L)$ to be $i$-coherent, we have that \[\pi_K(\Delta_0)= \pi_L(\Delta_0),\] for every $(i+1)$-simplex $\Delta_0$ of $L$.
Moreover, if $(K,L)$ is 1-coherent, then it is $n$-coherent for every $n \in \N_{\geq 1}$.
\begin{lemma}
	\label{lem: (L(X),pi^{-1}A(A)) is 1-coherent}
	Let $(X,A)$ be a CW-pair. Then the pair \[(\Elle(X),\pi^{-1}(\A_X(A)))\] is 1-coherent. In particular, if $(X,A)$ is good, then the pair $(\Elle(X),\Elle(A))$ is 1-coherent.
\end{lemma}
\begin{proof}
	The first statement is obvious, since $\A(X)$ is obtained by identifying simplices of $\Elle(X)$ sharing the same 1-skeleton. The second statement follows from the fact that $\Elle(A)=\pi^{-1}(\A(A))$ for good pairs (Lemma \ref{lem: pi^-1(A(A))=L(A) for admissible pairs}).
\end{proof}
We are able to generalize \cite[Lemma 4.12]{FM23} to coherent pairs of multicomplexes.
\begin{lemma}
	\label{lemma: Gamma acts transitively on compatible simplices of pairs}
	Let $i\in \N_{\geq 1}$. Let $(K,L)$ be a $i$-coherent pair of multicomplexes. Assume that both $K$ and $L$ are complete and minimal. Let $\Delta_0$ be an $(i+1)$-simplex of $K$, and let $\Delta \in \pi_K(\Delta_0)$. Let $F$ be a facet of $\Delta_0$. Then there exists an element $g\in\Gamma_i(K,L)$ such that the following statements holds: $g(\Delta_0) = \Delta$, and $g(\Delta')=\Delta'$ for every $m$-simplex $\Delta'$, $m\geq i$, which does not contain $F$.
\end{lemma}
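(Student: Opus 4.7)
The plan is to reduce the statement to the absolute version \cite[Lemma 4.12]{FM23} applied to $K$, and then upgrade the resulting automorphism together with its homotopy to a map of pairs by exploiting the $i$-coherence hypothesis.

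First I would apply \cite[Lemma 4.12]{FM23} to $K$ (which is complete and minimal) with input data $(\Delta_0, \Delta, F)$, obtaining a simplicial automorphism $g \in \Gamma_i(K)$ such that $g(\Delta_0) = \Delta$ and $g(\Delta') = \Delta'$ for every $m$-simplex $\Delta'$, $m \geq i$, which does not contain $F$. In particular, $g$ restricts to the identity on the entire $i$-skeleton $K^i$, hence also on $L^i$.

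The second step is to verify that $g$ descends to an element of $\Aut(K,L)$. For any simplex $\sigma$ of $L$, the image $g(\sigma)$ is a simplex of $K$ of the same dimension as $\sigma$, and, since $g|_{K^i} = \id$, its $i$-skeleton coincides with that of $\sigma$. The $i$-coherence of the pair $(K,L)$ then forces $g(\sigma) \in L$, and the same argument applied to $g^{-1}$ gives $g^{-1}(L) \subseteq L$, so $g$ is a simplicial automorphism of the pair.

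The main obstacle is the last step: the topological homotopy $H \colon |K| \times [0,1] \to |K|$ between $g$ and $\id_{|K|}$ provided by the absolute lemma is constant on $K^0$, but a priori it need not satisfy $H(|L| \times [0,1]) \subseteq |L|$, which is required for $g$ to belong to $\Gamma_i(K,L)$. To produce a homotopy of pairs, I would retrace the inductive construction of $H$ from the proof of \cite[Lemma 4.12]{FM23}, invoking the completeness of $L$, in place of the completeness of $K$, whenever a simplex of $L$ is processed. Concretely, the absolute construction proceeds skeleton by skeleton, filling in for each simplex $\sigma$ containing $F$ a homotopy relative to $\partial \sigma$ that connects the characteristic map of $\sigma$ to that of $g(\sigma)$; when $\sigma \in L$ the previous step guarantees $g(\sigma) \in L$, so by the completeness of $L$ this homotopy can be chosen inside $|L|$. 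For simplices $\sigma$ not in $L$, the filling produced by the completeness of $K$ is used as in the absolute case, and since it is supported in the interior of $|\sigma|$, which is disjoint from $|L|$, the restriction of $H$ to $|L|$ is unaffected. Assembling these pieces yields a homotopy of pairs between $g$ and $\id_{|K|}$, relative to $K^0$, so that $g \in \Gamma_i(K,L)$ with the required properties.
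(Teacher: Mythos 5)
Your first two steps are sound and coincide with what the paper does: the paper also starts from the absolute statement \cite[Lemma 4.12]{FM23} and uses $i$-coherence to see that the relevant maps respect $L$ (there it appears as the observation that the ``swap'' map $\tilde f\colon K^i\cup\Delta_0\to K$ is a map of pairs). The problem is your third step, and it is not merely cosmetic. You describe the homotopy as being built by ``filling in for each simplex $\sigma$ containing $F$ a homotopy relative to $\partial\sigma$ that connects the characteristic map of $\sigma$ to that of $g(\sigma)$.'' No such homotopy exists in general: $\Delta_0$ and $\Delta$ are only \emph{compatible}, not homotopic, and by minimality a homotopy rel $\partial\Delta_0$ between their characteristic maps would force $\Delta=\Delta_0$, making the lemma vacuous. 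The homotopy actually used in \cite[Lemma 4.12]{FM23} is relative only to $K^i$ with the interior of $F$ removed: it sweeps $|F|$, and with it every simplex containing $F$, across the sphere $|\Delta_0|\cup|\Delta|$. The genuine difficulty --- which your argument never confronts --- is therefore that the track of the homotopy on $|L|$ passes through $|\Delta_0|\cup|\Delta|$ whenever $F$ is a simplex of $L$, and one has to arrange for this sweep to stay inside $|L|$ (here $i$-coherence enters again, since it guarantees $\Delta\in L$ exactly when $\Delta_0\in L$). Invoking completeness of $L$ to fill simplices ``rel $\partial\sigma$'' does not produce such a homotopy, and the step fails already for $\sigma=\Delta_0$.

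For comparison, the paper works with the correct homotopy from the outset: it realizes $\tilde f$ as a map of pairs via $i$-coherence, extends the explicit sweeping homotopy $h$ by the homotopy extension property arranged so that $H(|L|\times I)\subseteq|L|$ and so that $H$ is constant on every simplex not containing $F$, and only then applies the relative Simplicial Approximation (Proposition \ref{proposition: simplicial approximation for pairs}) and Proposition \ref{prop: homotopy equiv between minimal multicomplexes is simplicial isomorphism} to obtain $g\in\Gamma_i(K,L)$. To repair your argument you would need to replace the ``rel $\partial\sigma$'' fillings by this relative version of the sweeping homotopy and of simplicial approximation.
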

\begin{proof}
	The absolute case (with $L=\emptyset$) is treated in \cite{FM23}, where an element $g \in \Gamma_i(K)$ with the desired properties is constructed.
	We just need to make sure that $g$ preserves $L$. Consider the map $\tilde{f} \colon K^i \cup \Delta_0 \rightarrow K$, which sends $K^i \cup \Delta_0$ isomorphically to $K^i \cup \Delta$, extending the identity on $K^i$ and a linear isomorphism between $\Delta$ and $\Delta_0$. 
	Since $(K,L)$ is $i$-coherent, then $\tilde{f}$ induces the following map of pairs
	\[
	\tilde{f} \colon (K^i \cup \Delta_0, (K^i \cup \Delta_0)\cap L) \rightarrow (K,L).
	\]
	Let $K'\subseteq K^i$ be the submulticomplex of $K^i$ obtained by removing from $K^i$ the interior of $F$. 
	Using the very same homotopy $h\colon |K^i\cup \Delta_0|\times [0,1]\rightarrow |K|$ constructed in \cite{FM23}, it is easy to check that $\tilde{f}$ is homotopic (as a map of pairs) to the inclusion relative to $K'$. 
	Then, by the homotopy extension property of the CW-pair $(|K|, |K^i\cup \Delta_0|)$, there exists a homotopy $H\colon |K|\times [0,1]\rightarrow |K|$ extending $h$. 
	Let $\tilde{f}_1 = H_1 \colon |K| \rightarrow |K|$ and let $K''\subseteq K$ be the subcomplex of $K$ obtained by removing from $K$ the interior of $F$ and all the simplices containing $F$. 
	By looking at the explicit construction of $H$ given in \cite[Proposition 0.16]{Hat}, one realizes that $H$ may be assumed to be constant on $K''$ and such that $H(|L| \times I)\subseteq |L|$. 
	Therefore $\tilde{f}_1$ defines a map of pairs $\tilde{f}_1 \colon (|K|,|L|)\rightarrow(|K|,|L|)$ that is equal to identity on $K''$, hence it is simplicial when restricted on $K''\cup \Delta_0$. 
	By the completeness of $K$ and $L$, it follows from Proposition \ref{proposition: simplicial approximation for pairs} that there exists a non-degenerate simplicial map \[g \colon (K,L)\rightarrow (K,L)\] such that $\tilde{f}_1$ is homotopic to $|g|$ relative to $V(K)\cup K'' \cup \Delta_0$ (as maps of pairs). 
	By construction, $g$ is homotopic to the identity relative to the 0-skeleton, fixes $K^i$, sends $\Delta_0$ to $\Delta$, and restricts to the identity on $K''$.
	Moreover, $g$ is indeed an isomorphism of pairs of multicomplexes by Proposition \ref{prop: homotopy equiv between minimal multicomplexes is simplicial isomorphism}, and therefore it defines an element of $\Gamma_i(K,L)$.
\end{proof}

If $(K,L)$ is a $i$-coherent pair of multicomplexes, then it follows from Lemma \ref{lemma: Gamma acts transitively on compatible simplices of pairs} that two $(i+1)$-simplices of $K$ are compatible if and only if they are in the same $\Gamma_i(K,L)$ orbit. 
In other words, for every $(i+1)$-simplex $\Delta_0$ of $K$, the group $\Gamma_i(K,L)$ acts transitively on $\pi_K(\Delta_0) = \pi_L(\Delta_0)$.
This observation can be generalized in the following way.

\begin{prop}
	\label{prop: Gamma acts transitively on simplices}
	Let $i,n \in \N_{\geq 1}$ be such that $n\geq i+1$. Let $(K,L)$ be a $i$-coherent pair of multicomplexes and assume that both $K$ and $L$ are complete and minimal. Let $\Delta, \Delta'$ be $n$-simplices of $K$. Then $\Delta$ and $\Delta'$ are in the same $\Gamma_i(K,L)$-orbit if and only if they share the same $i$-skeleton.
\end{prop}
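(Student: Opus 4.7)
My plan is the following. The ``only if'' direction is immediate: every $g\in \Gamma_i(K,L)$ restricts to the identity on $K^i$, and therefore the $i$-skeleta of $\Delta$ and $g(\Delta)$, which lie inside $K^i$, coincide. For the converse I would proceed by induction on $n\geq i+1$. The base case $n=i+1$ is a direct application of Lemma \ref{lemma: Gamma acts transitively on compatible simplices of pairs}: for $(i+1)$-simplices, sharing the $i$-skeleton is the same as sharing the full boundary, so $\Delta$ and $\Delta'$ are compatible, and the lemma delivers the required $g\in\Gamma_i(K,L)$.

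For the inductive step $n>i+1$, I would split the argument into two phases. In the first phase I reduce to the compatible case. For every $j$, the corresponding facets $F_j\subset\Delta$ and $F'_j\subset\Delta'$ are $(n-1)$-simplices with the same $i$-skeleton, so by the inductive hypothesis they lie in the same $\Gamma_i(K,L)$-orbit. The plan is to iteratively match facets so as to produce $h\in \Gamma_i(K,L)$ with $\partial h(\Delta)=\partial \Delta'$. The main obstacle, on which this phase rests, is that successive moves must not undo earlier matches; this is arranged by carrying through the induction a strengthened form of the statement that controls which simplices are left fixed, modelled on the ``fixes every $m$-simplex not containing $F$'' clause of Lemma \ref{lemma: Gamma acts transitively on compatible simplices of pairs}, so that the $k$-th move can be chosen to leave the previously matched facets $F'_0,\dots,F'_{k-1}$ untouched.

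In the second phase, with $h(\Delta)$ and $\Delta'$ now compatible, I would adapt the construction in the proof of Lemma \ref{lemma: Gamma acts transitively on compatible simplices of pairs} by replacing $K^i$ with $K^{n-1}$. Specifically, I define $\tilde f\colon K^{n-1}\cup h(\Delta)\to K$ as the identity on $K^{n-1}$ together with the vertex-preserving linear isomorphism $h(\Delta)\to \Delta'$. This is well-defined and coincides with the identity on $K^{n-1}$: compatibility and a descending induction on the dimension of faces in the common boundary (each face of $h(\Delta)$ in $\partial h(\Delta)$ is determined by the two facets containing it, which agree in $h(\Delta)$ and $\Delta'$) force every face of $h(\Delta)$ in $\partial h(\Delta)$ to coincide with the corresponding face of $\Delta'$ as a simplex of $K$, so the isomorphism restricts to the identity on $\partial h(\Delta)\subseteq K^{n-1}$. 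Using completeness, one homotopes $\tilde f$ to the inclusion relative to $K^{n-1}$ minus the interior of a chosen facet of $h(\Delta)$, just as in Lemma \ref{lemma: Gamma acts transitively on compatible simplices of pairs}. Extending via the homotopy extension property for CW-pairs and applying Proposition \ref{proposition: simplicial approximation for pairs} and Proposition \ref{prop: homotopy equiv between minimal multicomplexes is simplicial isomorphism} then yield $g'\in\Gamma_{n-1}(K,L)\subseteq \Gamma_i(K,L)$ with $g'(h(\Delta))=\Delta'$; the preservation of $L$ is guaranteed by the $i$-coherence of $(K,L)$ exactly as in the proof of Lemma \ref{lemma: Gamma acts transitively on compatible simplices of pairs}. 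The composition $g=g'\circ h$ is then the desired element of $\Gamma_i(K,L)$.
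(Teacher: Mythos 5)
Your overall architecture --- induction on $n$, base case from Lemma \ref{lemma: Gamma acts transitively on compatible simplices of pairs}, reduction to the compatible case, and a final application of the same lemma at level $n-1$ (legitimate, since $i$-coherence implies $(n-1)$-coherence) --- is the right one, and the ``only if'' direction and your second phase are fine. The genuine gap is in the first phase: the ``strengthened form of the statement that controls which simplices are left fixed'' is never stated, and the natural candidates do not exist. If the strengthening asserts that $g$ with $g(\Delta)=\Delta'$ may be chosen to fix every $m$-simplex not containing a single prescribed face $F$ of $\Delta$, it is inconsistent as soon as $n\geq i+2$: any proper face $F$ of $\Delta$ fails to be contained in at least two facets of $\Delta$, and those facets would then have to be simultaneously fixed by $g$ and sent to the (in general different) corresponding facets of $\Delta'$. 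Nor can one take $F=\Delta$ itself, because the homotopy used in the proof of Lemma \ref{lemma: Gamma acts transitively on compatible simplices of pairs} necessarily sweeps across a facet of $\Delta_0$ (by minimality, distinct compatible simplices are not homotopic relative to the boundary), so the resulting automorphism cannot be assumed to fix simplices containing that facet. Concretely, already for $n=i+2$ your moves come straight from the lemma: the $k$-th move fixes everything not containing a chosen $i$-dimensional facet $G_k$ of the $k$-th facet, and $G_k$ is contained in exactly one facet of $\Delta$ other than the one being processed; when you reach the last facet, that other facet is unavoidably one you have already matched. So the $k$-th move cannot always ``be chosen to leave the previously matched facets untouched'', and your iteration does not close up.

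The missing idea is that one should not try to match the last facet at all. Order the facets so that the one facet each move is allowed to disturb has not yet been processed, match all facets except the final one, and then observe that the final one agrees automatically: once the other $n$ facets coincide, the boundary spheres of the current simplex and of $\Delta'$ are both null-homotopic in $|K|$ (each bounds a simplex), so the two remaining facets are compatible $(n-1)$-simplices ($n-1\geq 2$) whose associated sphere is null-homotopic; they are therefore homotopic, hence equal by minimality. This is where the minimality of $K$ enters the inductive step itself, not only the lemma, and it is the kind of input your sketch is missing. Note that the paper's own proof is a one-line deferral to \cite[Proposition 4.14]{FM23}, so you should check your phase~1 against that argument; as written, your proposal names the obstacle but does not overcome it.
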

\begin{proof}
	The argument is the same as the one of \cite[Proposition 4.14]{FM23}, invoking Lemma \ref{lemma: Gamma acts transitively on compatible simplices of pairs} when needed.
\end{proof}

Let now $(X,A)$ be a good pair and let $\Gamma_1=\Gamma_1(\Elle(X),\Elle(A))$. Since the action $\Gamma_1 \acts (\Elle(X),\Elle(A))$ is trivial on the 0-skeleton by definition, we obtain a well defined pair of multicomplexes
\[
(\Elle(X),\Elle(A))/\Gamma_1 = (\Elle(X)/\Gamma_1,\Elle(A)/\Gamma_1).
\]
Moreover, since the pair $(\Elle(X),\Elle(A))$ is 1-coherent (Lemma \ref{lem: (L(X),pi^{-1}A(A)) is 1-coherent}), Proposition \ref{prop: Gamma acts transitively on simplices} shows that we can characterize the aspherical quotient $(\A(X),\A(A))$ of $(\Elle(X),\Elle(A))$ as the quotient of the simplicial action $\Gamma_1\acts(\Elle(X),\Elle(A))$.
\begin{cor}
	\label{cor: aspherical pair is the quotient of the minimal one}
	Let $(X,A)$ be a good pair of topological spaces and let $\Gamma_1 = \Gamma_1(\Elle(X),\Elle(A))$.
	Then $(\Elle(X),\Elle(A))/\Gamma_1$ is canonically isomorphic to $(\A(X),\A(A))$.
\end{cor}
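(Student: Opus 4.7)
My plan is to build the canonical isomorphism by factoring the aspherical projection $\pi\colon (\Elle(X),\Elle(A))\to (\A(X),\A(A))$ through the quotient by $\Gamma_1$, and then to use Proposition \ref{prop: Gamma acts transitively on simplices} to show that the induced map is bijective on simplices in each dimension, and Lemma \ref{lem: pi^-1(A(A))=L(A) for admissible pairs} to verify that the subpair structure is preserved.

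First, I would observe that every $g\in\Gamma_1$ fixes the 1-skeleton of $\Elle(X)$ pointwise, while by definition of the aspherical quotient the projection $\pi$ identifies precisely the simplices of $\Elle(X)$ that share the same 1-skeleton. In particular, $\pi(g(\Delta))=\pi(\Delta)$ for every simplex $\Delta$ and every $g\in\Gamma_1$. Hence $\pi$ is constant on $\Gamma_1$-orbits, and since $\Gamma_1$ preserves $\Elle(A)$ by definition, the universal property of the (well-defined, since the action is $0$-trivial) quotient provides a simplicial map of pairs
\[
\bar{\pi}\colon (\Elle(X),\Elle(A))/\Gamma_1 \longrightarrow (\A(X),\A(A)).
\]

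Next I would check that $\bar{\pi}$ is a simplicial isomorphism on the absolute level. On the $0$- and $1$-skeleta, $\Gamma_1$ acts trivially and $\pi$ is the identity, so $\bar{\pi}$ is a bijection there. For simplices of dimension $n\geq 2$, surjectivity is clear since $\pi$ itself is surjective. For injectivity, suppose that two $n$-simplices $\Delta,\Delta'$ of $\Elle(X)$ are mapped to the same simplex of $\A(X)$; by construction of $\A(X)$ this means that $\Delta$ and $\Delta'$ share the same $1$-skeleton. Since $(\Elle(X),\Elle(A))$ is $1$-coherent by Lemma \ref{lem: (L(X),L(A)) are 1-coherent for admissible pairs} and both $\Elle(X)$ and $\Elle(A)$ are complete and minimal, Proposition \ref{prop: Gamma acts transitively on simplices} (with $i=1$) implies that $\Delta$ and $\Delta'$ lie in the same $\Gamma_1$-orbit, hence represent the same simplex in $\Elle(X)/\Gamma_1$.

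It remains to verify that $\bar{\pi}$ identifies $\Elle(A)/\Gamma_1$ with $\A(A)=\pi(\Elle(A))$. Surjectivity onto $\A(A)$ is immediate from the definition. For the converse, suppose that a $\Gamma_1$-orbit $[\Delta]\subseteq \Elle(X)$ is mapped into $\A(A)$; then $\pi(\Delta)\in\A(A)$, so by Lemma \ref{lem: pi^-1(A(A))=L(A) for admissible pairs} (which applies because $(X,A)$ is admissible) we have $\Delta\in \pi^{-1}(\A(A))=\Elle(A)$, and therefore $[\Delta]\in \Elle(A)/\Gamma_1$. Combining this with the absolute isomorphism established above yields the desired isomorphism of pairs.

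The only delicate point is the injectivity argument for simplices of dimension $\geq 2$, which crucially depends on $1$-coherence of $(\Elle(X),\Elle(A))$ and on the full transitivity statement of Proposition \ref{prop: Gamma acts transitively on simplices}. Everything else is a formal unwinding of the quotient construction, so I expect no additional technical difficulty beyond invoking the results already established in Sections \ref{sec: aspherical multicomplexes of admissible pairs} and \ref{sec: relative bounded cohomology of admissible pairs}.
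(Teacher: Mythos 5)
Your argument is correct and follows essentially the same route as the paper, which deduces the corollary directly from the $1$-coherence of $(\Elle(X),\Elle(A))$ (Lemma \ref{lem: (L(X),L(A)) are 1-coherent for admissible pairs}) together with the orbit characterization of Proposition \ref{prop: Gamma acts transitively on simplices}; you have merely made explicit the factorization of $\pi$ through the quotient and the verification, via Lemma \ref{lem: pi^-1(A(A))=L(A) for admissible pairs}, that the subpair structure matches. No gaps.
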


Let $K$ be a minimal multicomplex. The group $\Gamma_1(K)$ is \emph{not} amenable in general. However, using the fact that higher homotopy groups are abelian, it turns out that the quotients $\Gamma_1(K)/\Gamma_i(K)$ are amenable for every $i\in \N_{\geq 1}$ \cite[Section 3.3]{Gro82}\cite[Corollary 4.20]{FM23}. The same result holds for pairs of multicomplexes.
\begin{lemma}
	\label{lemma: amenability of Gamma(K,L)}
	Let $(K,L)$ be a pair of multicomplexes and assume that both $K$ and $L$ are minimal.
	Then, for every $i\in \N_{\geq 2}$, $\Gamma_{i}(K,L)$ is a normal subgroup of $\Gamma_{i-1}(K,L)$ and the quotient $\Gamma_{i-1}(K,L) / \Gamma_i(K,L)$ is abelian. Therefore $\Gamma_{1}(K,L) / \Gamma_i(K,L)$ is solvable, hence amenable.
\end{lemma}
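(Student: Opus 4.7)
The plan is to reduce the statement to its absolute counterpart, namely \cite[Corollary 4.20]{FM23}, which asserts that for a minimal multicomplex $K$ the subgroup $\Gamma_i(K)$ is normal in $\Gamma_{i-1}(K)$ with abelian quotient. The key observation enabling this reduction is the identity
\[
\Gamma_i(K,L) = \Gamma_{i-1}(K,L) \cap \Gamma_i(K),
\]
where the intersection is taken inside $\Aut(K)$, via the natural inclusion $\Gamma_{i-1}(K,L) \hookrightarrow \Gamma_{i-1}(K)$ obtained by forgetting the pair structure. The inclusion $\subseteq$ is immediate from the definitions. For $\supseteq$, I would check that any $g \in \Gamma_{i-1}(K,L)$ which additionally fixes $K^i$ pointwise is already a simplicial automorphism of the pair $(K,L)$ homotopic to the identity as a map of pairs relative to $K^0$, and therefore lies in $\Gamma_i(K,L)$.

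Once this identity is established, normality is formal: $\Gamma_i(K,L)$ is the preimage of the normal subgroup $\Gamma_i(K) \triangleleft \Gamma_{i-1}(K)$ under $\Gamma_{i-1}(K,L) \hookrightarrow \Gamma_{i-1}(K)$, hence itself normal in $\Gamma_{i-1}(K,L)$. Moreover, the second isomorphism theorem produces an injective homomorphism
\[
\Gamma_{i-1}(K,L)/\Gamma_i(K,L) \hookrightarrow \Gamma_{i-1}(K)/\Gamma_i(K),
\]
and since the right-hand side is abelian by the absolute case, so is the left-hand side. Iterating over $i$ yields a subnormal series $\Gamma_1(K,L) \supseteq \Gamma_2(K,L) \supseteq \dots \supseteq \Gamma_i(K,L)$ with abelian successive quotients, which proves solvability, and hence amenability, of $\Gamma_1(K,L)/\Gamma_i(K,L)$.

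The only non-formal step is the identity $\Gamma_i(K,L) = \Gamma_{i-1}(K,L) \cap \Gamma_i(K)$, which requires nothing more than carefully unpacking the definitions of $\Gamma(K,L)$ and $\Gamma_i(K,L)$; this is where I expect any subtlety to arise, and it is the only point at which the pair structure genuinely enters. If one preferred a self-contained argument, one could instead mimic the construction of \cite[Corollary 4.20]{FM23} in the relative setting: for each $i$-simplex $\Delta$ of $K$ with a chosen vertex $v_\Delta$, the map $\Phi_\Delta \colon \Gamma_{i-1}(K,L) \rightarrow \pi_i(|K|, v_\Delta)$ sending $g$ to the class of the sphere $\dot{S}^i(\Delta, g(\Delta))$ is well defined since $g(\Delta)$ is compatible with $\Delta$; the product $\prod_\Delta \Phi_\Delta$ lands in an abelian group (as $i \geq 2$), its kernel is exactly $\Gamma_i(K,L)$ by minimality of $K$ together with the fact that elements of $\Gamma_{i-1}(K,L)$ fix $K^{i-1}$ pointwise, and this immediately yields both normality and abelianness of the quotient.
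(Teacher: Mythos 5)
Your proof is correct and is essentially the paper's argument: the paper likewise reduces to the absolute case by composing the homomorphism $\phi^{(i)}_K\colon \Gamma_{i-1}(K)\to\prod_\alpha\pi_i(|K|,p_\alpha)$ of \cite{FM23} with the inclusion $\Gamma_{i-1}(K,L)\leq\Gamma_{i-1}(K)$ and identifying the kernel with $\Gamma_i(K,L)$, which is exactly your identity $\Gamma_i(K,L)=\Gamma_{i-1}(K,L)\cap\Gamma_i(K)$ (itself immediate from the definitions, since membership in $\Gamma(K,L)$ is already built into $\Gamma_{i-1}(K,L)$). Your "self-contained" variant via the maps $\Phi_\Delta$ is precisely the construction the paper imports from \cite{FM23}.
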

\begin{proof}
	In \cite{FM23} the authors construct a group homomorphism
	\[
	\phi^{(i)}_{K} \colon \Gamma_{i-1}(K) \rightarrow \prod_{\alpha \in A} \pi_i(|K|,p_\alpha),
	\]
	where $A$ is some index set and $p_\alpha$ is a vertex of $K$, for every $\alpha \in A$.
	Since $i\in \N_{\geq 2}$, the target group is abelian.
	Therefore, by composing with the inclusion $\Gamma_{i-1}(K,L) \leq \Gamma_{i-1}(K)$, we get a group homomorphism
	\[
	\phi^{(i)}_{K,L} \colon \Gamma_{i-1}(K,L) \rightarrow \prod_{\alpha \in A} \pi_i(|K|,p_\alpha).
	\]
	Finally, by the very same argument of \cite[Lemma 4.18]{FM23}, we can deduce that $\ker(\phi^{(i)}_{K,L})=\Gamma_i(K,L)$, and the statement easily follows.
\end{proof}

\subsection{Proof of Theorem \ref{thm: relative mapping theorem, isometric version}}
Let $(X,A)$ be a good pair and let $n\in \N$. We want to show that $H^n_b(X,A)$ is canonically isometrically isomorphic to $H^n_b(\A(X),\A(A))$.
We know from Corollary \ref{cor: projection L(X) to X is a homotopy equivalence of pairs} that the map
\[
S\circ i \colon (|\Elle(X)|,|\Elle(A)|)\rightarrow (X,A)
\]
induces an isometric isomorphism in bounded cohomology. Moreover, by the Relative Isometry Lemma (Proposition \ref{proposition: relative isometry lemma}), also the map
\[
H^n_b(\phi^\bullet) \colon H^n_b(|\Elle(X)|,|\Elle(A)|)\rightarrow H^n_b(\Elle(X),\Elle(A))
\]
is an isometric isomorphism. 

We denote by $\Gamma_i$ the group of simplicial automorphisms $\Gamma_i(\Elle(X),\Elle(A))$, $i\in \N_{\geq 1}$.
Recall from Corollary \ref{cor: aspherical pair is the quotient of the minimal one} that the pair $(\A(X),\A(A))$ is canonically isomorphic to the quotient $(\Elle(X),\Elle(A))/\Gamma_1$. Therefore there is an obvious chain isomorphism between $C^\bullet_b(\A(X),\A(A))$ and $C^\bullet_b(\Elle(X),\Elle(A))^{\Gamma_1}$. Under this identification, the projection
\[
\pi : (\Elle(X),\Elle(A)) \rightarrow (\A(X),\A(A))
\]
induces the inclusion $C^\bullet_b(\Elle(X),\Elle(A))^{\Gamma_1}\hookrightarrow C^\bullet_b(\Elle(X),\Elle(A))$.
Moreover, the elements of $\Gamma_1$ are topologically homotopic to the identity (as maps of pairs), and therefore they are simplicially homotopic to the identity (as maps of pairs) by Proposition \ref{proposition: homotopy lemma for pairs}.
Since the groups $\Gamma_1/\Gamma_i$ are amenable for every $i\in \N_{\geq 1}$ (Proposition \ref{lemma: amenability of Gamma(K,L)}), we deduce from Proposition \ref{proposition: amenable - invariant resolutions} that the projection $\pi$ induces an isometric isomorphism in bounded cohomology, and the statement follows.

\subsection{The role of higher homotopy}
\label{subsec: role of higher homotopy}
We conclude this section by discussing the role of higher homotopy in our setting.
This discussion aims to clarify the connection between our framework and Kuessner's work \cite{Kue15} on relative bounded cohomology via multicomplexes.

In fact, \cite[Proposition 3]{Kue15} states that, for every pair of topological spaces $(X,A)$ and every $n \in \N$, we have that $H^n_b(X,A)$ is isometrically isomorphic to $H^n_b(\A(X),\A(A))$, provided that the map $\pi_1(A\hookrightarrow X,x)$ is injective for every $x \in A$.
This statement is obviously stronger than Theorem \ref{thm: relative mapping theorem, isometric version}, and was used by Kuessner to deduce similar conclusions of our Theorem \ref{thm: additivity full boundary components} without assuming the manifolds to be aspherical.
However, in the proof of \cite[Proposition 3]{Kue15}, Kuessner seems to implicitly exploit some further assumptions, concerning the higher homotopy of the pair $(X,A)$.
In order to address these issues, which were initially highlighted by Moraschini in his PhD thesis \cite[Section 4.4]{Mor18}, we first recall Kuessner's setting.
Unfortunately, our notation differs from Kuessner's one: our multicomplex $\Elle(X)$ is denoted by $\hat K (X)$ in \cite{Kue15} and the aspherical multicomplex $\A(X)$ is denoted by $K(X)$.

We fix a CW-pair $(X,A)$ such that $A$ is $\pi_1$-injective in $X$.
Since $A$ is $\pi_1$-injective in $X$, we can write $\A(A)\subseteq \A(X)$ (Proposition \ref{prop: j_A is a simplicial embedding}). However, without any further assumptions on the higher homotopy of the pair $(X,A)$, we do \emph{not} have in general an inclusion of $\Elle(A)$ inside $\Elle(X)$ (Proposition \ref{prop: when j_Elle is injective}). Let $\pi\colon \Elle(X)\rightarrow \A(X)$ be the quotient map which identifies simplices haring the same 1-skeleton.
We consider the submulticomplex $\pi^{-1}(\A(A))$ of $\Elle(X)$ and we denote by \[\iota\colon \pi^{-1}(\A(A))\hookrightarrow \Elle(X)\] the inclusion map.
In the proof of \cite[Proposition 3]{Kue15} the following statements appear:
\begin{itemize}
	\item[(a)] The multicomplex $\pi^{-1}(\A(A))$ is complete. This is in fact required to invoke the Relative Isometry Lemma \cite[Proposition 1]{Kue15} (see also our Proposition \ref{proposition: relative isometry lemma}).
	\item[(b)] The continuous map $S_X\colon |\Elle(X)|\rightarrow X$ maps $|\pi^{-1}(\A(A))|$ to $A$.
\end{itemize}
We want to show that, if we assume (a) and (b), then $\pi_n(A\hookrightarrow X,x)$ is an isomorphism for every $x \in A$ and every $n\in \N_{\geq 2}$, which is indeed the assumption we are making in our paper.

Let $n \in \N_{\geq 2}$ and let $x \in A$. Let $\Delta$ be an $n$-simplex of $\Elle(A)$ having $x$ as a vertex. We consider the following diagram
\[
\begin{tikzcd}
	\pi_{\Elle(A)}(\Delta) \arrow[d, "\alpha"] \arrow[r, "\Theta_A"]            & {\pi_n(|\Elle(A)|,x)} \arrow[r, "{\pi_n(S_A,x)}"] \arrow[d, "{\pi_n(|j_\Elle|,x)}"] & {\pi_n(A,x)} \arrow[d, "\mathrm{id}"]  \\
	\pi_{p^{-1}(\A(A))}(j_\Elle(\Delta)) \arrow[d, "\beta"] \arrow[r, "\Theta"] & {\pi_n(|\pi^{-1}(\A(A))|,x)} \arrow[d, "{\pi_n(|\iota|,x)}"] \arrow[r]                    & {\pi_n(A,x)} \arrow[d, "{\pi_n(j,x)}"] \\
	\pi_{\Elle(X)}(j_\Elle(\Delta)) \arrow[r, "\Theta_X"]                       & {\pi_n(|\Elle(X)|,x)} \arrow[r, "{\pi_n(S_X,x)}"]                                   & {\pi_n(X,x),}                          
\end{tikzcd}
\]
where we need to make the following clarifications:
\begin{itemize}
	\item The maps $\Theta$, $\Theta_A$ and $\Theta_X$ are defined in Proposition \ref{prop: homotopy groups of complete multicomplexes}. Notice that we have used (a) to define the map $\Theta$, which is surjective. Moreover, since $\Elle(A)$ and $\Elle(X)$ are minimal, then $\Theta_A$ and $\Theta_X$ are bijective.
	\item  Since $S_A$ and $S_X$ are homotopy equivalences \cite[Corollary 3.27]{FM23}, it follows that $\pi_n(S_A,x)$ and $\pi_n(S_X,x)$ are both isomorphisms.
	\item The map $\alpha$ is induced by $j_\Elle$, whose image is clearly contained in $\pi^{-1}(\A(A))$, while $\beta$ denotes just the inclusion of simplices.
	\item By condition (b), every square of the diagram is commutative.
\end{itemize}
By Lemma \ref{lem: (L(X),pi^{-1}A(A)) is 1-coherent}, we have that the pair $(\Elle(X), \pi^{-1}(\A(A)))$ is 1-coherent, hence $n$-coherent (see Definition \ref{definition: i-coherent pair}). It follows that $\beta$ is a bijective map. Therefore, since $\Theta_X\circ \beta = \pi_n(|\iota|,x)\circ \Theta$ is bijective, we can deduce that $\Theta$ is injective (hence bijective). 
Similarly, being $\pi_n(S_A,x)$ an isomorphism, we can deduce that $\pi_n(|j_\Elle|,x)$ is injective, and hence also $\alpha$ is injective. It follows from these observations that $\pi_n(j,x)$ is an isomorphism.

\section{Proof of Theorem \ref{thm: relative bounded cohomology and amenable connected components}}
\label{sec: proof of theorem on amenable connected components}

Let $(X,A)$ be a CW-pair.
In this section, we first show that the subgroup $\Pi_X(A)$ of $\Pi(X,X)$ induces a well-defined action on the pair of multicomplexes $(\A(X),\A_X(A))$. Then, we establish a crucial observation by Gromov (Lemma \ref{lema: a crucial observation}), which is the key step in our proof of Theorem \ref{thm: relative bounded cohomology and amenable connected components}.\\

We consider the group $\Pi(X,X)$, whose elements are given by finite collections of (homotopy classes of) paths in $X$ (see Subsection \ref{subsec: the group Pi(X,X)}).
We consider the subgroup $\Pi_X(A)$ of $\Pi(X,X)$ given by of paths supported on $A$.
We have seen in Section \ref{sec: pair of multicomplexes for pairs of spaces} that there is a well-defined pair of complete, minimal and aspherical multicomplexes $(\A(X),\A_X(A))$. Moreover, we have constructed in Subsection \ref{subsec: the action of Pi(X,X) on A(X)} an action
\[
\psi\colon \Pi(X,X)\rightarrow \Aut(\A(X)),
\]
where $\Pi(X,X)$ acts on $\A(X)$ by conjugating the edges of $\A(X)$ (which correspond to paths in $X$) with the elements of $\Pi(X,X)$.
The action $\Pi(X,X)\acts \A(X)$ does \emph{not} preserve the submulticomplex $\A_X(A)$. However, if we restrict to the subgroup $\Pi_X(A)$, we have that $\psi$ induces a well defined action of pairs
\[
\Pi_X(A)\rightarrow \Aut(\A(X),\A_X(A)).
\]
To this end, we just need to notice that every element of $\Pi_X(A)$ can be represented by a collection of paths $\{\gamma_x\}_{x\in A}$ which are supported on $A$.
It clearly follows from the construction in Subsection \ref{subsec: the action of Pi(X,X) on A(X)} that, for every $g\in \Pi_X(A)$, we have $g\cdot \A_X(A)\subseteq \A_X(A)$, and the action $\Pi_X(A)\acts (\A(X),\A_X(A))$ is indeed well defined.
\begin{prop}
	\label{prop: elements of Pi(A,A) are homotopic to the indentity}
	Let $(X,A)$ be a CW-pair. Let
	\[
	\psi \colon \Pi_X(A) \rightarrow \Aut(\A(X),\A_X(A))
	\]
	be the action described above. Then $\psi(g)$ is simplicially homotopic (as a map of pairs) to the identity for every $g\in \Pi_X(A)$.
\end{prop}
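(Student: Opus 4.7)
The plan is to apply the Homotopy Lemma for pairs (Proposition \ref{proposition: homotopy lemma for pairs}). Both $\A(X)$ and $\A(A)$ are large (since $X$ and $A$ are CW complexes with infinitely many points in each connected component), complete, and minimal, and both $\psi(g)$ and $\id$ are non-degenerate simplicial automorphisms of the pair $(\A(X),\A(A))$. Hence it suffices to exhibit a continuous homotopy of pairs between $|\id|$ and $|\psi(g)|$, i.e.\ a continuous map $H \colon |\A(X)| \times [0,1] \to |\A(X)|$ with $H_0 = \id$, $H_1 = |\psi(g)|$, and $H(|\A(A)| \times [0,1]) \subseteq |\A(A)|$.

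To build $H$, I would first exploit that $g \in \Pi_X(A)$ to choose a representative $\{\gamma_x\}_{x \in X}$ with $\gamma_x$ constant for $x \notin A$ and $\gamma_x$ entirely contained in $A$ for $x \in A$. From the explicit formula for $\psi(g)$ on the $1$-skeleton (Section \ref{sec: the action of Pi_X(A) on (AX,AA)}), this choice guarantees that $\psi(g)|_{\A(A)}$ coincides with the automorphism of $\A(A)$ associated to $g$ viewed as an element of $\Pi(A)$. Applying the absolute version of the statement, proved in \cite[Section 5]{FM23}, to each of $\A(X)$ and $\A(A)$ then yields continuous homotopies $H_X \colon |\A(X)|\times [0,1] \to |\A(X)|$ and $H_A \colon |\A(A)| \times [0,1] \to |\A(A)|$ between the identity and $|\psi(g)|$.

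The remaining step is to combine $H_X$ and $H_A$ into a single homotopy of pairs. Using the homotopy extension property for the CW-pair $(|\A(X)|,|\A(A)|)$, I would extend $H_A$ to a continuous homotopy $\widetilde{H}_A \colon |\A(X)| \times [0,1] \to |\A(X)|$ starting at $\id_{|\A(X)|}$; its endpoint $f$ then agrees with $|\psi(g)|$ on $|\A(A)|$. One concludes by producing a homotopy between $f$ and $|\psi(g)|$ relative to $|\A(A)|$ and concatenating it with $\widetilde{H}_A$. For this final homotopy, I would use asphericity: $|\A(X)|$ and $|\A(A)|$ are aspherical by construction, the inclusion $\pi_1(|\A(A)|) \hookrightarrow \pi_1(|\A(X)|)$ is injective (Proposition \ref{prop: aspherical quotient induces isomorphism on fundamental groups}), and $f,|\psi(g)|$ are both absolutely homotopic to $\id_{|\A(X)|}$ (via $\widetilde{H}_A$ and $H_X$, respectively) and coincide on $|\A(A)|$; the standard obstruction theory for maps into a $K(\pi,1)$ then produces a homotopy rel $|\A(A)|$.

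The main obstacle I expect is precisely this last compatibility step. Admissibility is used essentially here: the higher-dimensional obstructions vanish because $|\A(X)|$ is aspherical, while the degree-one obstruction is controlled by the $\pi_1$-injectivity of the inclusion $|\A(A)| \hookrightarrow |\A(X)|$ combined with the fact that $f$ and $|\psi(g)|$ already agree on $|\A(A)|$. Once $H$ is constructed, Proposition \ref{proposition: homotopy lemma for pairs} promotes it to the desired simplicial homotopy of pairs.
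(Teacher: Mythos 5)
Your overall strategy is viable and ends exactly where the paper's proof ends (the Homotopy Lemma for pairs, Proposition \ref{proposition: homotopy lemma for pairs}), but the topological homotopy of pairs is produced by a different mechanism. The paper first checks, via the argument of \cite[Theorem 5.3]{FM23}, that $\psi(g)$ induces the identity on $\pi_1(|\A(X)|,x_i)$ for a basepoint $x_i$ in each component of $A$, obtains an absolute homotopy to the identity from the asphericity of $|\A(X)|$, and then upgrades it to a homotopy \emph{of pairs} by a uniqueness statement for maps between aspherical pairs (Lemma \ref{lem: pairs of aspherical spaces}), which is where the asphericity of $|\A(A)|$ and the $\pi_1$-injectivity of Proposition \ref{prop: aspherical quotient induces isomorphism on fundamental groups} enter. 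You instead restrict the action to $\A(A)$, apply the absolute result twice, splice the two homotopies via the homotopy extension property, and close the remaining gap with a homotopy rel $|\A(A)|$. Both routes rest on the same $K(\pi,1)$ uniqueness principle, and your final obstruction-theoretic step is essentially the content of the paper's Lemma \ref{lem: pairs of aspherical spaces}.

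That final step is, however, stated too loosely, and as written it has a gap. The facts that $f$ and $|\psi(g)|$ agree on $|\A(A)|$ and that each is \emph{freely} homotopic to $\id_{|\A(X)|}$ do not by themselves kill the degree-one obstruction: two self-maps of an aspherical complex can agree on a subcomplex and be freely homotopic to the identity while inducing homomorphisms of $\pi_1$ (based at a point of that subcomplex) that differ by a non-inner-compatible conjugation, in which case no homotopy rel the subcomplex exists. What actually saves the argument is that the traces at a point $x\in |\A(A)|^0$ of the two free homotopies being compared --- $\widetilde{H}_A$ (hence its extension) and $H_X$ --- are both homotopic in $|\A(X)|$ to the path $\gamma_x$, which is supported in $A$; consequently $f$ and $|\psi(g)|$ induce the same homomorphism $\pi_1(|\A(X)|,x)\rightarrow \pi_1(|\A(X)|,\psi(g)(x))$, the degree-one obstruction vanishes, and the higher obstructions vanish by asphericity. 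You should make this trace comparison explicit: it, rather than the $\pi_1$-injectivity of the inclusion $|\A(A)|\hookrightarrow |\A(X)|$, is what controls the obstruction in degree one.
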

\begin{proof}
	For every $x\in A=\A_X(A)^0$, we know that $\psi(g)$ acts as the identity on both $\pi_1(|\A(X)|,x)$ and $\pi_1(|\A_X(A)|,x)$ \cite[Theorem 5.3]{FM23}.
	Since $|\A(X)|$ and $|\A_X(A)|$ are aspherical (Lemma \ref{lemma: homotopy tyoe  pe of A_X(A)}), this implies that the maps
	\[
	|\A(X)|\rightarrow |\A(X)|, \qquad |\A_X(A)|\rightarrow |\A_X(A)|,
	\]
	induced by $\psi(g)$ are both homotopic to the identity \cite[Proposition 1B.9]{Hat}.
	Since the inclusion $|\A_X(A)|\hookrightarrow |\A(X)|$ is a cofibration, this is indeed enough to conclude that $|\psi(g)|$ is homotopic to the identity \emph{as a map of pairs} \cite[7.4.2]{Bro06}.
	Finally the Homotopy Lemma for pairs (Proposition \ref{proposition: homotopy lemma for pairs}) implies that $\psi(g)$ is simplicially homotopic to the identity as a map of pairs.
\end{proof}
As already suggested by Gromov \cite[pag. 57]{Gro82}, the following simple observation is crucial.
\begin{lemma}{\cite[Observation 1]{Kue15}}
	\label{lema: a crucial observation}
	Let $(X,A)$ be a CW-pair and let $G = \Pi_X(A)$. Let $n\in \N$ and let $z \in C^n_b(\A(X))^G_{\alt}$ be a $G$-invariant alternating cochain. Let $(\Delta,(x_0,\dots,x_n))$ be an algebraic simplex of $\A(X)$ and assume that $\Delta$ has at least one edge in $\A_X(A)$. Then $z(\Delta,(x_0,\dots,x_n))=0$.
\end{lemma}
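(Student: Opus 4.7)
The plan is to produce, for each non-degenerate algebraic simplex, an element of $G$ which preserves $\sigma$ but acts on the tuple by swapping the two endpoints of $e$, so that $G$-invariance combined with alternation forces $z$ to vanish. The degenerate case (repetitions in the tuple) will be handled by alternation alone.

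First I would observe that if some $x_a = x_b$ with $a \neq b$, then the alternating property applied to the transposition swapping the two equal entries immediately gives $z(\sigma,(x_0,\dots,x_n))=0$. Hence I may assume that $\sigma$ is a non-degenerate $n$-simplex with pairwise distinct vertices $x_0,\dots,x_n$, and that the edge $e \in \A(A)$ joins two of these vertices, say $x_i$ and $x_j$ with $i \neq j$. Since 1-simplices are unaffected by the aspherical quotient, Remark \ref{rmk: alternative characterization of L(X)} lets me choose a path $\gamma\colon [0,1]\to A$ representing $e$ from $x_i$ to $x_j$. I then define $g\in \Pi_X(A)$ by the family of paths with $\gamma_{x_i}=\gamma$, $\gamma_{x_j}=\bar\gamma$, and constant paths elsewhere; since every non-constant path lies in $A$, this element really comes from $\Pi(A)$ and hence belongs to $\Pi_X(A)$.

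The heart of the argument is showing that $g\cdot \sigma = \sigma$ as simplices of $\A(X)$. Since $\A(X)$ is the aspherical quotient (so two simplices sharing the same $1$-skeleton coincide), it suffices to verify that $g$ sends every edge of $\sigma$ to an edge of $\sigma$. I would split into cases using the formula for the action on edges from Section \ref{sec: the action of Pi_X(A) on (AX,AA)}: edges of $\sigma$ disjoint from $\{x_i,x_j\}$ are fixed outright; for the edge $e$ itself one computes $\bar\gamma * \gamma_e * \bar\gamma \simeq \bar\gamma$, which represents the same unordered $1$-simplex $e_{ij}$; and for an edge $e_{ib}$ of $\sigma$ with $b\notin\{i,j\}$, the representative $\bar\gamma * \gamma_{e_{ib}}$ of $g\cdot e_{ib}$ is homotopic rel endpoints to $\gamma_{e_{jb}}$ precisely because the boundary of the $2$-face of $\sigma$ with vertices $\{x_i,x_j,x_b\}$ is null-homotopic in $|\A(X)|$, giving the relation $[e_{ib}]=[\gamma]\cdot [e_{jb}]$ in $\pi_1$; thus $g\cdot e_{ib}=e_{jb}$, and symmetrically for $e_{ja}$.

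With $g\cdot \sigma = \sigma$ in hand, $G$-invariance together with the fact that $g$ permutes the vertex tuple by the transposition $\tau=(i\;j)$ yields
\[
z(\sigma,(x_0,\dots,x_n)) \;=\; z\bigl(g\cdot (\sigma,(x_0,\dots,x_n))\bigr) \;=\; z(\sigma,(x_{\tau(0)},\dots,x_{\tau(n)})),
\]
while the alternating property turns the right-hand side into $-z(\sigma,(x_0,\dots,x_n))$, forcing the desired vanishing. I expect the main obstacle to be the identification $g\cdot \sigma=\sigma$: one must argue uniformly over all edges of $\sigma$ and not merely the distinguished edge $e$, and it is precisely here that the asphericity of $\A(X)$ and the $\pi_1$-relations read off from the $2$-faces of $\sigma$ are essential.
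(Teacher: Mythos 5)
Your proposal is correct and follows essentially the same route as the paper: reduce to distinct vertices by alternation, build $g=\{\gamma,\bar\gamma\}\in\Pi_X(A)$ from a path representing the edge in $\A(A)$, and combine $G$-invariance with alternation to force $z=0$. The only difference is that the paper delegates the key identity $g\cdot\sigma=\sigma$ (and the induced transposition of the vertex tuple) to \cite[Section 6.3]{FM23}, whereas you verify it directly via the $2$-faces of $\sigma$ and the definition of the aspherical quotient -- a correct and self-contained substitute for that citation.
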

\begin{proof}
	If $x_h = x_k$ for some $h\neq k$, then $z(\Delta,(x_0,\dots,x_n))=0$, since $z$ is alternating. Therefore, we can assume that $x_h\neq x_k$ for every $h\neq k$. Let $e$ be an edge of $\Delta$ contained in $\A_X(A)^1$ and let, without loss of generality, $x_0$ and $x_1$ be its endpoints. We know that $e$ represents a homotopy class (relative to endpoints) of paths in $A$ joining $x_0$ and $x_1$. Let $\gamma\colon [0,1]\rightarrow A$ be a representative of $e$ and let $g=\{\gamma, \bar{\gamma} \}\in \Pi_X(A)$. It is clear that $g\cdot \Delta = \Delta$, $g\cdot x_0 = x_1$, $g\cdot x_1 = x_0$ and $g\cdot x_l = x_l$ for every $l \in \{2,\dots, n \}$. Since $z$ is alternating and $G$-invariant, we get that
	\[
	z(\Delta, (x_0,\dots,x_n)) = -z(g\cdot (\Delta, (x_0,\dots,x_n))) = - z(\Delta, (x_0,\dots,x_n)),
	\]
	hence $z(\Delta,(x_0,\dots,x_n))=0$.
\end{proof}

\subsection{Proof of Theorem \ref{thm: relative bounded cohomology and amenable connected components}}
	Let $(X,Y)$ be a good pair of topological spaces and let $A\subseteq Y$ be the union of some connected components of $Y$. Let $B=Y\setminus A$ be the union of the remaining connected components and let $j\colon (X,B)\rightarrow (X,Y)$ denote the inclusion.
	Assume that every connected component of $A$ has an amenable fundamental group.
	We want to show that the map 
	\[
	H^n_b(j)\colon H^n_b(X,Y)\rightarrow H^n_b(X,B)
	\]
	is an isometric isomorphism for every $n \in \N$.
	We consider the following commutative diagram
	\[
	\begin{tikzcd}
		{H^n_b(X,Y)} \arrow[r, "H^n_b(j)"]                                & {H^n_b(X,B)}                                     \\
		{H^n_b(\A(X),\A(Y))} \arrow[u] \arrow[r]                   & {H^n_b(\A(X),\A(B))} \arrow[u]                   \\
		{H^n(C^\bullet_b(\A(X),\A(Y))^G_\alt)} \arrow[u] \arrow[r] & {H^n(C^\bullet_b(\A(X),\A(B))^G_\alt),} \arrow[u]
	\end{tikzcd}
	\]
	where the following considerations need to be made:
	\begin{itemize}
		\item The horizontal arrows are induced by inclusions.
		\item Since the pair $(X,Y)$ is good, also the pairs $(X,A)$ and $(X,B)$ are good, hence $\A_X(Y)\cong \A(Y)$ and $\A_X(B)\cong \A(B)$ by Proposition \ref{prop: j_A is a simplicial embedding}. The upper vertical arrows denote the isometric isomorphisms of Theorem \ref{thm: relative mapping theorem, isometric version}.
		\item By Lemma \ref{lem: Pi(A,A) is amenable}, $G=\Pi_X(A)$ is an amenable group.
		Since $A$ is a collection of path connected components of $Y$, one easily checks that the action $G\acts \A(X)$ induces well defined actions $G\acts (\A(X),\A(Y))$ and $G\acts (\A(X),\A(B))$. These actions consist of elements that are simplicially homotopic to the identity as maps of pairs (Proposition \ref{prop: elements of Pi(A,A) are homotopic to the indentity}). It follows from Proposition \ref{proposition: amenable - invariant resolutions} that the lower vertical arrows are isometric isomorphisms.
	\end{itemize}
	In order to conclude, it is sufficient to show that the map
	\[
	j^n \colon C^n_b(\A(X),\A(Y))^G_\alt\rightarrow C^n_b(\A(X),\A(B))^G_\alt
	\]
	induces an isometric isomorphism in cohomology. But since every bounded alternating $G$-invariant cochain vanishes on simplices supported in $\A(A)\subseteq \A(Y)$ (Lemma \ref{lema: a crucial observation}), it follows that $j^n$ is bijective. Thus $j^n$ itself is an isometric isomorphism and the statement follows.

\section{Mapping Cones}
\label{sec: mapping cones}
In this section we introduce the machinery of mapping cones. 
The technique was introduced by Park in \cite{Par03} and played a fundamental role in several developments of bounded cohomology theory \cite{Loh08, BBF+14}.\\

Let $(M^\bullet,\delta_M^\bullet)$ and $(N^\bullet,\delta_N^\bullet)$ be normed cochain complexes and let $\varepsilon^\bullet\colon M^\bullet\rightarrow N^\bullet$ be a chain map between them.
For every $n \in \N$ we set
\[
\Cone(\varepsilon)^n = M^n \oplus N^{n-1}
\]
and we define a boundary operator $\delta_C^n\colon \Cone(\varepsilon)^n \rightarrow \Cone(\varepsilon)^{n+1}$ by setting
\[
\delta_C^n(u,v)=(\delta^n_M(u), -\varepsilon^n(u)-\delta^{n-1}_N(v)).
\]
The complex $(\Cone(\varepsilon)^\bullet,\delta_C^\bullet)$ is called the \emph{mapping cone complex} associated to the chain map $\varepsilon^\bullet$.
We equip $\Cone^n(\varepsilon)$ with the norm
\[
\|(u,v)\|=\max \left\{ \|u\|, \|v\| \right\},
\]
which induces a seminorm on the cohomology module $H^n(\Cone(\varepsilon)^\bullet)$.
We observe that every commutative diagram of chain maps
\begin{equation}
	\label{eq: commuttaive diagram chain maps}
	\begin{tikzcd}
		\overline{M}^\bullet \arrow[r, "\bar{\varepsilon}^\bullet"] \arrow[d, "\eta_M^\bullet"] & \overline{N}^\bullet \arrow[d, "\eta_N^\bullet"] \\
		M^\bullet \arrow[r, "\varepsilon^\bullet"]                                               & N^\bullet                                        
	\end{tikzcd}
\end{equation}
induces a chain map
\[
\eta^n \colon \Cone(\bar{\varepsilon})^n \rightarrow \Cone(\varepsilon)^n, \qquad (u,v)\mapsto (\eta^n_M(u), \eta^{n-1}_N(v)).
\]
Let us denote by $\Sigma N^\bullet$ the \emph{suspension} of $N^\bullet$, i.e. the complex obtained by setting $(\Sigma N)^n=N^{n-1}$, and consider the short exact sequence of complexes
\[
0\rightarrow\Sigma N^\bullet \overset{\iota}{\rightarrow} \Cone(\varepsilon)^\bullet \overset{\xi}{\rightarrow} M^\bullet \rightarrow 0,
\]
where $\iota(v)=(0,v)$ and $\xi(u,v)=u$.
Of course, we have $H^n(\Sigma N^\bullet)=H^{n-1}(N^\bullet)$, therefore the corresponding long exact sequence is given by 
\[
\dots \rightarrow H^n(\Cone(\varepsilon)^\bullet)\rightarrow H^n(M^\bullet)\rightarrow H^n(N^\bullet) \rightarrow H^{n+1}(\Cone(\varepsilon)^\bullet) \rightarrow \dots
\]
By the naturality of the long exact sequence, the following diagram
\[
\begin{tikzcd}
	H^{n-1}(\overline{M}^\bullet) \arrow[r] \arrow[d, "H^{n-1}(\eta_M^\bullet)"] & H^{n-1}(\overline{N}^\bullet) \arrow[r] \arrow[d, "H^{n-1}(\eta_N^\bullet)"] & H^n(\Cone(\bar{\varepsilon})^\bullet) \arrow[r] \arrow[d, "H^n(\eta^\bullet)"] & H^{n}(\overline{M}^\bullet) \arrow[r] \arrow[d, "H^{n}(\eta_M^\bullet)"] & H^{n}(\overline{N}^\bullet) \arrow[d, "H^{n}(\eta_N^\bullet)"] \\
	H^{n-1}(M^\bullet) \arrow[r]                                                  & H^{n-1}(N^\bullet) \arrow[r]                                                  & H^n(\Cone(\varepsilon)^\bullet) \arrow[r]                                       & H^{n}(M^\bullet) \arrow[r]                                                & H^{n}(N^\bullet)                                               
\end{tikzcd}
\]
is commutative and has exact rows.
Therefore, by the Five Lemma, we know that if $\eta_M^\bullet$ and $\eta_N^\bullet$ induce isomorphisms in cohomology, then also $\eta^\bullet$ does. 
In order to retain further control over the norms we need to make the maps at the level of cochains as explicit as possible. 
For this reason we introduce the following property.
\begin{defn}
	\label{defn: C_n-chain homotopy inverse}
	Let $(K_n)$, $n \in \N_{\geq 1}$, be a sequence of positive real numbers and let $\eta_M^\bullet \colon \overline{M}^\bullet \rightarrow M^\bullet$ be a norm non-increasing chain map.
	We say that $\eta_M^\bullet$ has a \emph{$(K_n)$-chain homotopy inverse} if there exists a norm non-increasing chain map $A^\bullet_M \colon M^\bullet \rightarrow \overline{M}^\bullet$ such that the following conditions hold:
	\begin{itemize}
		\item[(i)] $A_M^\bullet \circ \eta^\bullet_M=\id$;
		\item[(ii)] The composition $\eta_M^\bullet \circ A^\bullet_M$ is chain homotopic to the identity via an algebraic homotopy $T^\bullet \colon M^\bullet \rightarrow M^{\bullet-1}$ such that $\|T^n\|\leq K_n$ for every $n \in \N_{\geq 1}$. 
		In particular, for every $u \in M^n$ such that $\delta_M^n(u)=0$, there exists $u'\in M^{n-1}$ such that 
		\[
		\eta^n_M\circ A^n_M(u)-u = \delta^{n-1}_M(u'), 	\qquad \|u'\|\leq K_n\cdot \|u\|.
		\]
	\end{itemize}
\end{defn}
Let $G$ be an amenable group acting on a multicomplex $K$ by automorphisms which are simplicially homotopic to the identity. It follows from Proposition \ref{proposition: amenable - invariant resolutions} that the inclusion $C^\bullet_b(K)^G\hookrightarrow C^\bullet_b(K)$ of invariant cochains admits a $(C_n)$-chain homotopy inverse, where the sequence $(C_n)$ is introduced in Definition \ref{defn: constants C_n}.
Another important example of a chain map admitting a $(C_n)$-chain homotopy inverse is given by the map $C^\bullet_b(\K(X))\rightarrow C^\bullet_b(\Elle(X))$ induced by the inclusion $\Elle(X)\hookrightarrow \K(X)$ for every CW-complex $X$ (see Lemma \ref{lem: r_X has C_n chain homotopy inverse} below).
These examples play a fundamental role in our proof of Theorem \ref{thm: relative mapping theorem, biLipschitz version}.
\begin{lemma}
	\label{lem: les and five lemma for mapping cones}
	Consider the commutative diagram (\ref{eq: commuttaive diagram chain maps}) and assume that $\eta_M^\bullet$, $\eta_N^\bullet$, $\varepsilon^\bullet$ and $\bar{\varepsilon}^\bullet$ are norm non-increasing.
	If there exists a sequence $(K_n)$, $n \in \N_{\geq 1}$, of positive real numbers such that the following conditions hold:
	\begin{enumerate}
		\item[$(1)$] $\eta_M^\bullet$ has a $(K_n)$-chain homotopy inverse $A_M^\bullet$;
		\item[$(2)$] $\eta_N^\bullet$ has a $(K_n)$-chain homotopy inverse $A_N^\bullet$;
	\end{enumerate}
	then $\eta^\bullet$ induces a bi-Lipschitz isomorphism in cohomology such that
	\[
	(2 K_n)^{-1}\cdot \|\alpha \| \leq \|H^n(\eta^\bullet)(\alpha)\| \leq \|\alpha \|,
	\]
	for every $n \in \N$ and every $\alpha \in H^n(\Cone(\bar{\varepsilon})^\bullet)$.
\end{lemma}
\begin{proof}
	Conditions (1) and (2) clearly imply that $\eta^\bullet_M$ and $\eta^\bullet_N$ induce isometric isomorphisms in cohomology.
	Moreover, if $\eta^\bullet_M$ and $\eta^\bullet_N$ are norm non-increasing, then also $\eta^\bullet$ is norm non-increasing, hence also the induced isomorphism in cohomology is so.
	In order to retain further control over the norms, we proceed as follows.
	We take a class in $H^n(\Cone(\varepsilon)^\bullet)$ and we consider a cocycle $(u,v)\in \Cone(\varepsilon)^n$ representing it. 
	Since $(u,v)$ is a cocycle, by the definition of the differential of $\Cone(\varepsilon)^\bullet$, we have that
	\[
	\delta_M^n(u)= 0, \qquad \varepsilon^n(u)+\delta^{n-1}_N(v)=0.
	\]
	Since $\eta^\bullet_M$ has a $(K_n)$-chain homotopy inverse, there exists $u'\in M^{n-1}$ such that 
	\[
	\eta^n_M\circ A^n_M(u)-u - \delta^{n-1}_M(u') = 0, \qquad \|u'\|\leq K_n\cdot \|u\|.
	\]
	By applying $\varepsilon^n$ and $A^n_N$ on both sides of the equality, we get
	\begin{align*}
		0 & = \varepsilon^n \circ \eta^n_M\circ A^n_M(u)-\varepsilon^n(u) - \varepsilon^n \circ \delta^{n-1}_M(u')\\
		0 & = \eta^n_N \circ \bar{\varepsilon}^n \circ A^n_M(u) + \delta^{n-1}_N(v) - \delta^{n-1}_N \circ \varepsilon^{n-1}(u') \\
		0 & = A^n_N\circ \eta^n_N \circ \bar{\varepsilon}^n \circ A^n_M(u) + A^n_N\circ \delta^{n-1}_N(v) - A^n_N\circ \delta^{n-1}_N \circ \varepsilon^{n-1}(u') \\
		0 & = \bar{\varepsilon}^n \circ A^n_M(u) + \delta^{n-1}_{\overline{N}} \circ A^{n-1}_N (v) - \delta^{n-1}_{\overline{N}} \circ A^{n-1}_N \circ \varepsilon^{n-1}(u') \\
		0 & = \bar{\varepsilon}^n \circ A^n_M(u) + \delta^{n-1}_{\overline{N}} \circ A^{n-1}_N (v-\varepsilon^{n-1}(u')),
	\end{align*}
	where $\delta^{n-1}_{\overline{N}}$ denotes the differential of the cochain complex $\overline{N}^\bullet$.
	It readily follows that 
	\[
	(z,w)=(A^n_M(u), A^{n-1}_N(v- \varepsilon^{n-1}(u'))) \in \Cone(\bar{\varepsilon})^n
	\]
	is a cocycle.
	Moreover, we have that
	\begin{align*}
		& \eta^n(z,w) - (u,v) \\
		& =  (\eta^n_M\circ A^n_M(u), \eta^{n-1}_N \circ A^{n-1}_N(v-\varepsilon^{n-1}(u'))) - (u,v) \\
		& = (\delta_M^{n-1}(u'), \eta^{n-1}_N \circ A^{n-1}_N(v-\varepsilon^{n-1}(u'))-(v - \varepsilon^{n-1}(u'))- \varepsilon^{n-1}(u')) \\
		& = (0, x) + \delta_C^{n-1}(u',0),
	\end{align*}
	where $x= \eta^{n-1}_N \circ A^{n-1}_N(v-\varepsilon^{n-1}(u'))-(v - \varepsilon^{n-1}(u'))$. Since both $\eta^n(z,w)$ and $(u,v)$ are cocycles, it follows that also $(0,x)$ is a cocycle, hence $\delta_N^{n-1}(x)=0$.
	Since $\eta^{\bullet}_N$ has a $(K_{n})$-chain homotopy inverse, there exists $x'\in N^{n-1}$ such that
	\[
	\eta_N^{n-1}\circ A_N^{n-1}(x) - x = \delta_N^{n-2}(x').
	\]
	It is easy to check that $\eta_N^{n-1}\circ A_N^{n-1}(x)=0$, hence $x = -\delta_N^{n-2}(x')$.
	It follows that
	\begin{align*}
		\eta^n(z,w) - (u,v)& =  (0,x) + \delta_C^{n-1}(u',0) \\
		& = \delta_C^{n-1}(0,x') +  \delta_C^{n-1}(u',0) = \delta_C^{n-1}(u',x'),
	\end{align*}
	which implies that $\eta^n(z,w)$ and $(u,v)$ represent the same class in cohomology.
	In conclusion, from the following inequalities
	\begin{align*}
		\|(z,w)\| & = \max \left\{\|z\|,\|w\| \right\} \\
		& = \max \left\{\|A^n_M(u) \|,\|A^{n-1}_N(\varepsilon^{n-1}(u')-v)\| \right\} \\
		& \leq \max \left\{\|u \|,\|\alpha^{n-1}(u')-v\| \right\} \\
		& \leq \max \left\{\|u \|,\|u'\| + \|v\| \right\} \\
		& \leq \max \left\{\|u \|,K_n \cdot \|u\| + \|v\| \right\} \\
		& = K_n \cdot \|u\| + \|v\| \\
		& \leq K_n\cdot (\|u\| + \|v\|)\\
		& \leq 2 K_n \cdot \max \left\{\|u\|,\|v\| \right\} = 2 K_n\cdot \|(u,v)\|,
	\end{align*}
	we can deduce the estimates over the norms in the statement.
\end{proof}

\begin{rem}
	\label{rem: averaging is not relative}
	In the assumptions of Lemma \ref{lem: les and five lemma for mapping cones} we are \emph{not} requiring that $A^\bullet_M$ and $A^\bullet_N$ commute with the chain maps $\varepsilon^\bullet$ and $\bar{\varepsilon}^\bullet$, i.e. $A^\bullet_N \circ \varepsilon^\bullet = \bar{\varepsilon}^\bullet \circ A^\bullet_M$. If this were the case, it would be easy to exhibit a norm non-increasing chain map which is a left inverse of $\eta^\bullet$, thus obtaining that $\eta^\bullet$ induces an isometric isomorphism in cohomology.
\end{rem}
\subsection{Mapping cones of pairs of multicomplexes}
\label{subsec: mapping cones of pairs}
Let $f \colon L\rightarrow K$ be a simplicial map between multicomplexes.
We denote by 
\[
C^\bullet_b(f\colon L\rightarrow K)
\]
the mapping cone complex associated to the cochain map $C^\bullet_b(K)\rightarrow C^\bullet_b(L)$ induced by $f$, and we denote by $H^\bullet_b(f\colon L\rightarrow K)$ its cohomology.
One can use the very same argument in \cite[Theorem 3.19]{Par03} to prove the following.
\begin{lemma}
	\label{lem: beta induces bilipschitz isomorphism}
	Let $(K,L)$ be a pair of multicomplexes, and let $L\hookrightarrow K$ be the inclusion map.
	The chain map 
	\[
	\beta^n\colon C^n_b(K,L)\rightarrow C^n_b(L \hookrightarrow K), \qquad \beta^n(u)=(u,0),
	\]
	induces, for every $n \in\N$, a bi-Lipschitz isomorphism of vector spaces
	\[
	H^n(\beta^\bullet)\colon H^n_b(K,L)\rightarrow H^n_b(L \hookrightarrow K),
	\]
	such that, for every $\alpha \in H^n_b(K,L)$,
	\[
	(n+2)^{-1}\cdot \|\alpha \|_\infty \leq \| H^n_b(\beta^\bullet)(\alpha)\|_\infty \leq \|\alpha\|_\infty.
	\]
\end{lemma}

\section{Proof of Theorem \ref{thm: relative mapping theorem, biLipschitz version}}
\label{sec: proof of mapping theorem, bilipschitz}
Let $(X,A)$ be a CW-pair such that the kernel of the morphism $\pi_1(A\hookrightarrow X, x)$ is amenable for every $x \in A$.
Unlike in Section \ref{sec: proof of mapping theorem, isometric}, we are \emph{not} requiring the pair $(X,A)$ to be good.
In Section \ref{sec: pair of multicomplexes for pairs of spaces} we have constructed a well-defined pair of aspherical multicomplexes $(\A(X),\A_X(A))$.
Let $n \in \N$. Since the path-connected components of $\A(X)$ (resp. $\A_X(A)$) bijectively correspond to the path-connected components of $X$ (resp. $A$), the statement of Theorem \ref{thm: relative mapping theorem, biLipschitz version} is trivially true for $n=0$. We assume therefore that $n \in \N_{\geq 1}$.
We want to show that there is a bi-Lipschitz isomorphism of vector spaces
\[
\Psi^n\colon H^n_b(\A(X),\A_X(A))\rightarrow H^n_b(X,A),
\]
such that, for every $\alpha \in H^n_b(\A(X),\A_X(A))$,
\[
(2C_n)^{-2}(n+2)^{-1} \cdot \|\alpha \|_\infty \leq \| \Psi^n(\alpha)\|_\infty \leq 2C_n(n+2)\cdot \|\alpha\|_\infty,
\]
where the constant $C_n$ denotes the number of $n$-simplices in the natural structure of multicomplex of the $n$-dimensional prism $\Delta^{n-1}\times [0,1]$ (see Definition \ref{defn: constants C_n}).
Recall from Section \ref{sec: pair of multicomplexes for pairs of spaces} that we have the following commutative diagram of simplicial maps
\begin{equation}
	\label{eq: diagram simplicial maps for pairs}
	\begin{tikzcd}
		\K(A) \arrow[d, "j_\K", hook] & \Elle(A) \arrow[l, "i_A"', hook'] \arrow[d, "j_\Elle"] \arrow[r, "\pi"] & \A(A) \arrow[d, "j_\A"] \arrow[r, "j_\A"] & \A_X(A) \arrow[d, hook] \\
		\K(X)                         & \Elle(X) \arrow[l, "i_X"', hook'] \arrow[r, "\pi"]                      & \A(X) \arrow[r, "\id"]                 & \A(X).                  
	\end{tikzcd}
\end{equation}
Our strategy to prove Theorem \ref{thm: relative mapping theorem, biLipschitz version} is to pass through mapping cones and invoke Lemma \ref{lem: les and five lemma for mapping cones} for every square of diagram (\ref{eq: diagram simplicial maps for pairs}).
To this end, we first show that the maps $i_X$, $i_A$, $\pi$ and $j_\A$ have $(C_n)$-chain homotopy inverses (see Definition \ref{defn: C_n-chain homotopy inverse}).
\begin{lemma}
	\label{lem: r_X has C_n chain homotopy inverse}
	Let $X$ be a CW-complex. Then the map induced by $i_X\colon \Elle(X)\hookrightarrow \K(X)$ on bounded cochains has a $(C_n)$-chain homotopy inverse.
\end{lemma}
\begin{proof}
	We know that there exists a simplicial retraction $r_X\colon \K(X)\rightarrow \Elle(X)$ such that $r_X\circ i_X$ is the identity of $\Elle(X)$ \cite[Theorem 3.23]{FM23}. 
	We claim that the chain map $r_X^\bullet \colon C^\bullet_b(\Elle(X))\rightarrow C^\bullet_b(\K(X))$ induced by $r_X$ provides a $(C_n)$-chain homotopy inverse of $i_X^\bullet$.
	In fact, the geometric realization of $r_X$ realizes $|\Elle(X)|$ as a strong deformation retract of $|\K(X)|$. 
	Since $\K(X)$ is large and complete, it follows from the Homotopy Lemma \cite[Lemma 3.17]{FM23} that $i_X \circ r_X$ is simplicially homotopic to the identity.
	Therefore, by Remark \ref{rem: algebraic homotopy is bounded}, there exists an algebraic homotopy $T^\bullet\colon C^\bullet_b(\K(X))\rightarrow C^{\bullet-1}_b(\K(X))$ between $r_X^\bullet \circ i_X^\bullet$ and the identity such that $\|T^n\|\leq C_n$, for every $n \in \N$.
\end{proof}
\begin{lemma}
	\label{lem: pi has C_n chain homotopy inverse}
	Let $X$ be a CW-complex. Then the map induced by $\pi\colon \Elle(X)\rightarrow \A(X)$ on bounded cochains has a $(C_n)$-chain homotopy inverse.
\end{lemma}
\begin{proof}
	By \cite[Corollary 4.15]{FM23}, we know that $\A(X)=\Elle(X)/\Gamma_1$, where $\Gamma_1$ denotes the group $\Gamma_1(\Elle(X))$ (see Definition \ref{definition: group of simplicial automorphisms}). It follows that the projection $\pi\colon \Elle(X)\rightarrow \A(X)$ induces the inclusion
	\[
	\iota^\bullet \colon C^\bullet_b(\Elle(X))^{\Gamma_1}\hookrightarrow C^\bullet_b(\Elle(X))
	\]
	of invariant cochains. We know that the groups $\Gamma_1/\Gamma_m$, $m \in \N$, are amenable (Lemma \ref{lemma: amenability of Gamma(K,L)}). 
	Moreover $\Gamma_1$ acts by definition on $\Elle(X)$ by automorphisms which are simplicially homotopic to the identity.
	In conclusion, we obtain by Proposition \ref{proposition: amenable - invariant resolutions} that $\iota^\bullet$ admits a $(C_n)$-chain homotopy inverse.
\end{proof}
\begin{lemma}
	\label{lem: j_A has C_n chain homotopy inverse}
	Let $(X,A)$ be a CW-pair such that the kernel of the morphism $\pi_1(A\hookrightarrow X, x)$ is amenable for every $x \in A$. Then the map induced by $j_\A\colon \A(A)\rightarrow \A_X(A)$ on bounded cochains has a $(C_n)$-chain homotopy inverse.
\end{lemma}
\begin{proof}
	By Lemma \ref{lemma: homotopy tyoe  pe of A_X(A)}, we know that the projection map $j_\A$ can be described as the quotient of $\A(A)$ by the action of the group
	\[
	\hat{N}= \bigoplus_{x \in A} N_x \leq \Pi(A,A).
	\]
	Here $N_x$ denotes the kernel of the morphism $\pi_1(A\hookrightarrow X,x)$, which is amenable by assumption. Since direct products of amenable groups is amenable, we have that $\hat{N}$ is amenable. Moreover, we know that the action $\hat{N}\acts \A(A)$ is by automorphisms which are simplicially homotopic to the identity \cite[Theorem 5.3]{FM23}.
	Hence we conclude by Proposition \ref{proposition: amenable - invariant resolutions}.
\end{proof}
We proceed with the proof of Theorem \ref{thm: relative mapping theorem, biLipschitz version}.
We know that the projection $S\colon (|\K(X)|,|\K(A)|)\rightarrow (X,A)$ induces a homotopy equivalence of pairs (Proposition \ref{prop: S_X:K(X) to X induces homotopy equivalence of pairs}), hence an isometric isomorphism 
\[
H^n_b(X,A)\rightarrow H^n_b(|\K(X)|,|\K(A)|).
\]
Moreover, by the Relative Isometry Lemma (Proposition \ref{proposition: relative isometry lemma}) we know that the natural chain inclusion $\varphi_\bullet\colon C_\bullet(\K(X))\rightarrow C_\bullet(|\K(X)|)$ induces an isometric isomorphism
\[
H^n_b(|\K(X)|,|\K(A)|) \rightarrow H^n_b(\K(X),\K(A)).
\]
By Lemma \ref{lem: beta induces bilipschitz isomorphism}, there is a bi-Lipschitz isomorphism of vector spaces
\[
H^n_b(\K(X),\K(A))\rightarrow H^n_b(j_\K\colon \K(A)\hookrightarrow \K(X)).
\]
By Lemma \ref{lem: r_X has C_n chain homotopy inverse}, Lemma \ref{lem: j_A has C_n chain homotopy inverse} and Lemma \ref{lem: pi has C_n chain homotopy inverse} above, we can invoke Lemma \ref{lem: les and five lemma for mapping cones} for every square of diagram (\ref{eq: diagram simplicial maps for pairs}). Therefore, we get the following bi-Lipschitz isomorphisms of vector spaces
\begin{align*}
	H^n_b(j_\K\colon\K(A)\hookrightarrow \K(X)) & \rightarrow H^n_b(j_\Elle\colon \Elle(A)\rightarrow \Elle(X)), \\
	H^n_b(j_\A \colon \A(\A)\rightarrow \A(X)) & \rightarrow H^n_b(j_\Elle\colon \Elle(A)\rightarrow \Elle(X)), \\
	H^n_b(\A_X(A) \hookrightarrow \A(X)) & \label{eq: equivalence theorem, second losing norm} \rightarrow H^n_b(j_\A \colon \A(\A)\rightarrow \A(X)). 
\end{align*}
In conclusion, since that pair of multicomplexes $(\A(X),\A_X(A))$ is well-defined, by Lemma \ref{lem: beta induces bilipschitz isomorphism}, we have a bi-Lipschitz isomorphism
\[
H^n_b(\A(X),\A_X(A)) \rightarrow H^n_b(\A_X(A)\hookrightarrow \A(X)).
\]
By carefully keeping track of the norms, we get the estimates in the statement.
\begin{rem}
	\label{rem: not optimality of constant}
	The bi-Lipschitz constants appearing in Theorem \ref{thm: relative mapping theorem, biLipschitz version} are far from being optimal. For example, if $A$ is $\pi_1$-injective in $X$, then we know that $\A_X(A)$ is simplicially isomorphic to $\A(A)$, hence embeds into $\A(X)$ (Proposition \ref{prop: j_A is a simplicial embedding}). In this case the passage through mapping cones in the rightmost square of diagram (\ref{eq: diagram simplicial maps for pairs}) is unnecessary. It follows that $\Psi^n$ actually satisfies the following inequalities: for every $\alpha \in H^n_b(\A(X),\A(A))$,
	\[
	(2C_n)^{-1}(n+2)^{-1} \cdot \|\alpha \|_\infty \leq \| \Psi^n(\alpha)\|_\infty \leq 2C_n(n+2)\cdot \|\alpha\|_\infty.
	\]
\end{rem}
\section{Regularity of simplicial actions on pairs of multicomplexes}
\label{sec: regularity of simplicial actions}

Let $(K,L)$ be a pair of multicomplexes and let $G$ be a group acting simplicially on $K$.
If the action $G\acts K$ preserves $L$, we know by Proposition \ref{proposition: amenable - invariant resolutions} that every relative bounded coclass in $H^\bullet_b(K,L)$ can be represented by a $G$-invariant cocycle.
The goal of this section is obtain a similar result in the case in which the action $G\acts K$ does \emph{not} preserve $L$.
To this end, we introduce a regularity condition of the action $G\acts K$ on $L$ (Definition \ref{defn: having orbits in L induced by H}).
Unlike in Proposition \ref{proposition: amenable - invariant resolutions}, in this setting we are only able to retain a bi-Lipschitz control over the norms.

Given an action $G\acts K$, we consider the induced action $G\acts C_\bullet(K)$ on algebraic simplices, which in turns induces an action $G\acts C^\bullet_b(K)$ on bounded cochains.
If $H$ is a subgroup of $G$ whose action on $K$ preserves a submulticomplex $L$ of $K$, there is an obvious restriction map \[r_{G,H}^\bullet\colon C^\bullet_b(K)^G\rightarrow C^\bullet_b(L)^H,\] whose kernel is denoted by $C^\bullet_b(K,L)^{G,H}$.
Unlike the case described in Proposition \ref{proposition: amenable - invariant resolutions}, $H$ could be a proper subgroup of $G$.
\begin{defn}
	\label{defn: having orbits in L induced by H}
	Let $(K,L)$ be a pair of multicomplexes. Let $G\acts K$ be a simplicial action and let $H$ be a subgroup of $G$ which induces an action $H\acts (K,L)$.
	We say that the action \emph{$G\acts K$ has orbits in $L$ induced by $H$} if, for every \emph{algebraic} simplex $\sigma$ of $L$ and every $g \in G$ such that $g\cdot \sigma$ is an algebraic simplex of $L$, there exists $h \in H$ such that $h\cdot \sigma = g \cdot \sigma$.
\end{defn}
The following result shows that, under the previous condition, the restriction map $r_{G,H}^\bullet$ is surjective, and will allow us, under suitable circumstances, to take $G$-invariant cocycles in every relative coclass of $H^\bullet_b(K,L)$.
The constant $C_n$ appearing in the following is described in Definition \ref{defn: constants C_n}.
\begin{lemma}
	\label{lemma: invariant cochains}
	Let $(K,L)$ be a pair of multicomplexes. Let $G\acts K$ be a simplicial action and let $H$ be a subgroup of $G$ which induces an action $H\acts (K,L)$.
	Assume that the orbits of $G$ in $L$ are induced by $H$. Then the rows of the following commutative diagram are both exact
	\[
	\begin{tikzcd}
		0 \arrow[r] & {C^\bullet_b(K,L)^{G,H}} \arrow[r] \arrow[d, "{\iota_{G,H}^\bullet}"] & C^\bullet_b(K)^G \arrow[r, "{r_{G,H}^\bullet}"] \arrow[d, "{\iota_{G}^\bullet}"]  & C^\bullet_b(L)^H \arrow[r] \arrow[d, "{\iota_{H}^\bullet}"] & 0 \\
		0 \arrow[r] & {C^\bullet_b(K,L)} \arrow[r]                 & C^\bullet_b(K) \arrow[r, "r^\bullet"] & C^\bullet_b(L) \arrow[r]             & 0
	\end{tikzcd}
	\]
	where the vertical arrows are inclusions of cochains. Moreover, if $G$ is amenable and the actions $G\acts K$ and $H\acts L$ are by automorphisms which are simplicially homotopic to the identity, then $\iota_{G,H}^\bullet$ induces a bi-Lipschitz isomorphism in cohomology such that
	\[
	(n+2)^{-1}(2 C_n)^{-1} \cdot \|\alpha \|_\infty \leq \| H^n_b(\iota_{G,H}^\bullet)(\alpha)\|_\infty \leq \|\alpha\|_\infty,
	\]
	for every $n \in \N$ and every $\alpha \in H^n(C^\bullet_b(K,L)^{G,H})$.
\end{lemma}
\begin{proof}
	First of all, we show that we can extend $H$-invariant chains on $L$ to $G$-invariant chains on $K$, i.e. $r_{G,H}^\bullet$ is surjective. To this end our conditions on the action $G\acts K$ are crucial.
	Let $n \in\N$ and let $z \in C^n_b(L)^H$. We define a cochain $z' \in C^n_b(K)^G$ in the following way: for every algebraic $n$-simplex $\sigma$ of $K$, we set $z'(\sigma)= z(\hat{\sigma})$, if $\sigma = g\cdot \hat{\sigma}$ for some $g \in G$ and $\hat{\sigma}\in C_n(L)$, and $z'(\sigma)=0$, otherwise.
	Since the action $G\acts K$ has orbits in $L$ induced by $H$ and $z$ is $H$-invariant, it is easy to check that $z'$ is indeed well-defined.
	Moreover it is clear from the construction that $z'$ is $G$-invariant and satisfies $r_{G,H}^n(z')=z$.
	
	Assume now that $G$ (hence $H$) is amenable and the actions $G\acts K$ and $H\acts L$ are by automorphisms which are simplicially homotopic to the identity. It follows that $\iota_G^\bullet$ and $\iota^\bullet_H$ induce isometric isomorphisms in cohomology (Proposition \ref{proposition: amenable - invariant resolutions}). Therefore, as an application of the Five Lemma, we can already deduce that $\iota_{G,H}^\bullet$ induces an isomorphism in cohomology. 
	Moreover, $H^n_b(\iota^\bullet_{G,H}$) is of course norm non-increasing.
	In order to retain further control over the norms, we need to pass through mapping cones in the following way.
	We consider the mapping cone complex $\Cone(r_{G,H})^\bullet$ associated to the restriction map $r^\bullet_{G,H}$, and we denote by $\bar{\delta}^\bullet$ its differential.
	Similarly, we consider the mapping cone complex
	\[
	\Cone(r)^\bullet = C^\bullet_b(L\hookrightarrow K)
	\]
	associated to the restriction map $r^\bullet\colon C^\bullet_b(K)\rightarrow C^\bullet_b(L)$.
	There is a natural chain map induced by the inclusion of invariant cochains:
	\[
	\varphi^n\colon \Cone(r_{G,H})^n \rightarrow \Cone(r)^n, \qquad (u,v)\rightarrow (\iota^n_G(u),\iota^{n-1}_H(v)).
	\]
	We consider the following commutative diagram
	\[
	\begin{tikzcd}
		{C^n_b(K,L)^{G,H}} \arrow[r, "{\iota^n_{G,H}}", hook] \arrow[d, "{\beta^n_{G,H}}"] & {C^n_b(K,L)} \arrow[d, "\beta^n"] \\
		{\Cone(r_{G,H})^n} \arrow[r, "\varphi^n"]                           & \Cone(r)^n,        
	\end{tikzcd}
	\]
	where $\beta^n$ was defined in Subsection \ref{subsec: mapping cones of pairs} and $\beta^n_{G,H}$ is defined by the formula \[\beta^n_{G,H}(u)=(u,0).\]
	Of course, if $\beta^\bullet$, $\beta^\bullet_{G,H}$ and $\varphi^\bullet$ induce bi-Lipschitz isomorphisms in cohomology, then also $\iota^\bullet_{G,H}$ does.
	By Lemma \ref{lem: beta induces bilipschitz isomorphism}, we know that $\beta^\bullet$ induces a bi-Lipschitz isomorphism in cohomology.
	We now show that $\beta^\bullet_{G,H}$ induces a bi-Lipschitz isomorphism in cohomology such that
	\begin{equation}
		\label{eq: beta_G,H is bilipschitz}
		(n+2)^{-1} \cdot \|\alpha \|_\infty \leq \| H^n_b(\beta^\bullet_{G,H})(\alpha)\|_\infty \leq \|\alpha\|_\infty,
	\end{equation}
 	for every $\alpha \in H^n(C^\bullet_b(K,L)^{G,H})$.
	In order to prove that $H^n_b(\beta^\bullet_{G,H})$ is surjective, we take a class in $H^n(\Cone(r_{G,H})^\bullet)$ and we consider a cocycle $(u,v) \in \Cone(r_{G,H})^n$ representing it.
	The extension $v'\in C^{n-1}_b(K)^G$ of $v \in C^{n-1}_b(L)^H$ is well-defined because the action $G\acts K$ has orbits in $L$ induced by $H$. 
	Hence, we can consider the cochain $u+\delta^{n-1} v' \in C^n_b(K)^{G}$. 
	Since $(u,v)$ is a cocycle, it follows that $u + \delta^{n-1} v' \in C^n_b(K,L)^{G,H}$ is a relative cocycle.
	Moreover, since 
	\[
	(u+\delta^{n-1} v',0)- (u,v)=\bar{\delta}^{n-1}(v', 0),
	\]
	we obtain that $\beta^n_{G,H}(u + \delta v')$ and $(u,v)$ represent the same class in cohomology.
	This shows that $H^n_b(\beta^\bullet_{G,H})$ is surjective.
	Moreover, if $f\in C^n_b(K,L)$ is a relative bounded cocycle such that $\beta^n_{G,H}(f)=(f,0)=\bar{\delta}^{n-1}(z,w)$, then $z+\delta^{n-2}w'$ belongs to $C^{n-1}_b(K,L)^{G,H}$ and $\delta^{n-1}(z+\delta^{n-2}w')=f$. Therefore $H^n_b(\beta^\bullet_{G,H})$ is also injective, hence an isomorphism.
	In order to prove (\ref{eq: beta_G,H is bilipschitz}), we first observe that $H^n_b(\beta^\bullet_{G,H})$ is norm non-increasing.	
	On the other hand, since $\beta^n_{G,H}(u + \delta v')$ and $(u,v)$ represent the same class in cohomology, we can deduce the estimate in (\ref{eq: beta_G,H is bilipschitz}) from the following inequalities: 
	\begin{align*}
		\| u + \delta v'\|_\infty & \leq \|u\|_\infty + \|\delta^n\| \cdot \|v'\|_\infty \\
		& \leq \|u\|_\infty + (n+1)\cdot \|v'\|_\infty \\
		& \leq \|u\|_\infty + (n+1)\cdot \|v\|_\infty \\
		& \leq (n+2)\cdot \max\left\{\|u\|_\infty, \|v\|_\infty \right\} \\
		& = (n+2)\cdot \|(u,v)\|_\infty.
	\end{align*}
	Since $G$ is amenable and the action $G\acts K$ is by automorphisms which are simplicially homotopic to the identity, we know that $\iota^\bullet_G$ admits a $(C_n)$-chain homotopy inverse (Proposition \ref{proposition: amenable - invariant resolutions}).
	The same holds for the action $H\acts L$, so that $\iota^\bullet_H$ admits a $(C_n)$-chain homotopy inverse.
	It follows from Lemma \ref{lem: les and five lemma for mapping cones} that $\varphi^\bullet$ induces a bi-Lipschitz isomorphism in cohomology such that
	\[
	(2 C_n)^{-1}\cdot \|\alpha \| \leq \|H^n(\varphi^\bullet)(\alpha)\| \leq \|\alpha \|,
	\]
	for every $\alpha \in H^n(\Cone(r_{G,H})^\bullet)$.
	In conclusion, since $\beta^\bullet$, $\beta^\bullet_{G,H}$ and $\varphi^\bullet$ induce bi-Lipschitz isomorphisms in cohomology, then also $\iota^\bullet_{G,H}$ does and the estimate over the norms in the statement easily follows.
\end{proof}

\begin{rem}
	\label{rem: invariant chains in Kuessner}
	Let $(K,L)$ be a pair of multicomplexes and let $G$ be a group acting on $K$. Assume that the action $G\acts K$ does \emph{not} preserve the submulticomplex $L$. 
	The set $GL$ of $G$-translates of $L$, is a submulticomplex of $K$ and there is a well-defined action $G\acts (K,GL)$.
	In a previous version of this paper and in Kuessner's work on multicomplexes (see, for example, \cite[Corollary 2]{Kue15}) the following statement is claimed to be true. 
	If $G$ is amenable and it acts on $K$ by automorphisms which are simplicially homotopic to the identity, then the relative bounded cohomology $H^n_b(K,L)$ can be isometrically computed via the resolution $C^\bullet_b(K,GL)^G$ of $G$-invariant cochains on $K$ relative to $GL$.
	We believe that this approach underestimates some difficulties, which could arise in geometric contexts, if we do not require some regularity of the action $G\acts K$ on $L$. We refer the reader to Remark \ref{rem: relative construction and regularity of action} for a geometric motivation in our context and to \cite{Cap} for a more precise argument in the context of amenable open covers.
\end{rem}

\section{Proof of Theorem \ref{thm: superadditivity}}
\label{sec: proof of superadditivity}

Let $n \in \N_{\geq 2}$. Let $M_1,\dots, M_k$ be oriented compact connected triangulated $n$-manifolds with non-empty boundary.
Let \[(S_1^+,S_1^-),\dots,(S_h^+,S_h^-)\] be a pairing of some oriented compact connected pairwise-disjoint $(n-1)$-submanifolds of $\partial M_1\sqcup\cdots \sqcup \partial M_k$.
Let $f_i\colon S_i^+\rightarrow S_i^-$ be an orientation-reversing homeomorphism, $i\in\{1,\dots,h\}$. 
Let $M$ be the manifold obtained by gluing $M_1,\dots, M_k$ along $f_1,\dots, f_h$ and assume that $M$ is connected.
For every $j \in \{1,\dots, k\}$, let $q_j\colon M_j\rightarrow M$ denote the quotient map (which is an embedding, provided that $M_j$ does not admit self-gluings).
We still denote by $M_j$ the image of $M_j$ in $M$ via $q_j$.
We denote by $\S$ the union of the manifolds $S_i^\pm$, $i\in \{1,\dots, h\}$, understood as a subset of $M$.
Let \[B_j = q_j^{-1}(\partial M) = \partial M \cap M_j,\] understood a subset of $M_j$.
In this section, we first deduce Theorem \ref{thm: superadditivity} from the following more general result, which is proved later in this section.
The constant $C_n$ in the statement is described in Definition \ref{defn: constants C_n}.
\begin{prop}
	\label{prop: superadditivity}
	If the following conditions hold for every $i\in \{1,\dots,h\}$ and $j\in \{1,\dots, k\}$:
	\begin{enumerate}
		\item[$\mathrm{(1)}$] The kernel of $\pi_1(\partial M \hookrightarrow M,x)$ is amenable for every $x \in \partial M$;
		\item[$\mathrm{(2)}$] The kernel of $\pi_1(B_j \hookrightarrow M_j,x)$ is amenable for every $x \in B_j$;
		\item[$\mathrm{(3)}$] $M_j$ is $\pi_1$-injective in $M$;
		\item[$\mathrm{(4)}$] $\S_i^\pm$ is $\pi_1$-injective in $M_j$ and has an amenable fundamental group;
		\item[$\mathrm{(5)}$] $\partial S_i^{\pm}$ is path-connected and the map $\pi_1(\partial S_i^{\pm} \hookrightarrow S_i^{\pm},x)$ is an isomorphism for one (whence every) $x \in \partial S_i^{\pm}$;
	\end{enumerate}
	then
	\begin{equation}
		\label{eq: inequality superadditivity}
		(2 C_n)^4 (n+2)^3\cdot \|M,\partial M\| \geq \|M_1,\partial M_1\| + \dots + \|M_k,\partial M_k\|.
	\end{equation}
	Moreover, if $\partial M$ and $B_j$ are $\pi_1$-injective in $M$, then 
	\begin{equation}
		\label{eq: inequality superadditivity, improved}
		(2 C_n)^3 (n+2)^3\cdot \|M,\partial M\| \geq \|M_1,\partial M_1\| + \dots + \|M_k,\partial M_k\|.
	\end{equation}
\end{prop}
\begin{rem}
	\label{rem: wlog embeddings}
	We can always reduce to the case when all the maps $q_j\colon M_j\rightarrow M$ are embeddings. In fact, problems arise only in presence of self-gluings. In this case, instead of considering the self-gluing $f_i\colon S_i^+ \rightarrow S_i^-$ of $M_j$, we add another piece \[N_i = S_i^+\times [0,1],\] which is glued to $M_j$ via the maps
	\begin{align*}
		S^+_i\times \{0\} \rightarrow S^+_i, & \quad (x,0)\mapsto x, \\
		S_i^+\times \{1\} \rightarrow S_i^-, & \quad (x,1)\mapsto f_i(x).
	\end{align*}
	The manifold $N$ obtained in this way is homeomorphic to $M$. Hence, if inequalities (\ref{eq: inequality superadditivity}) or (\ref{eq: inequality superadditivity, improved}) hold for $N$, then they also hold for $M$. 
	Notice that $N$ is constructed by gluing embedded pieces, which satisfy the assumptions of Proposition \ref{prop: superadditivity}.
	\textbf{As a consequence, throughout the sequel, we always assume that $M_1,\dots, M_k$ are embedded in $M$.}
\end{rem}

\begin{proof}[Proof of Theorem \ref{thm: superadditivity}]
We assume that the manifolds $M_1,\dots, M_k$ have $\pi_1$-injective boundary.
We need to check that conditions (1) - (5) of Proposition \ref{prop: superadditivity} are satisfied under the assumptions of Theorem \ref{thm: superadditivity}, which we recall for the convenience of the reader:
\begin{itemize}
	\item[$(i)$] $S_i^\pm$ has an amenable fundamental group;
	\item[$(ii)$] $\partial S_i^\pm$ is path-connected, $\pi_1$-injective in the corresponding $\partial M_j$ and the map $\pi_1(\partial S_i^{\pm} \hookrightarrow S_i^{\pm},x)$ is an isomorphism for every $x \in \partial S_i^{\pm}$.
\end{itemize}

First, we observe that $S_i^\pm$ and $B_j$ are $\pi_1$-injective in $\partial M_j$. In fact, by Seifert - Van Kampen theorem, $\pi_1(\partial M_j)$ can be described as the fundamental group of a graph of groups with vertex groups $\pi_1(S_i^\pm)$, $\pi_1(B_j)$ and edge groups $\pi_1(\partial S_i^\pm)$. Under our assumptions, the edge morphisms are injective.  Consequently, we know that the vertex groups embeds into $\pi_1(\partial M_j)$ \cite{Ser02}.
Moreover, since \[\partial B_j = \partial (\S \cap M_j) = B_j \cap (\S \cap M_j)\] has the same number of connected components of $\S \cap M_j$, it follows from the Mayer - Vietoris exact sequence that $B_j$ has the same number of connected components as $\partial M_j$.

In order to get (1), we prove that $\partial M$ is $\pi_1$-injective in $M$. 
To this end, we observe that $\pi_1(M)$ can be described as the fundamental group of a graph of groups with vertex groups $\pi_1(M_j)$ and edge groups $\pi_1(S_i^\pm)$, while $\pi_1(\partial M)$ can be described as the fundamental group of a graph of groups with vertex groups $\pi_1(B_j)$ and edge groups $\pi_1(\partial S_i^\pm)$. Since the underlying graphs are isomorphic (here we use that $B_j$ has the same number of connected components of $\partial M_j$) and the inclusion $\partial M\hookrightarrow M$ induces injective morphisms at the level of vertex and edge groups, it follows that $\partial M$ is $\pi_1$-injective in $M$.
In order to get (2), we just recall that $B_j$ is $\pi_1$-injective in $\partial M_j$, which is in turn $\pi_1$-injective in $M_j$.
In order to get (3), we observe that $\pi_1(M)$ is the fundamental group of a graph of groups with injective edge morphisms, therefore the vertex groups $\pi_1(M_j)$ embed into $\pi_1(M)$.
Conditions (4) and (5) clearly follow from $(i)$ and $(ii)$.
In conclusion, since in our case $\partial M$ and $B_j$ are $\pi_1$-injective in $M$, then inequality (\ref{eq: inequality superadditivity, improved}) applies.
\end{proof}

\begin{rem}
	\label{rem: relative construction and regularity of action}
	Some difficulties could arise when gluings are performed along proper submanifolds of the boundary.
	Our general strategy to get the estimate in Proposition \ref{prop: superadditivity} can be summarized as follows.
	
	In order to obtain a fundamental cocycle of $M$ with controlled norm, we average some fundamental cocycles of the manifolds $M_1,\dots, M_k$ on the gluing locus $\S$.
	To this end, we need to consider the action of the group $G=\Pi_M(\S)$ on the multicomplex $\A(M)$, thus passing to $G$-invariant cochains.
	This averaging procedure is possible when the group $G$ is amenable, e.g. when the connected components of $\S$ have amenable fundamental group (Lemma \ref{lem: Pi(A,A) is amenable}).
	If $\partial \S$ is non-empty, the action $G\acts \A(M)$ does \emph{not} preserve the submulticomplex $\A(\partial M)$. 
	Since the boundary $\partial M$ is also obtained by gluing the remaining boundaries of $M_1,\dots, M_k$ along $\partial \S$ (unless we are gluing entire boundary components), the averaging fails to be relative unless we assume some regularity of the action of $G$ on $\partial \S$.
	Therefore, in order to consider invariant chains in the relative setting, our solution is to show that the action $G\acts \A(M)$ has orbits in $\A(\partial M)$ induced by $H=\Pi_M(\partial \S)$ (Lemma \ref{lem: G acts K has orbits on L induced by H}).
	This translates in the assumption (5) of Proposition \ref{prop: superadditivity}.
	
	When we are gluing along entire boundary components, passing to invariant chains is more straightforward. This case is presented in Section \ref{sec: additivity} to prove Theorem \ref{thm: additivity full boundary components}.
\end{rem}

\subsection{Proof of Proposition \ref{prop: superadditivity}}
\label{sec: superadditivity}
We define nested pairs of multicomplexes from the gluing data.
For every $j \in \{1,\dots, k\}$, we consider the following pairs of multicomplexes
\[
(K,K')=(\A(M),\A_M(\partial M)),
\]
\[
(L_j,L_j')=(\A(M_j), \A_{M_j}(B_j)),
\]
where $\A_M(\partial M)$ (resp. $\A_{M_j}(B_j)$) denotes the image of $\A(\partial M)$ (resp. $\A(B_j)$) inside $\A(M)$ (resp. $\A(M_j)$).
Since $M_j$ is $\pi_1$-injective in $M$, we have that $\A(M_j)$ is naturally isomorphic to $\A_M(M_j)$ and $\A_{M_j}(B_j)$ is naturally isomorphic to $\A_M(B_j)$ (Proposition \ref{prop: j_A is a simplicial embedding}).
It follows that there is a natural inclusion of pairs
\[
(L_j,L_j')\subseteq (K,K').
\]
Let $A$ denote the multicomplex $\A(\S)$. Since $\S$ is $\pi_1$-injective in $M$, we have that $A$ naturally sits in $K$ as a submulticomplex (Proposition \ref{prop: j_A is a simplicial embedding}). 
We set \[G=\Pi_M(\S).\]
We know that $G$ acts on $K=\A(M)$ as described in Subsection \ref{subsec: the action of Pi(X,X) on A(X)}.
Since both $M_j$ and $\S$ are $\pi_1$-injective in $M$, it follows that $A\cap L_j$ can be identified with the aspherical multicomplex $\A(M_j\cap \S)$ associated to the collection of path-connected components of $\S$ contained in $M_j$.
It follows that $G$ defines an action on the pair $(K,L_j)$, where $G$ acts on $L_j$ as $\Pi_{M_j}(M_j\cap \S)$ does. By Proposition \ref{prop: elements of Pi(A,A) are homotopic to the indentity}, the action $G\acts (K,L_j)$ is by automorphisms that are simplicially homotopic (as maps of pairs) to the identity.
The actions $G\acts K$ and $G\acts L_j$ do \emph{not} preserve the submulticomplexes $K'$ and $L_j'$ respectively.
However, by setting \[H =\Pi_M(\partial S),\] which is a subgroup of $G$ in our assumptions, we have that these actions have orbit induced by $H$ on those multicomplexes (see Definition \ref{defn: having orbits in L induced by H}). In order to show this, the assumption (5) in Proposition \ref{prop: superadditivity} is crucial.
\begin{lemma}
	\label{lem: G acts K has orbits on L induced by H}
	The action $G\acts K$ (resp. $G\acts L_j$) has orbits in $K'$ (resp. $L_j'$) induced by $H$.
\end{lemma}
\begin{proof}
	We prove the statement for the action $G\acts K$, since the argument is the same for the other action.
	Let $\sigma=(\Delta,(x_0,\dots, x_n))$ be an algebraic $n$-simplex of $K'=\A(\partial M)$ and let $g \in G$ be such that $g\cdot \sigma$  is an algebraic $n$-simplex of $\A(\partial M)$. 
	We need to show that there exists an element $h \in H$ such that $h\cdot \sigma = g\cdot \sigma$.
	Let $\{v_0,\dots, v_k\}$ be the vertices of $\Delta$, with $k\leq n$.
	Since $v_j$ and $g\cdot v_j$ are both vertices of $\A(\partial M)$ (hence points of $\partial M$), then, for every $j \in \{0,\dots,k\}$, there is a representative $\{\gamma_x\}_{x \in X}$ of $g$ such that $\gamma_{v_j}\colon [0,1]\rightarrow \S$ has both endpoints in $\partial M\cap \S =\partial S$.
	By assumption (5), we can invoke Lemma \ref{lem: rephrase condition (5)} below to deduce that there exist paths
	\[
	\lambda_{v_j}\colon [0,1] \rightarrow \partial \S, \qquad j \in \{0,\dots, k\},
	\]
	such that $\lambda_{v_j}$ is homotopic to $\gamma_{v_j}$ in $M$ relative to the endpoints.
	Therefore $h=\{\lambda_0, \dots, \lambda_k\}$ defines an element of $H$. 
	By construction we have that $h\cdot e = g\cdot e$, for every edge $e$ of $\Delta$, hence $g\cdot \Delta^1=h\cdot \Delta^1$, where $\Delta^1$ denotes the 1-skeleton of $\Delta$. Since $K=\A(M)$ is an aspherical multicomplex, this is indeed sufficient to conclude \cite[Proposition 3.30]{FM23}.
\end{proof}
\begin{lemma}
	\label{lem: rephrase condition (5)}
	For every path $\gamma\colon [0,1]\rightarrow \S$ with endpoints in $\partial \S$, there exists a path $\lambda \colon [0,1]\rightarrow \partial \S$ such that $\lambda$ is homotopic in $M$ to $\gamma$ relative to the endpoints.
\end{lemma}
\begin{proof}
	The path-connected components of $\S$ can be identified with $S_i^{\pm}$, for $i \in \{1,\dots, k\}$.
	Since $\partial S_i^{\pm}$ is path-connected, there exists a path $\varepsilon \colon [0,1]\rightarrow\partial \S$ from $\gamma(0)$ to $\gamma(1)$. The homotopy class of the loop $\gamma * \bar{\varepsilon}$ identifies an element of $\pi_1(S_i^{\pm},\gamma(0))$. Since $\pi_1(\partial S_i^{\pm} \hookrightarrow S_i^{\pm},\gamma(0))$ is an isomorphism, there exists a loop $\delta \colon [0,1] \rightarrow \partial S_i^{\pm}$ which is homotopic in $S_i^{\pm}$ (hence in $M$) to $\gamma * \bar{\varepsilon}$ relative to the endpoints. It follows that $\lambda \coloneqq \delta * \varepsilon$ satisfies the statement.
\end{proof}
Consider now the diagram in Figure \ref{eq: diagram superadditivity}, where the following observations need to be made.
By conditions (1) and (2) of Proposition \ref{prop: superadditivity}, we can invoke Theorem \ref{thm: relative mapping theorem, biLipschitz version} to get canonical bi-Lipschitz isomorphisms $\Psi^n$ and $\Psi^n_j$.
It is straightforward to check that the middle square is commutative.
The maps $I^n$ and $I_j^n$ are induced by the inclusions of complexes
\[
C^\bullet_b(K,K')^{G,H}_\alt\hookrightarrow C^\bullet_b(K,K'),\qquad C^\bullet_b(L_j,L_j')^{G,H}_\alt\hookrightarrow C^\bullet_b(L_j,L_j').
\]
Since $G$ (hence $H$) is amenable and since the actions $G\acts (K,L_j)$, $H\acts K'$ and $H\acts L_j'$ are by automorphisms that are simplicially homotopic to the identity (Proposition \ref{prop: elements of Pi(A,A) are homotopic to the indentity}), it follows from Lemma \ref{lemma: invariant cochains} that $I^n$ and $I_j^n$ are bi-Lipschitz isomorphisms, for every $j \in \{1,\dots, k\}$.
The lower square is of course commutative.\\
\begin{figure}
	\begin{tikzcd}
		{H^n_b(M,\partial M)} \arrow[r, "\oplus H^n_b(q_j)"] \arrow[d, equal] & {\bigoplus\limits_{j=1}^k H^n_b(M_j,\partial M_j)} \arrow[d, "\oplus H^n_b(\alpha_j)"'] \\
		{H^n_b(M,\partial M)} \arrow[r, "\oplus H^n_b(q_j)"]                                            & {\bigoplus\limits_{j=1}^k H^n_b(M_j,B_j)}                                               \\
		{H^n_b(K,K')} \arrow[u, "\Psi^n"] \arrow[r]                                                           & {\bigoplus\limits_{j=1}^k H^n_b(L_j,L_j')} \arrow[u, "\oplus \Psi_j^n"]                      \\
		{H^n(C^\bullet_b(K,K')^{G,H}_\alt)} \arrow[u, "I^n"] \arrow[r, "\oplus l_j"] & {\bigoplus\limits_{j=1}^k H^n(C^\bullet_b(L_j,L_j')^{G,H}_\alt),} \arrow[u, "\oplus I_j^n"]  
	\end{tikzcd}
	\caption{Proof of Theorem \ref{thm: superadditivity}. Horizontal maps in the diagram are induced by inclusions. The maps $\alpha_j\colon (M_j,B_j)\rightarrow (M_j,\partial M_j)$ denote the inclusions of pairs.}
	\label{eq: diagram superadditivity}
\end{figure}

Our proof of Proposition \ref{prop: superadditivity} is based on the following extension property for bounded invariant coclasses.
\begin{prop}
	\label{prop: extension of bounded coclasses}
	Let $\varepsilon>0$ and take for every $j\in \{1,\dots, k\}$ an element
	\[
	\phi_j \in H^n(C^\bullet_b(L_j,L_j')^{G,H}_\alt).
	\]
	Then there exists a coclass $\phi \in H^n(C^\bullet_b(K,K')^{G,H}_\alt)$ such that $l_j(\phi)=\phi_j$ for every $j\in \{1,\dots, k\}$ and
	\[
	\|\phi\|_\infty \leq \max \bigl\{\|\phi_j\|\colon \; j\in \{1,\dots, k\}\bigr\} + \varepsilon.
	\]
\end{prop}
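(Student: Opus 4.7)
The strategy is to build an explicit bounded $G$-invariant alternating cocycle $z \in C^n_b(K, GK')^G_\alt$ whose class restricts to each $\phi_j$ under $l_j$, with controlled norm. First I pick cocycle representatives $z_j \in C^n_b(L_j, GL_j')^G_\alt$ of $\phi_j$ satisfying $\|z_j\|_\infty \leq \|\phi_j\|_\infty + \epsilon/2$, and I observe that $L_i \cap L_j \subseteq A := \mathcal{A}(\mathcal{S})$ whenever $i \neq j$: any simplex in such an intersection has its $1$-skeleton in $M_i \cap M_j \subseteq \mathcal{S}$, hence in $A$, and the asphericity of $K$ (simplices determined by their $1$-skeleta) upgrades this to the simplex itself.

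Next, I fix a $G$-equivariant labeling $r \colon K^0 \to \{1, \ldots, k\}$ with $v \in M_{r(v)}$; such a labeling exists because $G$ acts trivially outside $\mathcal{S}$, while each connected component of $\mathcal{S}$ lies in at most two specific pieces, so one can choose a single label uniformly on that component. Define the extension $z$ on an algebraic simplex $\sigma = (\Delta, (v_0, \ldots, v_n))$ by $z(\sigma) = z_j(\sigma)$ when $\Delta \in L_j \setminus A$ (then $j$ is unique by the previous paragraph), and $z(\sigma) = 0$ otherwise. Routine verifications show that $z$ is bounded by $\max_j \|z_j\|_\infty$, alternating, $G$-invariant (since $G$ preserves each $L_j$ and $A$), and vanishes on $GK'$ (using that alternating $G$-invariant cochains vanish on $A$-edge simplices by the crucial observation, together with the inclusion $L_j \cap GK' \subseteq GL_j' \cup A$ in this aspherical setting).

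The main obstacle is proving $\delta z = 0$. For an $(n+1)$-simplex $\sigma$ with $\Delta \in L_j \setminus A$, all faces lie in $L_j$ and $\delta z(\sigma)$ collapses to $\delta z_j(\sigma) = 0$ (with the crucial observation identifying $z(\partial_i \sigma) = z_j(\partial_i \sigma) = 0$ on faces in $A$); the case $\Delta \in A$ is trivial since all faces sit in $A$. The delicate case is when $\Delta$ is a ``cross-piece'' simplex in $K \setminus \left(A \cup \bigcup_j L_j\right)$: some face $\partial_i \Delta$ may still belong to a single $L_j \setminus A$ and thereby contribute a nonzero term $z_j(\partial_i \sigma)$ to $\delta z(\sigma)$. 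Following the barycentric technique of \cite[Chapter 9]{frigerio2017bounded}, I would argue that under the admissibility hypotheses any such face necessarily contains an edge in $A$, so Lemma \ref{lema: a crucial observation} forces $z_j(\partial_i \sigma) = 0$. Should this structural claim fail in certain edge cases, the naive $z$ must be corrected by adding cochains supported on simplices with edges in $A$; such corrections are cohomologically invisible by the crucial observation, so they change neither $[z]$ nor $l_j([z])$, nor the norm bound.

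Finally, for each $j$ the identity $l_j([z]) = \phi_j$ follows because $z$ agrees with $z_j$ on every $L_j$-simplex outside $A \cup GL_j'$, while both cochains vanish on $A$-edge simplices. The norm bound $\|z\|_\infty \leq \max_j \|\phi_j\|_\infty + \epsilon$ is then immediate from the construction. The hardest step is the cocycle verification in the third paragraph, which requires a fine combinatorial analysis of cross-piece simplices in the aspherical multicomplex $K$ and a careful use of the amenability-invariance mechanism encoded in the crucial observation.
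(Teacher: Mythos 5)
There is a genuine gap, and it sits exactly where you flag the ``hardest step.'' Your extension sets $z(\sigma)=0$ on every cross-piece simplex, i.e.\ on every simplex of $K$ not contained in $A\cup\bigcup_j L_j$, and the cocycle condition then hinges on the structural claim that any face of such a simplex lying in a single $L_j\setminus A$ must contain an edge in $A$. This claim is false. Take $k=2$ and an arbitrary $n$-simplex $\Delta'$ of $L_1=\A(M_1)$ all of whose vertices lie in $M_1\setminus\S$; generically $\Delta'$ has no edge in $A=\A(\S)$ and $z_1(\Delta',\cdot)\neq 0$. By completeness and asphericity of $K$ one can cone $\Delta'$ off to an $(n+1)$-simplex $\Delta$ of $K$ with one extra vertex $x_0\in M_2\setminus\S$ (choose edges from $x_0$ to the vertices of $\Delta'$ through the gluing locus and fill using that $K$ is aspherical). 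Then $\Delta$ and all of its faces containing $x_0$ are cross-piece simplices, so your $z$ vanishes on them, while $z(\partial_0\sigma)=z_1(\partial_0\sigma)$ survives; hence $\delta z(\sigma)=\pm z_1(\partial_0\sigma)\neq 0$. Your fallback --- correcting by cochains supported on simplices with an edge in $A$ --- cannot repair this, because the defect $\delta z$ is supported on cross-piece $(n+1)$-simplices that need not have any edge in $A$, and any correction of $z$ on the offending face $\partial_0\Delta\in L_1\setminus A$ would destroy the required identity $l_1([z])=\phi_1$.

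The paper's proof resolves precisely this obstruction by \emph{not} extending by zero: it passes to the universal covering $\tilde K$, where $\bigcup_j\pi^{-1}(L_j)$ is organized as a tree of multicomplexes indexed by a tree $T$, and assigns to each simplex $\sigma$ of $\tilde K$ a \emph{projection} $\sigma_v$ onto the piece $\tilde L_v$ corresponding to its (unique) barycenter $v\in V(T)$, obtained by pushing each vertex outside $\tilde L_v$ to the separating component of $\pi^{-1}(A)$. The value of the extended cochain on a cross-piece simplex is then the genuinely nonzero quantity $f_v(\sigma_v)$, and the cocycle identity follows because projection commutes with taking faces, so $\hat f_v(\partial s)=f_v(\partial s_v)=0$. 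The tree structure (hence the universal cover) is essential for the projections to be well defined, and the amenability of $G$ enters through Lemma \ref{lem: projections are well defined up to H-translates} and Lemma \ref{lem: if v not a barycenter, then f_v = 0} rather than only through Lemma \ref{lema: a crucial observation}. Your proposal cites this barycentric technique but does not implement it; as written, the construction does not produce a cocycle.
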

We first show how Proposition \ref{prop: extension of bounded coclasses} can be used to conclude the proof of Proposition \ref{prop: superadditivity}. By the duality between bounded cohomology and simplicial volume (Proposition \ref{prop: duality principle}), for every $j\in \{1,\dots, k\}$ we can take an element $\gamma_j \in H^n_b(M_j,\partial M_j)$ such that
\begin{align*}
	\|M_j,\partial M_j\| = \langle \gamma_j, [M_j,\partial M_j] \rangle, \qquad \|\gamma_j\|_\infty \leq 1.
\end{align*}
Let $\gamma_j' \in H^n(C^\bullet_b(L_j,L_j')^{G,H}_\alt)$ be such that \[\Psi^n_j\circ I_j^n(\gamma_j')= H^n_b(\alpha_j)(\gamma_j).\] Since $H^n_b(\alpha_j)$ is norm non-increasing and $\Psi_j^n$ and $I_j^n$ are bi-Lipschitz isomorphisms, we deduce from the estimates in Lemma \ref{lemma: invariant cochains} and Theorem \ref{thm: relative mapping theorem, biLipschitz version} that
\[
\|\gamma_j'\|_\infty \leq (2C_n)^3(n+2)^2 \cdot \|\gamma_j\|_\infty \leq (2C_n)^3(n+2)^2.
\]
Let $\varepsilon >0$.
By Proposition \ref{prop: extension of bounded coclasses}, there exists $\gamma' \in H^n(C^\bullet_b(K,K')^{G,H}_\alt)$ such that 
\[
\|\gamma'\|_\infty \leq (2C_n)^3(n+2)^2 + \varepsilon, \qquad l_j(\gamma')=\gamma_j',\qquad j\in \{1,\dots, k\}.
\]
Let $\gamma = \Psi^n\circ I^n (\gamma') \in H^n_b(M,\partial M)$. 
Since $\Psi^n$ and $I^n$ are both bi-Lipschitz isomorphisms, again from the estimates in Lemma \ref{lemma: invariant cochains} and Theorem \ref{thm: relative mapping theorem, biLipschitz version} we deduce that
\[
\|\gamma \|_\infty \leq 2C_n(n+2) \cdot \|\gamma'\|_\infty \leq 2C_n(n+2)\cdot ((2C_n)^3(n+2)^2 + \varepsilon).
\]
We proceed with the proof of Proposition \ref{prop: superadditivity} by constructing a fundamental cycle of $M$ that is well behaved with respect to the gluings. In the following, with an abuse of notation, we identify any chain in $M_j$ and $\S$ with the corresponding chain in $M$ via the corresponding inclusions.
\begin{lemma}
	\label{lem: fundamental class of M}
	There exists a real relative fundamental cycle $z \in C_n(M)$ of $M$ such that $z = z_1 + \dots + z_k$, where $z_j \in C_n(M_j)\subseteq C_n(M)$ is a real relative fundamental cycle of $M_j$, for every $j\in \{1,\dots, k\}$.
\end{lemma}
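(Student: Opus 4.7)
The plan is to start from arbitrary fundamental cycles of each pair $(M_j,\partial M_j)$ and correct for the failure of their boundaries to cancel exactly along $\S$ by a chain supported on $\S$. The key tools are the excision isomorphism $H_n(M,\partial M\cup \S)\cong \bigoplus_j H_n(M_j,\partial M_j)$ and the vanishing of $H_n(\S,\partial \S)$ for dimension reasons.

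First, for every $j$ I would choose a real fundamental cycle $z_j\in C_n(M_j)$ of the pair $(M_j,\partial M_j)$. After a sufficiently fine subdivision, which preserves its status as a fundamental cycle, I may assume that $\partial z_j$ decomposes as $b_j+s_j$ with $b_j\in C_{n-1}(B_j)$ supported on the free part of $\partial M_j$ and $s_j\in C_{n-1}(\S\cap M_j)$ supported on the gluing surfaces. Setting $z=\sum_j z_j$, $b=\sum_j b_j$ and $s=\sum_j s_j$, one has $\partial z=b+s$ with $b\in C_{n-1}(\partial M)$ and $s\in C_{n-1}(\S)$. A direct computation shows that $\partial s \in C_{n-2}(\partial \S)$, so $s$ defines a class $[s]\in H_{n-1}(\S,\partial \S)$.

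Next, I argue that $[s]=0$. For each $i$ the restriction of $s_{j^\pm(i)}$ to $S_i^\pm$ represents $\pm[S_i^\pm,\partial S_i^\pm]$, since $\partial z_{j^\pm(i)}$ represents the fundamental class of the closed manifold $\partial M_{j^\pm(i)}$. After the orientation-reversing identification $f_i\colon S_i^+\to S_i^-$, these two contributions cancel on $S_i$. Hence there exist $z'\in C_n(\S)$ and $\eta\in C_{n-1}(\partial \S)\subseteq C_{n-1}(\partial M)$ such that $s=\partial z'+\eta$. Setting $z''=z-z'$ gives $\partial z''=b+\eta\in C_{n-1}(\partial M)$, so $z''$ is a relative cycle in $C_n(M,\partial M)$.

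Finally, to identify $[z'']$ with $[M,\partial M]$, I would consider the long exact sequence of the triple $\partial M\subseteq \partial M\cup \S\subseteq M$. By excision one has $H_n(\partial M\cup \S,\partial M)\cong H_n(\S,\partial \S)$, which vanishes because $\S$ has dimension $n-1$. Hence the map $H_n(M,\partial M)\to H_n(M,\partial M\cup \S)$ is injective. Since $z'\in C_n(\S)\subseteq C_n(\partial M\cup \S)$, the classes $[z'']$ and $[z]$ coincide in $H_n(M,\partial M\cup \S)$. Using that the pieces $M_j$ meet only along $\S$ so that $M/(\partial M\cup \S)\cong \bigvee_j M_j/\partial M_j$, one obtains $H_n(M,\partial M\cup \S)\cong \bigoplus_j H_n(M_j,\partial M_j)$, and under this identification both $[z]$ and the image of $[M,\partial M]$ correspond to $\bigoplus_j [M_j,\partial M_j]$. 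By injectivity, $[z'']=[M,\partial M]$. I expect the main technical obstacle to be the clean splitting $\partial z_j=b_j+s_j$ in the first step, which I would handle by a small-simplex argument with respect to an open cover of $\partial M_j$ separating a neighborhood of $B_j$ from a neighborhood of $\S\cap M_j$ away from their common boundary $\partial \S\cap M_j$.
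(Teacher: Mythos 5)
Your overall strategy coincides with the paper's: sum fundamental cycles of the pieces and correct by a chain on $\S$, using that the contributions of $S_i^+$ and $S_i^-$ cancel in $H_{n-1}(\S,\partial\S)$ because the gluings reverse orientation. Your final step (injectivity of $H_n(M,\partial M)\to H_n(M,\partial M\cup\S)$ via excision and $H_n(\S,\partial\S)=0$) is in fact a careful justification of what the paper dismisses as ``immediate to check'', and is welcome.

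However, your first step has a genuine gap. You need $\partial z_j$ to split as $b_j+s_j$ with $b_j$ supported \emph{exactly} on the closed set $B_j$ and $s_j$ supported \emph{exactly} on the closed set $\S\cap M_j$; this exactness is not cosmetic, since in the application of the lemma one evaluates a cochain that vanishes precisely on simplices supported on $\S$, so $z'$ must lie in $C_n(\S)$ and not merely in $C_n(V)$ for a neighborhood $V$ of $\S$. The small-simplex argument you propose cannot deliver this: any open cover of $\partial M_j$ by neighborhoods of $B_j$ and of $\S\cap M_j$ must overlap along $\partial B_j$, and a subdivided simplex straddling $\partial B_j$ lies in one of the open sets but in neither closed piece. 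Subdivision only produces $b_j\in C_{n-1}(U)$, $s_j\in C_{n-1}(V)$, and pushing these into $B_j$ and $\S\cap M_j$ by a retraction introduces further correction terms that your argument does not control. The paper avoids the issue by building the boundary first: it fixes relative fundamental cycles $\sigma_j$ of $(M_j\cap\S,\partial)$ and $\beta_j$ of $(B_j,\partial)$, corrects $\beta_j$ by a chain $d_j\in C_{n-1}(\partial B_j)$ so that $c_j=\sigma_j-(\beta_j+d_j)$ is a genuine fundamental cycle of $\partial M_j$ (split by construction), and then modifies an arbitrary relative fundamental cycle $w_j$ of $M_j$ by a chain $u_j\in C_n(\partial M_j)$ with $\partial u_j=c_j-\partial w_j$, so that $z_j=w_j+u_j$ satisfies $\partial z_j=c_j$ exactly. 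You should replace your subdivision step by this construction; the rest of your argument then goes through.
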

\begin{proof}
	Let $w_j \in C_n(M_j)$ be a real chain representing a real fundamental class of $M_j$ and let $w = w_1 + \dots + w_k \in C_n(M)$.
	We need to modify $w$ in order to obtain the desired fundamental cycle for $M$.
	
	For every $i \in \{1,\dots, h\}$, let $\sigma_i^+\in C_{n-1}(S_i^+)$ be a real relative fundamental cycle of $S_i^+$. 
	We denote by \[\sigma_i^-=-(f_i)_*(\sigma_i^+)\] the fundamental cycle of $S_i^-$ induced by the gluing map $f_i$.
	Let $\beta_j \in C_{n-1}(B_j)$ be a real relative fundamental cycle of $B_j$, $j \in\{1,\dots, k\}$. 
	We denote by $\sigma_j$ the sum of the $\sigma_i^\pm$ such that $S_i^\pm\subseteq M_j$.
	Since $\partial \sigma_j$ and $\partial \beta_j$ represent a fundamental class of $\partial B_j$ with opposite orientation, it follows that there exists $d_j \in C_{n-1}(\partial B_j)\subseteq C_{n-1}(B_j)$ such that $\partial d_j = \partial \sigma_j + \partial \beta_j$.
	Therefore $b_j=d_j- \beta_j$ is a real relative fundamental cycle of $B_j$. 
	We set $c_j = \sigma_j - b_j \in C_{n-1}(\partial M_j)$.
	Since both $c_j$ and $\partial w_j$ represent a fundamental class of $\partial M_j$, there exists $u_j \in C_n(\partial M_j)$ such that $c_j - \partial w_j = \partial u_j$. 
	Therefore \[z_j=w_j + u_j \in C_n(M_j)\] is a real relative fundamental cycle of $(M_j,\partial M_j)$ such that $\partial z_j = \sigma_j - b_j$.
	In conclusion, since $f$ is orientation reversing, we have that $z=z_1 + \dots + z_k \in C_n(M)$ is a real relative fundamental cycle of $(M,\partial M)$.
\end{proof}
Finally, by Lemma \ref{lem: fundamental class of M} and the commutativity of (\ref{eq: diagram superadditivity}), we deduce that
\begin{align*}
	\langle \gamma, [M,\partial M] \rangle = c(z) & = \sum_{j=1}^k c(z_i) = \sum_{j=1}^k \langle \gamma, [z_j] \rangle  \\	
	& = \sum_{j=1}^k \langle H^n_b(q_j)(\gamma), [z_j] \rangle = \sum_{j=1}^k \langle H^n_b(\alpha_j)(\gamma_j), [z_j] \rangle  \\
	& = \sum_{j=1}^k \langle \gamma_j, [z_j] \rangle = \sum_{j=1}^k \langle \gamma_j, [M_j,\partial M_j] \rangle,  
\end{align*}
where, with an abuse of notation, $[z_j]$ represents the class represented by the same chain $z_j \in C_n(M_j)\subseteq C_n(M)$, either in $H_n(M,\partial M)$, $H_n(M_j,B_j)$ or $H_n(M_j,\partial M_j)$.
In conclusion, we get
\begin{align*}
	\sum_{j=1}^k\|M_j,\partial M_j\| & = \sum_{j=1}^k \langle \gamma_j, [M_j,\partial M_j] \rangle \\
	& = \langle \gamma, [M,\partial M]\rangle \\
	& \leq \|\gamma\|_\infty \cdot \|M,\partial M\| \\
	& \leq 2C_n(n+2)\cdot ((2C_n)^3(n+2)^2 + \varepsilon) \cdot  \|M,\partial M\|,
\end{align*}
and, since $\epsilon$ is arbitrary, we obtain inequality (\ref{eq: inequality superadditivity}).

\begin{rem}
	\label{rem: proof of superadditivity, improved}
	If $\partial M$ is $\pi_1$-injective in $M$ and $B_j$ is $\pi_1$-injective in $M_j$, we can improve the bi-Lipschitz constants of $\Psi^n$ and $\Psi_j^n$ according to Remark \ref{rem: not optimality of constant}. It follows that we can take 
	\[
	\|\gamma \|_\infty \leq 2C_n(n+2) \cdot \|\gamma'\|_\infty \leq 2C_n(n+2)\cdot ((2C_n)^2(n+2)^2 + \varepsilon)
	\]
	in the construction above, from which we deduce inequality (\ref{eq: inequality superadditivity, improved}).
\end{rem}

In order to conclude the proof of Proposition \ref{prop: superadditivity} we are left to prove Proposition \ref{prop: extension of bounded coclasses}.
Consider the submulticomplex \[\cup L_j = L_1 \cup \dots \cup L_k\] of $K$. Since every $L_j=\A(M_j)$ is connected and $K=\A(M)$ is connected by assumption, it follows that also $\cup L_j$ is connected. 
Let $p\colon \tilde{K}\rightarrow K$ denote the universal covering (simplicial) map (see Section \ref{subsection: universal covering of multicomplexes}).
Let $\hat{L}_j= p^{-1}(L_j)$ and $\hat{A}= p^{-1}(A)$.
The intersection $\hat{L}_i \cap \hat{L}_j$ consists of a collection of connected components of $\hat{A}$, for every $i\neq j$.
Consider the multicomplex 
\[
\cup \hat{L}_j = p^{-1}(\cup L_j)= \hat{L}_1 \cup \cdots \cup \hat{L}_k.
\]

Let $p_M\colon \tilde{M} \rightarrow M$ denote the universal covering map of the manifold $M$.
We describe the structure of $\tilde{M}$ as a \emph{tree of spaces}, following the construction in \cite[Chapter 9]{Fri17}.
We define a tree $T'$ as follows. 
We pick a vertex for every connected component of $p_M^{-1}(M_j)$, $j\in \{1,\dots, k\}$, and we join two vertices if the corresponding connected components intersect (along a connected component of $p_M^{-1}(\S)$).
Since $T'$ is a deformation retract of $\tilde{M}$, then it is connected and simply connected, i.e. a tree.

Similarly, we can describe the structure of $\cup \hat{L}_j$ as a \emph{tree of multicomplexes}.
We construct a tree $T$ as follows. We pick a vertex for every connected component of $\hat{L}_j$, $j\in \{1,\dots, k\}$, and we join two vertices if the corresponding components intersect (along a connected component of $\hat{A}$). Therefore, edges of $T$ correspond to the connected components of $\hat{A}$. 
Since the connected components of $\hat{L}_j$ and $\hat{A}$ bijectively correspond to the connected components of $p_M^{-1}(M_j)$ and $p_M^{-1}(\S)$, respectively, it follows that $T$ and $T'$ are simplicially isomorphic. Hence also $T$ is a tree.
We denote by $V(T)$ the set of vertices of $T$ and, for every $v \in V(T)$, we denote by $\tilde{L}_v$ the connected component of $\hat{L}_j$ corresponding to $v$.

\begin{lemma}
	\label{lem: universal coverings}
	For every $v \in V(T)$ there exists $j(v)\in \{1,\dots, k\}$ and a universal covering (simplicial) map $p_v\colon \tilde{L}_v\rightarrow L_{j(v)}$ such that the following diagram commutes
	\[
	\begin{tikzcd}
		\tilde{L}_v \arrow[r, "p_v"] \arrow[rd, "p|_{\tilde{L}_v}"'] & L_{j(v)} \arrow[d, hook] \\
		& K.                       
	\end{tikzcd}
	\]
	Moreover, for every path connected component $C$ of $\hat{A}$, there exists a universal covering (simplicial) map $p_C\colon C \rightarrow p(C)$ such that the following diagram commutes
	\[
	\begin{tikzcd}
		C \arrow[rd, "p|_{C}"'] \arrow[r, "p_C"] & p(C) \arrow[d, hook] \\
		& K.                
	\end{tikzcd}
	\]
\end{lemma}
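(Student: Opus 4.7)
The plan is to deduce both statements from the general principle that restricting a universal covering to a connected component of the preimage of a $\pi_1$-injective subspace gives the universal covering of that subspace. Fix $v \in V(T)$. By definition $\tilde{L}_v$ is a connected component of $\hat{L}_j = \pi^{-1}(L_j)$ for a unique $j = j(v) \in \{1,\dots,k\}$, and I would take this $j(v)$ as required. Since $L_{j(v)}$ is a submulticomplex of $K$ and $\pi\colon \tilde{K}\to K$ is a simplicial covering (Section \ref{subsection: universal covering of multicomplexes}), the preimage $\hat{L}_{j(v)}$ is a submulticomplex of $\tilde{K}$, and so is its connected component $\tilde{L}_v$.

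Next, I would define $\pi_v := \pi|_{\tilde{L}_v}$ and check it is a simplicial covering map onto $L_{j(v)}$: the restriction of a covering to the preimage of a subspace is a covering, and the restriction to a single component of the preimage is still surjective because $L_{j(v)}$ is connected. Commutativity of the triangle is immediate, as $\pi_v$ is by construction the restriction of $\pi$.

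The crucial step is to show $\tilde{L}_v$ is simply connected, so that $\pi_v$ is in fact the universal covering. For this I would invoke the standard fact from covering space theory that, for the universal cover $\tilde{K}\to K$ and a path-connected subspace $L_{j(v)}\subseteq K$, the image of $\pi_1(\tilde{L}_v)$ in $\pi_1(L_{j(v)})$ equals the kernel of $\pi_1(L_{j(v)})\to \pi_1(K)$. So it suffices to check that $\pi_1(L_{j(v)}) = \pi_1(\A(M_{j(v)}))\to \pi_1(\A(M)) = \pi_1(K)$ is injective. By Proposition \ref{prop: aspherical quotient induces isomorphism on fundamental groups}, this is equivalent to the injectivity of $\pi_1(M_{j(v)})\to \pi_1(M)$, which is ensured by condition $(iii)$ of Proposition \ref{prop: superadditivity} (admissibility of the pair $(M,M_{j(v)})$).

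For a path-connected component $C$ of $\hat{A}$, the same three-step argument applies: $C$ is a component of $\pi^{-1}(\pi(C))$, and $\pi(C)$ is a path-connected component of $A=\A(\S)$; hence $\pi|_C\colon C\to \pi(C)$ is a simplicial covering. Simple connectedness of $C$ reduces to the injectivity of $\pi_1(\pi(C))\to \pi_1(K)$, and this follows from condition $(v)$ (admissibility of $(M,\S)$) combined with Proposition \ref{prop: aspherical quotient induces isomorphism on fundamental groups}. There is no real obstacle here; the only point requiring a small amount of care is matching the multicomplex structure on $\tilde{L}_v$ and $C$ (inherited as submulticomplexes of $\tilde{K}$) with the topological covering structure, which is automatic from the construction of $\tilde{K}$.
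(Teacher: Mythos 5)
Your proposal is correct and follows essentially the same route as the paper: the paper also reduces everything to standard covering space theory plus the injectivity of $\pi_1(L_j\hookrightarrow K)$ and $\pi_1(\pi(C)\hookrightarrow K)$, which it likewise derives from the admissibility of $(M,M_j)$ and $(M,\Scal)$ via Proposition \ref{prop: aspherical quotient induces isomorphism on fundamental groups} (together with Corollary \ref{cor: projection L(X) to X is a homotopy equivalence of pairs} to pass from $\pi_1(M_j)$ to $\pi_1(L_j)$). Your write-up just makes explicit the covering-theoretic step that the paper leaves as "standard".
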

\begin{proof}
	The aspherical multicomplexes $K = \A(M)$, $L_j=\A(M_j)$ and $A=\A(\S)$ are classifying spaces for the fundamental groups of $M$, $M_j$ and $\S$ respectively \cite[Proposition 3.33]{FM23}.
	By assumption we know that $M_j$ and $\S$ are $\pi_1$-injective in $M$. It follows that also $L_j$ and $p(C)$ are $\pi_1$-injective in $K$.
	The statement now follows from standard covering theory.
\end{proof}
Observe that the 0-skeleton of $\cup L_j$ coincides with the 0-skeleton of $K=\A(M)$, hence, with the points of $M$.
\begin{lemma}
	\label{lem: pi_1(cup L_j in K) is an isomorphism}
	For every $x_0 \in K^0$, $\pi_1(\cup L_j \hookrightarrow K, x_0)$ is an isomorphism.
\end{lemma}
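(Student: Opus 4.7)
The plan is to prove surjectivity and injectivity of the homomorphism $\pi_1(\cup L_j\hookrightarrow K)$ separately, then conclude.

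For surjectivity, I will pick a basepoint $x_0$ that is a vertex of some $L_j$ (hence of $K$) and take a class in $\pi_1(K,x_0)$, represented by a continuous loop $\alpha$ in $|K|$. Via the homotopy equivalence of pairs $S\circ i\colon (|\Elle(M)|,|\Elle(\partial M)|)\rightarrow (M,\partial M)$ from Corollary \ref{cor: projection L(X) to X is a homotopy equivalence of pairs}, precomposed with a homotopy inverse of the projection $\pi\colon \Elle(M)\to\A(M)=K$, I transport $\alpha$ to a loop $\tilde\alpha\colon [0,1]\to M$ based at the point corresponding to $x_0$. Since $M=M_1\cup\cdots\cup M_k$ is a finite union along the codimension-one submanifold $\S$, after a small perturbation using the triangulation of $M$ and the collar neighbourhood of $\S$, one can subdivide $\tilde\alpha$ into finitely many sub-paths $\tilde\alpha_1,\dots,\tilde\alpha_N$, each contained in some $M_{j_\ell}$, with all subdivision points distinct and lying in $\S\cup\{x_0\}$. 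Each $\tilde\alpha_\ell$ represents, by Remark \ref{rmk: alternative characterization of L(X)}, an edge of $L_{j_\ell}=\A(M_{j_\ell})$, and the concatenation of these edges is a simplicial loop in $\cup L_j$ whose class in $\pi_1(K,x_0)$ coincides with the given one.

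For injectivity, the plan is to show that $\cup\hat L_j=\pi^{-1}(\cup L_j)\subseteq\tilde K$ is both connected and simply connected, which will exhibit it as a covering of $\cup L_j$ that is in fact universal, thereby forcing $\pi_1(\cup L_j\hookrightarrow K)$ to be injective. Connectedness follows from surjectivity, since the components of $\pi^{-1}(\cup L_j)$ biject with the cosets of the image of $\pi_1(\cup L_j)$ in $\pi_1(K)$. For simple connectedness I will use the tree-of-multicomplexes structure: by Lemma \ref{lem: universal coverings} each vertex piece $\tilde L_v$ is a universal cover of $L_{j(v)}$, and each edge piece (a connected component $C$ of $\hat A$) is a universal cover of $\pi(C)$; in particular both are simply connected and path connected. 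Any loop in $\cup\hat L_j$ is compact, hence contained in $\bigcup_{v\in V_0}\tilde L_v$ for some finite subtree $T_0\subseteq T$ with vertex set $V_0$. By induction on $|V_0|$, peeling off a leaf $v_0$ whose unique incident edge corresponds to a component $C$ of $\hat A$, and applying the Seifert--Van Kampen theorem to the decomposition $\tilde L_{v_0}\cup\bigcup_{v\in V_0\setminus\{v_0\}}\tilde L_v$ with intersection $C$, one concludes that $\bigcup_{v\in V_0}\tilde L_v$ is simply connected.

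Combining the two halves yields the statement: the inclusion is surjective on fundamental groups, and the connected simply connected cover $\cup\hat L_j$ of $\cup L_j$ exists inside $\tilde K$, so the inclusion is also injective. The main technical delicacy is in the surjectivity step, namely the perturbation argument that produces a subdivision of $\tilde\alpha$ whose break-points are all distinct and land in the interior of $\S$: this is where the triangulability of $M$ and the codimension-one nature of $\S$ are used. The inductive Seifert--Van Kampen argument for simple connectedness of $\cup\hat L_j$ is more routine, once Lemma \ref{lem: universal coverings} is in hand.
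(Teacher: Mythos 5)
Your proposal is correct, but it takes a genuinely different route from the paper. The paper's proof is a Bass--Serre argument: it composes $\pi_1(\cup L_j\hookrightarrow K)$ with the isomorphism $\pi_1(|K|)\to\pi_1(M)$ and observes that $\pi_1(\cup L_j)$ and $\pi_1(M)$ act on the simplicially isomorphic trees $T$ and $T'$ with vertex stabilizers $\pi_1(L_j)$ resp.\ $\pi_1(M_j)$ and edge stabilizers $\pi_1(A,x)$ resp.\ $\pi_1(\Scal,x)$, so that both groups are fundamental groups of the same graph of groups. You instead argue surjectivity by subdividing a loop of $M$ along $\Scal$ into edges of the $L_j$ (via Remark \ref{rmk: alternative characterization of L(X)}), and injectivity by showing that $\pi^{-1}(\cup L_j)$ is connected and simply connected, using Lemma \ref{lem: universal coverings} and an inductive Seifert--Van Kampen argument over finite subtrees of $T$; both halves go through, and your leaf-peeling step is sound because $T$ being a tree forces a leaf piece $\tilde L_{v_0}$ to meet the rest of the finite subunion in a single path-connected, simply connected component of $\hat A$. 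Your approach is more elementary (no graph-of-groups machinery) and has the added benefit of directly establishing what the paper only deduces afterwards from the lemma, namely that $\cup\hat L_j$ is connected and that $\pi|_{\cup\hat L_j}$ is a universal covering; the price is the hands-on transversality argument in the surjectivity step. Two small imprecisions to fix: the projection $\pi\colon\Elle(M)\to\A(M)$ is not a homotopy equivalence in general (it kills higher homotopy), so you should transport loop classes using only the fact that it induces an isomorphism on $\pi_1$ (\cite[Proposition 3.33]{FM23}) rather than invoking a homotopy inverse; and the finite set of vertices $v$ met by a compact loop need not span a subtree, so you should pass to its convex hull in $T$ before starting the induction.
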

\begin{proof}
	Let $x_0$ be a vertex of $K$.
	Let $g\colon \pi_1(\cup L_j, x_0) \rightarrow \pi_1(M, x_0)$ denote the composition of $\pi_1(\cup L_j \hookrightarrow K, x_0)$ with the map $\pi_1(|K|, x_0)\rightarrow\pi_1(M, x_0)$, induced by the projection $S_M\colon \K(M)\rightarrow M$. Since the latter is an isomorphism \cite[Proposition 3.33]{FM23}, in order to conclude it is enough to show that $g$ is an isomorphism.
	
	We argue via Bass-Serre theory.
	The group $\pi_1(M,x_0)$ acts simplicially on $T'$ with vertex stabilizers of the form $\pi_1(M_j, x)$, $j \in \{1,\dots, h\}$, and edge stabilizers of the form $\pi_1(\S,x)$, for some $x \in \S$.
	Similarly, we have that $\pi_1(\cup L_j, x_0)$ acts simplicially on $T$ with vertex stabilizers of the form $\pi_1(L_j, x)$, and edge stabilizers of the form $\pi_1(A,x)$, for some $x \in A^0=\S$. 
	We have that $g$ and the simplicial isomorphism between $T$ and $T'$ induce the same identification of the vertex and edge stabilizers of the actions $\pi_1(\cup L_j, x_0)\acts T$ and $\pi_1(M, x_0)\acts T'\cong T$.
	In this way, by Bass-Serre theory \cite{Ser02}, we get that $g$ is an isomorphism.
\end{proof}

By standard covering theory, it follows from Lemma \ref{lem: pi_1(cup L_j in K) is an isomorphism} that $\cup \hat{L}_j = p^{-1}(\cup L_j)$ is path-connected and that the map \[p|_{\cup \hat{L}_j}\colon \cup \hat{L}_j\rightarrow \cup L_j\] is a universal covering map.
Let $x_0$ be a vertex of $K$, hence, a point of $M$.
We denote by $\Gamma$ the fundamental group of $M$ based at $x_0$. By Lemma \ref{lem: pi_1(cup L_j in K) is an isomorphism} and \cite[Proposition 3.33]{FM23}, there are canonical isomorphisms
\[
\Gamma = \pi_1(M,x_0) \cong \pi_1(K,x_0) \cong \pi_1(\cup L_j,x_0).
\]
Therefore we can consider the simplicial action $\Gamma \acts \cup\hat{L}_j$ by deck transformations. Let $\Gamma_v$ denote the stabilizer of $\tilde{L}_v$ in $\Gamma$, and observe that $\Gamma_v$ is canonically isomorphic to $\pi_1(L_{j(v)}, x_v)$, for some $x_v \in L_{j(v)}^0=M_{j(v)}$.
Let $G = \Pi_M(\S)$ and let $\psi \colon G \rightarrow \Aut(K)$ denote the representation induced by the simplicial action $G\acts K$.
Consider the group
\[
\Lambda(G) = \bigl\{(f,g)\in \Aut(\tilde{K})\times G\;| \; p \circ f = \psi(g)\circ p \bigr\}.
\]
There is a canonical injection
\[
\Gamma \rightarrow \Lambda(G), \qquad \gamma \mapsto (\gamma, e),
\]
where $e$ denotes the neutral element of $G$, and a canonical projection
\[
\Lambda(G) \rightarrow G, \qquad (f,g)\mapsto g,
\]
such that the sequence $1 \rightarrow \Gamma \rightarrow \Lambda(G) \rightarrow G \rightarrow 1$ is exact.
Intuitively speaking, the group $\Lambda(G)$ encodes all the possible lifts to the universal covering of the endomorphisms induced by $G$.
By projecting on the first factor, $\Lambda(G)$ induces a simplicial action $\Lambda(G) \acts \tilde{K}$.
Let $\Lambda_v(G)\leq \Lambda(G)$ denote the stabilizer of $\tilde{L}_v$ in $\Lambda(G)$, $v \in V(T)$. 
After identifying $\Gamma$ with a subgroup of $\Lambda(G)$, we have that $\Gamma \cap \Lambda_v(G) = \Gamma_v$.
As an application of the unique lifting property for covering spaces, since the action $G \acts K$ preserves $L_j$ for every $j\in\{1,\dots, k\}$, the projection $\Lambda_v(G)\rightarrow G$ is surjective i.e. also the second row of the following commutative diagram
\[
\begin{tikzcd}
	1 \arrow[r] & \Gamma \arrow[r]             & \Lambda(G) \arrow[r]             & G \arrow[r]                                & 1 \\
	1 \arrow[r] & \Gamma_v \arrow[r] \arrow[u] & \Lambda_v(G) \arrow[r] \arrow[u] & G \arrow[r] \arrow[u, equal] & 1
\end{tikzcd}
\]
is exact.
We observe that the construction of $\Lambda(G)$ and $\Lambda_v(G)$ applies to every subgroup of $G$ whose action on $K$ preserve the subcomplexes $L_j$. In particular, it applies to the subgroup $H=\Pi_M(\partial \S)$. In this way, we can define groups $\Lambda(H)$ and $\Lambda_v(H)$ which fit in the same commutative diagram above (with $G$ replaced by $H$).\\

Recall that $K'=\A(\partial M)$ and $L_j'=\A(B_j)$ are not necessarily connected. 
Let $\hat{K}'=p^{-1}(K')$ and $\hat{L}'_j=p^{-1}(L'_j)$.
By Lemma \ref{lemma: homotopy tyoe  pe of A_X(A)}, the morphisms $\pi_1(L_j'\hookrightarrow K)$ and $\pi_1(K'\hookrightarrow K)$ are injective for every choice of basepoint. 
Therefore, inside $\hat{K}'$ (resp. $\hat{L}'_j$) we can find several copies of the universal covering of $K'$ (resp. of $L'_j$).
For every $v \in V(T)$, we set
\[
\hat{L}'_v = \hat{L}'_{j(v)}\cap \tilde{L}_v.
\]
It follows from our construction that the actions $\Lambda(H)\acts (\tilde{K}, \hat{K}')$ and $\Lambda_v(H)\acts (\tilde{L}_v,\hat{L}'_v)$ are well-defined simplicial actions on pairs. Moreover, one can deduce from Lemma \ref{lem: G acts K has orbits on L induced by H} that the action $\Lambda(G)\acts \tilde{K}$ has orbits in $\hat{K}'$ induced by $\Lambda(H)$, and similarly, the action $\Lambda_v(G)\acts \tilde{L}_v$ has orbits in $\hat{L}'_v$ induced by $\Lambda_v(H)$.\\

We proceed now with the proof of Proposition \ref{prop: extension of bounded coclasses}. Let $\epsilon > 0$. 
For every $j\in \{1,\dots, k\}$, we are given a coclass $\phi_j \in H^n(C^\bullet_b(L_j,L_j')^{G,H}_\alt)$. Let $f_j \in Z^n_b(L_j,L_j')^{G,H}_\alt$ be a representative of $\phi_j$ such that 
\[
\|f_j\|_\infty \leq \|\phi_j\|_\infty + \epsilon.
\]
For every $v \in V(T)$, we denote by $f_v \in Z^n_b(\tilde{L}_v)$ the pull-back of $f_j$ via the covering $p_v \colon \tilde{L}_v\rightarrow L_{j(v)}$. 
Since $f_v$ is both $G$ and $\Gamma_v$-invariant, it is $\Lambda_v(G)$-invariant too. 
Moreover, since $f_j$ vanishes on simplices supported on $L_j'$, it follows that $f_v$ vanishes on simplices supported on $\hat{L}_v'$, hence
\[
f_v\in Z^n_b(\tilde{L}_v, \hat{L}_v')^{\Lambda_v(G), \Lambda_v(H)}_\alt.
\]
In order to prove Proposition \ref{prop: extension of bounded coclasses}, it is sufficient to find a bounded cocycle \[f \in Z^n_b(\tilde{K}, \hat{K}')^{\Lambda(G), \Lambda(H)}_\alt\] which restricts to $f_v$ on each $C_n(\tilde{L}_v), v \in V(T)$, and is such that \[\|f\|_\infty \leq \max \bigl\{\|f_j\|_\infty \colon \; j\in \{1,\dots,k\} \bigr\}.\]

Let $s = (\sigma, (x_0, \dots, x_n)) \in C_n(\tilde{K})$ be an algebraic $n$-simplex. Since we work with alternating cochains, we can assume without loss of generality that $x_0,\dots,x_n \in \tilde{K}^0$ are the vertices of an $n$-simplex $\sigma$ of $\tilde{K}$ i.e. that $x_i\neq x_j$ for every $i\neq j$.
We already observe that $\tilde{K}$ and $\cup \hat{L}_j$ have the same 0-skeleton.
\begin{defn}
	\label{defn: simplicial path in cup L_j}
	Let $v_0,v_1 \in (\cup \hat{L}_j)^0$. A \emph{simplicial path in $\cup \hat{L}_j$} from $v_0$ to $v_1$ is a 1-dimensional submulticomplex of $\cup \hat{L}_j$ whose topological realization realizes a continuous path in $\bigl\lvert\cup \hat{L}_j\bigr\rvert$ from $v_0$ to $v_1$.
\end{defn}
In the previous definition, it is important that we do not allow generic edges in $\tilde{K}$, where every pair of distinct vertices can be connected by a unique edge (by Lemma \ref{lem: uniqueness of simplices in universal covering}).
Since $x_0,\dots,x_n \in \tilde{K}^0 = (\cup \hat{L}_j)^0$, we can consider simplicial paths in $\cup \hat{L}_j$ between them. 
\begin{defn}
	\label{defn: barycenter}
	A vertex $v \in V(T)$ is a \emph{barycenter} of $\sigma$ if, for every $i,j \in \{0,\dots, n\},\;i \neq j$, every simplicial path in $\cup \hat{L}_j$ from $x_i$ to $x_j$ has an edge $e$ such that $e$ is contained in $\tilde{L}_v$ and $e$ is not contained in $\hat{A}$.
\end{defn}
\begin{lemma}
	\label{lem: uniqueness of barycenter}
	Let $\sigma$ be a $n$-simplex of $\tilde{K}$. Then $\sigma$ has at most one barycenter. If $\sigma$ is supported in $\tilde{L}_v$, for some $v\in V(T)$, then $v$ is the barycenter of $\sigma$ if and only if every component of $\hat{A}$ contains at most one vertex of $\sigma$. If this is not the case, then $\sigma$ does not admit a barycenter. 
\end{lemma}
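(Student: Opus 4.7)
The plan is to translate the barycenter condition into combinatorics of the tree $T$. For each vertex $x_i$ of $\sigma$, set $T(x_i):=\{v\in V(T)\mid x_i\in\tilde L_v\}$; this is a subtree of $T$ with one or two vertices, and it has two vertices precisely when $x_i\in\hat A$, in which case those vertices are adjacent in $T$ and correspond to the unique component of $\hat A$ containing $x_i$ (since $T$ is a tree, two adjacent $\tilde L_v,\tilde L_u$ meet in a single component of $\hat A$). The key reformulation to establish first is that $v$ is a barycenter of $\sigma$ if and only if, for every $i\neq j$, either $T(x_i)=\{v\}$, or $T(x_j)=\{v\}$, or the sets $T(x_i)\setminus\{v\}$ and $T(x_j)\setminus\{v\}$ lie in distinct components of $T\setminus\{v\}$. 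I would prove this using two observations: first, a simplicial path in $\cup\hat L_j$ with no edge strictly inside $\tilde L_v$ is precisely a path in the submulticomplex obtained by deleting the interior of $\tilde L_v$, whose path components correspond bijectively to the components of $T\setminus\{v\}$; second, the first edge of any simplicial path leaving a vertex $x_i$ with $T(x_i)=\{v\}$ must lie in $\tilde L_v\setminus\hat A$, since such an edge lies in some $\tilde L_w$ with $w\in T(x_i)=\{v\}$ and has an endpoint outside $\hat A$.

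For Part 1 I would assume two distinct barycenters $v_1\neq v_2$ and label each $x_i$ by a pair $(\alpha_i,\beta_i)$, where $\alpha_i$ is the component of $T\setminus\{v_1\}$ containing $T(x_i)\setminus\{v_1\}$, with the convention $\alpha_i=\ast$ when $T(x_i)=\{v_1\}$, and $\beta_i$ is defined analogously with respect to $v_2$. The reformulation forces the $\alpha_i$ pairwise distinct among non-$\ast$ values (and likewise for $\beta_i$), and at most one $x_i$ can carry $\alpha_i=\ast$, respectively $\beta_i=\ast$. One checks that every realisable signature satisfies $\alpha_i=C_1$ (the component of $T\setminus\{v_1\}$ containing $v_2$), or $\beta_i=C_2$ (the component of $T\setminus\{v_2\}$ containing $v_1$), or has one coordinate equal to $\ast$: if $\alpha_i\notin\{\ast,C_1\}$, then $T(x_i)$ sits in a side-branch of $v_1$, which is entirely contained in $C_2$, forcing $\beta_i=C_2$. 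The admissible signatures form a star around $(C_1,C_2)$, and a short bipartite-matching argument shows that at most two $x_i$ can coexist, contradicting $n+1\geq 3$.

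For Part 2, assume $\sigma$ is supported in $\tilde L_v$, so $v\in T(x_i)$ for every $i$. The backward implication is a pairwise application of the reformulation: if $T(x_i)=\{v,u_i\}$ and $T(x_j)=\{v,u_j\}$ both have two elements, the hypothesis ``every component of $\hat A$ contains at most one vertex of $\sigma$'' excludes $u_i=u_j$, since otherwise $x_i$ and $x_j$ would both lie in the unique common $\hat A$-component $\tilde L_v\cap\tilde L_{u_i}$; hence $u_i\neq u_j$ and the projections fall in distinct components of $T\setminus\{v\}$. The forward implication, and Part 3 under the natural reading that ``this is not the case'' refers to failure of the $\hat A$-component condition, follow from a one-edge argument: if $x_i,x_j$ share an $\hat A$-component $C\subseteq\tilde L_v\cap\tilde L_u$, then Lemma \ref{lem: uniqueness of simplices in universal covering} applied inside the complete minimal simply connected $C$ (the universal cover of a component of $A$) yields a unique edge between them, which lies inside $\hat A$ and hence strictly inside no $\tilde L_v$; the corresponding single-edge simplicial path visits no vertex of $T$ strongly, so no barycenter can exist. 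The main obstacle is the bookkeeping for Part 1: identifying the star-structure of admissible signatures around $(C_1,C_2)$ is what reduces a potentially complicated matching problem to one of capacity two.
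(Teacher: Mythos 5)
Your proof is correct. Note that the paper itself gives no argument here: it only says the proof ``can be easily adapted from \cite[Lemma 9.5]{frigerio2017bounded}'', so you have in effect supplied the details that the paper outsources, and your tree-of-spaces combinatorics is exactly the spirit of that cited lemma. Your reformulation is right: paths in $\cup\hat{L}_j$ avoiding edges of $\tilde{L}_v\setminus\hat{A}$ are precisely paths in $Y=\bigcup_{w\neq v}\tilde{L}_w$, whose components correspond to those of $T\setminus\{v\}$, and your Parts 2--3 are clean applications of Lemma \ref{lem: uniqueness of simplices in universal covering}. One small imprecision in Part 1: the claim that at most one $x_i$ has $\alpha_i=\ast$ is \emph{not} forced by the reformulation at $v_1$ alone (the barycenter condition at $v_1$ happily allows many vertices with $T(x_i)=\{v_1\}$); it follows only by invoking the condition at $v_2$, since two such vertices both project to the component $C_2$ of $T\setminus\{v_2\}$. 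In fact you do not need that claim at all: since $\alpha_i=\ast$ forces $\beta_i=C_2$ and $\beta_i=\ast$ forces $\alpha_i=C_1$, your star structure gives that every vertex satisfies $\alpha_i=C_1$ or $\beta_i=C_2$, and the pairwise distinctness of the non-$\ast$ values in each coordinate already caps the count at two, contradicting $n+1\geq 3$. I would spell that final count out rather than gesture at a ``bipartite-matching argument''.
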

\begin{proof}
	The proof can be easily adapted from \cite[Lemma 9.5]{Fri17}.
\end{proof}
Let $v\in V(T)$ be fixed. We associate (quite arbitrarily) to $\sigma$ a $n$-simplex $\sigma_v$ with vertices $x_0',\dots, x_n'$ in such a way that the following conditions hold:
\begin{itemize}
	\item $\sigma_v$ is a $n$-simplex of $\tilde{L}_v$;
	\item if $x_i \in \tilde{L}_v$, then $x_i'=x_i$;
	\item if $x_i \notin \tilde{L}_v$, then there exists a unique component $C$ of $\tilde{L}_v\cap \hat{A}$ such that every simplicial path in $\cup \hat{L}_j$ joining $x_i$ with $\tilde{L}_v$ intersects $C$; in this case, we choose $x_i'$ to be any point of $C$.
\end{itemize}
By Lemma \ref{lem: uniqueness of simplices in universal covering}, since $\tilde{L}_v$ is connected, complete, minimal and aspherical, a simplex $\sigma_v$ with these properties exists and is also unique once we fix its vertices $x_0',\dots, x_n'$.
The simplex $\sigma_v$ is \emph{a projection of $\sigma$ on $\tilde{L}_v$}, and plays the role of the \emph{central simplex} of Kuessner's work \cite{Kue15}.
We denote by $s_v= (\sigma_v,( x_0',\dots, x_n'))$ the algebraic simplex associated to $\sigma_v$.
\begin{lemma}
	\label{lem: projections are well defined up to H-translates}
	Let $s_v=(\sigma_v,(x_0,\dots,x_n))$ and $\bar{s}_v=(\bar{\sigma}_v,(\bar{x}_0,\dots,\bar{x}_n))$ be algebraic simplices in $C_n(\tilde{L}_v)$, $v \in V(T)$. 
	Then $\sigma_v$ and $\bar{\sigma}_v$ are two projections of the same simplex $\sigma \in \tilde{K}$ on $\tilde{L}_v$ if and only if there exists $\lambda \in \Lambda_v(G)$ such that $\lambda\cdot \sigma_v = \bar{\sigma}_v$ and $\lambda\cdot x_i = \bar{x}_i$, for every $i\in \{0,\dots,n\}$.
\end{lemma}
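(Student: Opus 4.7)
The plan is to reduce the equivalence to a statement about the $H_v$-action on the $0$-skeleton of $\tilde{L}_v$. By Lemma~\ref{lem: universal covering is complete and minimal} and the asphericity of $L_{j(v)}$, the multicomplex $\tilde{L}_v$ is complete, minimal, and aspherical; hence any non-degenerate $n$-simplex of $\tilde{L}_v$ is determined by its vertex set. Consequently the identity $h \cdot \sigma_v = \bar{\sigma}_v$ is automatic once the vertex conditions $h \cdot x_i = \bar{x}_i$ are satisfied, and the task becomes finding $h \in H_v$ realising the prescribed permutation of vertices.

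For the main ($\Rightarrow$) direction, suppose $\sigma_v$ and $\bar{\sigma}_v$ are both projections of a common $\sigma \in \tilde{K}$ with vertices $y_0,\dots,y_n$. For indices $i$ with $y_i \in \tilde{L}_v$, the projection rule forces $x_i = \bar{x}_i = y_i$, so $h$ must fix these vertices. For each $i$ with $y_i \notin \tilde{L}_v$, both $x_i$ and $\bar{x}_i$ lie in the same connected component $C_i$ of $\tilde{L}_v \cap \hat{A}$; by Lemma~\ref{lem: universal coverings} the restriction $\pi|_{C_i}$ is a universal covering onto a component of $L_{j(v)} \cap A$, and since connected components of $A = \A(\S)$ biject with connected components of $\S$, the projected vertices $\pi(x_i), \pi(\bar{x}_i)$ lie in a common component of $\S$. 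I would therefore assemble an element $g \in G = \Pi_M(\S)$ from a choice of paths in $\S$ joining $\pi(x_i)$ to $\pi(\bar{x}_i)$ for each such $i$ (and constant paths elsewhere), produce a lift $\tilde g \in H_v$ using the surjectivity of the projection $H_v \to G$, and adjust $\tilde g$ by a suitable element of the deck transformation subgroup $\Gamma_v \leq H_v$ so as to achieve $h \cdot x_i = \bar{x}_i$ simultaneously for all $i$.

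The converse ($\Leftarrow$) implication is the easier one. Given $h \in H_v$ with $h \cdot x_i = \bar{x}_i$, I would construct a witnessing simplex $\sigma \in \tilde{K}$ by taking $y_i = x_i$ when $h$ fixes $x_i$, and otherwise selecting $y_i$ in a neighbouring piece $\tilde{L}_w$ of the tree of multicomplexes whose intersection with $\tilde{L}_v$ is the component of $\tilde{L}_v \cap \hat{A}$ containing both $x_i$ and $\bar{x}_i$. The fact that $h \in H_v$ preserves $\tilde{L}_v$ setwise, together with the observation that its image in $G$ moves $0$-simplices only along paths in $\S$ (and hence preserves the component-of-$\S$ structure when lifted to $\hat{A}$), guarantees that such neighbours $\tilde{L}_w$ exist and that the resulting $s_v$ and $\bar{s}_v$ are indeed both projections of $\sigma$.

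The principal obstacle is the final $\Gamma_v$-adjustment in the forward direction: a single deck transformation must validate the vertex condition for every $i$ at once. The key is the simple-connectedness of each $C_i$, which by Lemma~\ref{lem: universal coverings} is a universal cover of its image in $L_{j(v)} \cap A$. Once the lift $\tilde g$ is normalised at a single index, the lifts of all paths used to define $g$ have uniquely determined endpoints inside the respective $C_i$'s, and by construction those endpoints coincide with the prescribed $\bar{x}_i$.
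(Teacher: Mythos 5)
Your argument runs on the same mechanism as the paper's: reduce everything to the action on vertices using completeness, minimality and asphericity of $\tilde{L}_v$ together with the uniqueness of edges in a simply connected complete and minimal multicomplex (Lemma \ref{lem: uniqueness of simplices in universal covering}); realize the required vertex displacement by an element of $G=\Pi_M(\S)$ supported on $\S$; and pin down its lift to $H_v$ by the unique lifting property. The organizational difference is that the paper argues by induction on the cardinality of $\{i : x_i\neq \bar{x}_i\}$, so that at each step exactly one vertex moves and the element of $G$ is a single transposition $g=\{\tau,\bar{\tau}\}$ (or a single loop), whereas you move all vertices simultaneously. The simultaneous version is workable but leaves two points under-justified in the forward direction. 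First, the paths cannot be \emph{arbitrary} paths in $\S$ from $\pi(x_i)$ to $\pi(\bar{x}_i)$: the lift of such a path starting at $x_i$ ends at the point of the fiber over $\pi(\bar{x}_i)$ inside $C_i$ determined by the path's homotopy class, and this point is $\bar{x}_i$ only if $\gamma_{\pi(x_i)}$ is chosen to be the projection of a path in $C_i$ from $x_i$ to $\bar{x}_i$ (equivalently, of the unique edge of $C_i$ joining them). Your appeal to "by construction" in the last paragraph hides exactly this choice; simple-connectedness of $C_i$ guarantees the lifted endpoint is \emph{well defined}, not that it equals $\bar{x}_i$. Second, a finite family of nonconstant paths is an element of $\Omega(\S)$ only if the endpoint map extends to a finitely supported permutation; when several vertices move at once, the partial injection $\pi(x_i)\mapsto\pi(\bar{x}_i)$ may form chains (e.g.\ $\pi(\bar{x}_i)=\pi(x_j)$ is not excluded) and must be completed to a permutation by paths that still lie in $\S$ and connect points in the correct components. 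For a single transposition both issues are vacuous, which is precisely what the paper's induction buys. Neither issue is fatal --- decomposing your $g$ into one-vertex moves recovers the paper's proof --- but as written the forward direction is not complete. Your treatment of the converse, via an explicit witnessing simplex in adjacent pieces of the tree, is correct and in fact more detailed than the paper's one-line justification.
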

\begin{proof}
	Assume that $\sigma_v$ and $\bar{\sigma}_v$ are two projections on $\tilde{L}_v$ of the same $n$-simplex $\sigma$ of $\tilde{K}$. We have already observed that the simplices $\sigma_v$ and $\bar{\sigma}_v$ depend only on their vertices. 
	Therefore, we need to show that, in the construction of $\sigma_v$ and $\bar{\sigma}_v$, the choice of two different points in the same connected component of $\hat{A}$ results in two simplices which are equivalent under the action of $\Lambda_v(G)$.
	By arguing inductively on the cardinality of $\{i \in \{0,\dots, n\} \,|\, x_i\neq \bar{x}_i \}$, we can assume that $x_i = \bar{x}_i$, for every $i\in \{1,\dots, n\}$, and that $x_0$ and $\bar{x}_0$ lie in the same connected component $C$ of $\hat{A}$. 
	By Lemma \ref{lem: universal coverings}, $C$ is the universal covering of some connected component of $A$.
	Hence, by Lemma \ref{lem: uniqueness of simplices in universal covering}, there exists a unique edge $e$ of $C$ with endpoints $x_0$ and $\bar{x}_0$. The edge $\pi(e) \in \A(\S)^1$ identifies an element of $G=\Pi_M(\S)$ in the following way.
	Let $\tau \colon [0,1]\rightarrow \S$ be a path in $\S$ from $\pi(x_0)$ to $\pi(\bar{x}_0)$ representing $\pi(e)$. 
	If $\tau$ is a loop, we set $g=\{\tau\} \in G$. Otherwise, we set $g = \{\tau, \bar{\tau}\} \in G$. By the unique lifting property of covering spaces, there exists a unique lift $\lambda \in \Lambda_v(G)$ of $g$ to the universal covering such that $\lambda \cdot x_i =\bar{x}_i$ for every $i\in \{0,\dots, n\}$.
	Therefore, by Lemma \ref{lem: uniqueness of simplices in universal covering}, we deduce that $\lambda\cdot \sigma_v=\bar{\sigma}_v$ (since $\tilde{L}_v$ is complete, minimal and aspherical, this is sufficient to be true on the 1-skeleton).
	The converse implication is clear, since elements of $\Lambda_v(G)$ preserve the connected components of $\hat{A}$.
\end{proof}
We now set, for every $v \in V(T)$,
\[
\hat{f}_v(s) = f_v(s_v).
\]
Since $f_v$ is $\Lambda_v(G)$-invariant, it follows from Lemma \ref{lem: projections are well defined up to H-translates} that $\hat{f}_v(s)$ is indeed well defined i.e. it does not depend on the choice of $s_v$. The following lemma rephrases in our setting the crucial observation in Lemma \ref{lema: a crucial observation}.
\begin{lemma}
	\label{lem: if v not a barycenter, then f_v = 0}
	If $v$ is not a barycenter of $\sigma$, then $\hat{f}_v(s) =0$.
\end{lemma}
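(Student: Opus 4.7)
The plan is to reduce this to an application of the crucial observation (Lemma \ref{lema: a crucial observation}) inside the piece $\tilde{L}_v$. Write $A_v := \tilde{L}_v \cap \hat{A}$ for the union of the components of $\hat{A}$ contained in $\tilde{L}_v$. The cocycle $f_v$ is $H_v$-invariant and alternating, and the exact sequence $1 \to \Gamma_v \to H_v \to G \to 1$ together with the unique lifting property (already exploited in the proof of Lemma \ref{lem: projections are well defined up to H-translates}) shows that every class $\{e, \bar{e}\} \in \Pi_{\tilde{L}_v}(A_v)$, for $e$ an edge contained in some component $C$ of $A_v$, lifts to an element of $H_v$ swapping the endpoints of $e$ and fixing the remaining vertices of $\tilde{L}_v$. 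Therefore, as soon as two of the projected vertices $x_i', x_j'$ lie in the same component of $A_v$, a verbatim transcription of the argument of Lemma \ref{lema: a crucial observation} finishes the job: the unique edge $e$ between $x_i'$ and $x_j'$ in $\tilde{L}_v$ (provided by Lemma \ref{lem: uniqueness of simplices in universal covering}) lies in that common component, its lift $h \in H_v$ permutes only $x_i'$ and $x_j'$ and hence fixes $\sigma_v$ by the asphericity of $\tilde{L}_v$ and uniqueness of edges, so that $f_v(s_v) = f_v(h \cdot s_v) = -f_v(s_v) = 0$.

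It therefore remains to establish the geometric claim that the failure of the barycenter condition at $v$ forces two of the projections $x_0', \dots, x_n'$ to lie in a common component of $A_v$. When $\sigma$ is already supported in $\tilde{L}_v$, we have $\sigma_v = \sigma$ and the claim is exactly Lemma \ref{lem: uniqueness of barycenter}. In the general case I would exploit the tree-of-multicomplexes structure of $\cup \hat{L}_j$ given by $T$: by hypothesis there exist $i \neq j$ and a simplicial path $\gamma$ from $x_i$ to $x_j$ in $\cup \hat{L}_j$ whose edges inside $\tilde{L}_v$ all lie in $A_v$. Each edge of $\gamma$ is contained in a single piece $\tilde{L}_w$, and two consecutive edges share a vertex which forces the corresponding pieces to be equal or adjacent in $T$; thus $\gamma$ induces a walk in $T$. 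Either this walk avoids $v$, in which case $x_i$ and $x_j$ lie in the same component of $T \setminus \{v\}$ and the projection-components $C_{x_i}, C_{x_j}$ coincide; or the walk visits $v$, in which case every maximal subpath of $\gamma$ inside $\tilde{L}_v$ is a connected walk in $A_v$ and hence contained in a single component of $A_v$, and the tree structure prevents $\gamma$ from traveling between distinct components of $A_v$ without crossing an edge in $\tilde{L}_v \setminus \hat{A}$. Consequently all such subpaths lie in one and the same component $C$, which contains both $x_i'$ and $x_j'$ by definition of the projections.

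The main obstacle is precisely the tree-theoretic bookkeeping of the second paragraph: $\gamma$ may enter and leave $\tilde{L}_v$ several times, and one has to rule out the possibility that successive visits land in distinct components of $A_v$. This amounts to the fact that removing the open piece $\tilde{L}_v \setminus A_v$ from $\cup \hat{L}_j$ disconnects the complement into connected regions indexed by the edges of $T$ at $v$, one region per component of $A_v$. Once this geometric step is in place, the algebraic conclusion is the direct transcription of Lemma \ref{lema: a crucial observation} outlined in the first paragraph, combined with the lifting construction already carried out in the proof of Lemma \ref{lem: projections are well defined up to H-translates}.
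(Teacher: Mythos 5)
Your argument is correct and is essentially the paper's: one first shows that the failure of the barycenter condition at $v$ forces two vertices of the projection $\sigma_v$ into a single connected component of $\hat{A}\cap\tilde{L}_v$, and then the swap of those two vertices by a lift $h\in H_v$ of an element of $\Pi_M(\S)$, constructed exactly as in Lemma~\ref{lem: projections are well defined up to H-translates}, kills the alternating $H_v$-invariant cochain by the mechanism of Lemma~\ref{lema: a crucial observation}. The only difference is that you carry out the tree-of-multicomplexes bookkeeping for the geometric step explicitly (and in the general case where $\sigma$ is not supported in $\tilde{L}_v$), whereas the paper delegates it to Lemma~\ref{lem: uniqueness of barycenter}, i.e.\ to the adaptation of the barycenter analysis of \cite[Chapter~9]{frigerio2017bounded}.
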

\begin{proof}
	If $v$ is not a barycenter of $\sigma$, then there exists a connected component $C$ of $\hat{A}\cap \tilde{L}_v$ which contains at least two vertices of $\sigma_v$, say $x_i$ and $x_j$, with $i,j \in \{0,\dots, n\}$ (Lemma \ref{lem: uniqueness of barycenter}).
	By Lemma \ref{lem: uniqueness of simplices in universal covering}, there exists a unique edge $e$ of $C$ with endpoints $x_i$ and $x_j$.
	Using the very same argument of Lemma \ref{lem: projections are well defined up to H-translates}, we can prove that there exists an element $\lambda \in \Lambda_v(G)$ such that $\lambda\cdot \sigma_v = \sigma_v$, $\lambda \cdot x_i = x_j$, $\lambda\cdot x_j = x_i$ and $\lambda\cdot x_l = x_l$ for every $l \neq i,j$.
	Since $f_v$ is alternating and $\Lambda_v(G)$-invariant, we obtain that $\hat{f}_v(s)=f_v(s_v)=0$.
\end{proof}
We are ready to define the cochain $f$ as follows: for every algebraic simplex $s=(\sigma,(x_0,\dots, x_n))\in C_n(\tilde{K})$ we set
\[
f(s) = \sum_{v \in V(T)} \hat{f}_v(s).
\]
By Lemma \ref{lem: uniqueness of barycenter} and Lemma \ref{lem: if v not a barycenter, then f_v = 0}, the sum on the right hand is either empty or consists of a single term. From this we already deduce that
\[
\|f\|_\infty \leq \max \bigl\{ \|f_v\|_\infty:\; v \in V(T) \bigr\} = \max \bigl\{ \|f_j\|_\infty :\; j \in \{1,\dots, k\} \bigr\}.
\]
Since $f_v$ is alternating for every $v \in V(T)$, it follows that also $f$ is alternating. Moreover, it is clear from the construction that $f$ is $\Lambda(G)$-invariant, so
\[
f \in C^n_b(\tilde{K})^{\Lambda(G)}_\alt.
\]
Assume that the simplex $\sigma$ is supported on $\tilde{L}_v$, for some $v \in V(T)$. 
Then we can set $s_v=s$, and from Lemma \ref{lem: uniqueness of barycenter} and Lemma \ref{lem: if v not a barycenter, then f_v = 0}, we deduce that $f(s)=f_v(s)$. 
Therefore, we have that $f$ indeed restricts to $f_v$ on simplices supported on $\tilde{L}_v$.
In order to show that $f$ defines a cocycle, let $s=(\sigma, (x_0,\dots, x_{n+1}))$ be an algebraic $(n+1)$-simplex, where $\sigma$ is a $(n+1)$-simplex of $\tilde{K}$. 
Let $\sigma_v$ denote the projection of $\sigma$ on $\tilde{L}_v$, according to the same procedure that was described for $n$-simplices. It readily follows from the construction that $\partial_i(\sigma_v)$ is a projection of $\partial_i\sigma$ on $\tilde{L}_v$, hence
\begin{align*}
	\hat{f}_v(\partial s) &  = \sum_{i=0}^{n+1} (-1)^i \hat{f}_v((\partial_i\sigma),(x_0,\dots,\hat{x}_i,\dots,x_{n+1}))\\
	& =  \sum_{i=0}^{n+1} (-1)^i f_v((\partial_i\sigma)_v,(x_0',\dots,\hat{x}_i',\dots,x_{n+1}'))\\
	& =  \sum_{i=0}^{n+1} (-1)^i f_v(\partial_i(\sigma_v),(x_0',\dots,\hat{x}_i',\dots,x_{n+1}'))\\
	& = f_v(\partial s_v) = 0,
\end{align*}
where the last equality follows from the fact that $f_v$ is a cocycle.
Therefore we have that
\[
f(\partial s)=\sum_{v \in V(T)}\hat{f}_v(\partial s) = 0,
\]
so $f$ is a cocycle.

In order to conclude, we need to show that $f$ vanishes on simplices supported on $\hat{K}'$. Let $s= (\sigma,(x_0,\dots,x_n)) \in C_n(\hat{K}')$. 
By Lemma \ref{lem: if v not a barycenter, then f_v = 0}, it is sufficient to prove that $f_v(s_v)=0$, where $v\in V(T)$ is the barycenter of $\sigma$. 
We have that $\sigma$ is contained in some connected component $C$ of $\hat{K}'$. Observe now that $C$ either is contained $\tilde{L}_v$, or it does intersect some connected components of $\hat{A}$. In the first case, since $v$ is the barycenter of $\sigma$, we can take $\sigma=\sigma_v$. 
It follows that $\sigma_v \in \hat{K}'\cap \tilde{L}_v\subseteq \hat{L}_v'$.
In the second case, we can clearly construct a projection $\sigma_v$ of $\sigma$ on $\hat{L}_v'$. 
In both cases, since $f_v$ vanishes on simplices supported on $\hat{L}_v'$, we get $f_v(s_v)=0$.

\section{Gromov equivalence theorem}
\label{sec: gromov equivalence theorem}
Let $(X,Y)$ be a pair of topological spaces. Let $A\subseteq Y$ be the union of some connected components of $Y$.
We start by introducing the following one-parameter family of seminorms on $H_n(X,Y)$, depending on $A$ (see \cite[Section 6.5]{Thu79}).
Let $\theta \in [0,\infty)$. We identify $C_n(X,Y)$ with the quotient of $C_n(X)$ by $C_n(Y)\subseteq C_n(X)$. We want to define a norm depending on $\theta$ and $A$ on the space of cycles $Z_n(X,Y)\subseteq C_n(X,Y)$. Let $c\in Z_n(X,Y)$. Since $\partial c=0$, it follows that every cycle $b\in C_n(X)$ representing $c$ satisfies $\partial b \in C_{n-1}(Y)$. 
There is a canonical projection $C_{n-1}(Y)\rightarrow C_{n-1}(A)$, and we denote by $\partial b|_{A}\in C_{n-1}(A)$ the image of $\partial b$ under this projection. We set
\[
\|c\|_1^A(\theta)= \inf \bigl\{ \|b\|_1 + \theta \|\partial b|_{A}\|_1 \;|\; b \in C_n(X) \text{ is a representative of } c \bigr \}.
\]
This indeed defines a norm on $Z_n(X,Y)$, that is equivalent to the usual norm $\|\cdot\|_1 = \|\cdot\|_1^A(0)$, and induces a quotient seminorm on $H_n(X,Y)$, still denoted by $\|\cdot\|_1^A(\theta)$.

The goal of this section is to give a proof of the following version of Gromov Equivalence Theorem \cite[Section 4.1]{Gro82}\cite[Theorem 5]{BBF+14}. This was originally stated in \cite{Kue15} without assumptions on higher homotopy.
\begin{thm}[Gromov Equivalence Theorem]
	\label{thm: equivalence theorem}
	Let $(X,Y)$ be a good pair and let $A\subseteq Y$ be the union of some connected components of $Y$. 
	If every connected component of $A$ has an amenable fundamental group, then the seminorms $\|\cdot\|_1^A(\theta)$ on $H_n(X,Y)$ coincide for every $\theta \in [0,\infty)$.
\end{thm}
The previous result admits the following equivalent formulation, which can be deduced as in \cite[Corollary 6]{BBF+14}.
\begin{cor}
	\label{cor: equivalence theorem, equivalent reformulation}
	Let $(X,Y)$ be a good pair of topological spaces. Let $A\subseteq Y$ be the union of some connected components of $Y$ and assume that every connected component of $A$ has an amenable fundamental group. Let $\alpha \in H_n(X,Y)$, $n\in \N_{\geq 2}$. 
	Then, for every $\epsilon \ge 0$, there exists an element $c\in C_n(X)$ with $\partial c \in C_{n-1}(Y)$ such that $[c]=\alpha \in H_n(X,Y)$, $\|c\|_1\leq \|\alpha\|_1 + \epsilon$ and $\|\partial c|_{A}\|_1< \epsilon$.
\end{cor}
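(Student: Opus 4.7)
The plan is to deduce this reformulation as a direct and essentially formal consequence of Proposition~\ref{prop: equivalence theorem}, by unwinding the definition of the parametric seminorm $\|\cdot\|_1^A(\theta)$ for a suitably large value of $\theta$.

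First, I would record that the case $\theta = 0$ in the definition of $\|\cdot\|_1^A(\theta)$ recovers the ordinary $\ell^1$-seminorm on $H_n(X,Y)$, so $\|\alpha\|_1 = \|\alpha\|_1^A(0)$. Under the hypotheses on $A$ and $(X,Y)$, Proposition~\ref{prop: equivalence theorem} then gives $\|\alpha\|_1^A(\theta) = \|\alpha\|_1$ for every $\theta \in [0,\infty)$.

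Given $\epsilon > 0$, the idea is to pick $\theta$ large enough that any representative realizing the infimum defining $\|\alpha\|_1^A(\theta)$ up to an error $\epsilon/2$ is automatically forced to have small boundary part on $A$. Concretely, I would choose $\theta > 0$ so that $\theta \cdot \epsilon > \|\alpha\|_1 + \epsilon/2$. Using the infimum in the definition of $\|\alpha\|_1^A(\theta)$, there exists $c \in C_n(X)$ with $\partial c \in C_{n-1}(Y)$ representing $\alpha$ and satisfying
\[
\|c\|_1 + \theta \|\partial c|_A\|_1 \leq \|\alpha\|_1^A(\theta) + \epsilon/2 = \|\alpha\|_1 + \epsilon/2.
\]
Since both summands on the left are nonnegative, this immediately gives $\|c\|_1 \leq \|\alpha\|_1 + \epsilon/2 \leq \|\alpha\|_1 + \epsilon$ and $\|\partial c|_A\|_1 \leq (\|\alpha\|_1 + \epsilon/2)/\theta < \epsilon$, as required.

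Because all the substance already lies in Proposition~\ref{prop: equivalence theorem}, this corollary presents no real obstacle: it is a one-step unpacking of the equality of seminorms, exploiting the fact that the $\theta$-weighted norm penalises large boundary contributions on $A$ arbitrarily strongly as $\theta \to \infty$.
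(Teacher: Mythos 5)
Your argument is correct and is exactly the intended (omitted) deduction of the corollary from Proposition \ref{prop: equivalence theorem}: choosing $\theta$ large and extracting a near-optimal representative from the combined infimum defining $\|\alpha\|_1^A(\theta)$ forces both bounds simultaneously. The only caveat is that the statement's ``$\epsilon \ge 0$'' must be read as $\epsilon > 0$ (the conclusion $\|\partial c|_A\|_1 < \epsilon$ is vacuous for $\epsilon = 0$), which your proof implicitly and reasonably assumes.
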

Our proof of Theorem \ref{thm: equivalence theorem} uses the same construction of the mapping cone complex in \cite{BBF+14}.
We denote by $i_n \colon C_n(Y)\rightarrow C_n(X)$ the map induced by the inclusion $i\colon Y\hookrightarrow X$. We consider the complex $(C_\bullet(Y\hookrightarrow X), \bar{d}_\bullet)$, where
\[
C_n(Y\hookrightarrow X)= C_n(X)\oplus C_{n-1}(Y), \qquad \bar{d}_n(u,v)=(\partial_n u + i_{n-1}(v), -\partial_{n-1}(v)).
\]
We denote by $Z_\bullet(Y\hookrightarrow X)$ the corresponding cocycles and by $H_\bullet(Y\hookrightarrow X)$ its homology. For every $n\in \N$ and $\theta \in [0,\infty)$, we endow $C_n(Y\hookrightarrow X)$ with the norm
\[
\|(u,v)\|_1(\theta)= \|u\|_1 + \theta \|v\|_1.
\]
In a similar but not equivalent way, we endow $Z_n(Y\hookrightarrow X)$ with the norm
\[
\|(u,v)\|^A_1(\theta)= \|u\|_1 + \theta \|v|_{A}\|_1,
\]
which induces a seminorm, still denoted by $\|\cdot\|^A_1(\theta)$, on $H_\bullet(Y\hookrightarrow X)$.
Consider the chain map
\[
\beta_n \colon C_n(Y\hookrightarrow X) \rightarrow C_n(X,Y), \qquad \beta_n(u,v)=[u].
\]
The very same argument of \cite[Lemma 5.1]{BBF+14} can be used to prove the following.
\begin{lemma}
	\label{lem: mapping cone, isomorphism in homology}
	The map
	\[
	H_n(\beta_n) \colon (H_n(Y\hookrightarrow X), \|\cdot\|_1^A(\theta))\rightarrow (H_n(X,Y), \|\cdot\|_1^A(\theta))
	\]
	is an isometric isomorphism for every $\theta \in [0,\infty)$.
\end{lemma}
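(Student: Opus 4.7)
The plan is to follow the standard mapping cone argument as in \cite{bbf+2014isometric}, treating algebraic bijectivity and the isometric statement separately. The key observation that drives everything is that a pair $(u,v) \in Z_n(Y\rightarrow X)$ is a cycle exactly when $\partial u + i_{n-1}(v) = 0$, which forces $u$ to be a relative cycle in $C_n(X,Y)$ and $v = -\partial u$ (viewing $\partial u$ in $C_{n-1}(Y)$ via the injection $i_{n-1}$). In particular, $v$ is uniquely determined by $u$ at the chain level.

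I would first show that $H_n(\beta_n)$ is an isomorphism by exhibiting an explicit inverse at the homology level. Given a relative cycle $c \in C_n(X)$ with $\partial c \in C_{n-1}(Y)$, the pair $(c, -\partial c)$ is a mapping-cone cycle since $\bar{d}_n(c, -\partial c) = (\partial c - \partial c, -\partial(-\partial c)) = (0,0)$, and clearly $\beta_n(c, -\partial c) = [c]$. To see the assignment $[c] \mapsto [(c, -\partial c)]$ is well-defined, I would write $c' - c = \partial e + i_n(f)$ with $e \in C_{n+1}(X)$ and $f \in C_n(Y)$ and verify
\[
(c', -\partial c') - (c, -\partial c) = (\partial e + f, -\partial f) = \bar{d}_{n+1}(e, f),
\]
so the two mapping-cone cycles are cohomologous. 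Combined with the uniqueness of $v$ from the cycle condition, this both defines a two-sided inverse and shows $H_n(\beta_n)$ is bijective.

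For the isometric statement I would argue two matching inequalities. For $\leq$, any representative $(u,v)$ of $\alpha \in H_n(Y\rightarrow X)$ gives a relative cycle representative $u$ of $\beta_n(\alpha)$ with $\partial u |_A = -v|_A$, hence
\[
\|u\|_1 + \theta \|\partial u|_A\|_1 = \|u\|_1 + \theta \|v|_A\|_1 = \|(u,v)\|_1^A(\theta);
\]
passing to infima on both sides yields $\|H_n(\beta_n)(\alpha)\|_1^A(\theta) \leq \|\alpha\|_1^A(\theta)$. For the reverse inequality, any representative $c$ of $\beta_n(\alpha)$ produces the cycle $(c, -\partial c)$ representing $\alpha$, whose mapping-cone norm equals $\|c\|_1 + \theta \|\partial c|_A\|_1$; infimizing over $c$ gives $\|\alpha\|_1^A(\theta) \leq \|H_n(\beta_n)(\alpha)\|_1^A(\theta)$.

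Honestly I do not anticipate a genuine obstacle here: the whole argument is a bookkeeping exercise in the definition of the mapping cone, and the key trick -- that the cycle condition $(u,v) \in Z_n(Y \rightarrow X)$ forces $v = -\partial u$ so that the two seminorms are literally measuring the same data -- is precisely what makes the norms match without any averaging or amenability input. The only mildly delicate point to verify carefully is the sign in the identification $\bar{d}_{n+1}(e,f) = (\partial e + f, -\partial f)$, which ensures that the assignment $[c] \mapsto [(c,-\partial c)]$ descends to homology.
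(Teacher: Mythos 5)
Your proof is correct and is essentially the same bookkeeping argument that the paper delegates to \cite[Lemma 5.1]{bbf+2014isometric}: the cycle condition in the mapping cone forces $v=-\partial u$, so cycles and boundaries on both sides match up and the two seminorms measure identical data. No further comment is needed.
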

The dual chain complex of $(C_\bullet(Y\hookrightarrow X), \|\cdot\|_1(\theta))$ is the mapping cone complex associated to the chain map $i^\bullet \colon C^\bullet_b(X)\rightarrow C^\bullet_b(Y)$ (see Section \ref{sec: mapping cones}), i.e. the complex $(C^\bullet_b(Y\hookrightarrow X), \bar{\delta}^\bullet)$, where 
\[
C^n_b(Y\hookrightarrow X) = C^n_b(X)\oplus C^{n-1}_b(Y), \qquad \bar{\delta}^n(f,g)=(\delta^nf, -i^n(f)- \delta^{n-1}g).
\]
In our setting, $C^n_b(Y\hookrightarrow X)$ is endowed with the norm
\[
\|(f,g)\|_\infty(\theta) = \max\bigl\{\|f\|_\infty, \theta^{-1} \|g\|_\infty \bigr\}.
\]
We denote by $Z^\bullet_b(Y\hookrightarrow X)$ the corresponding cocycles and by $H^\bullet_b(Y\hookrightarrow X)$ the corresponding cohomology.
Similarly, the dual chain complex of $(Z_\bullet(Y\hookrightarrow X), \|\cdot\|^A_1(\theta))$ is $(Z^\bullet_b(Y\hookrightarrow X), \bar{\delta}^\bullet)$ equipped with the norm
\[
\|(f,g)\|^A_\infty(\theta) = \max\{\|f\|_\infty, \theta^{-1} \|g|_{A}\|_\infty \}.
\]
As above, this norm induces a seminorm, still denoted by $\|\cdot\|^A_\infty(\theta)$, in $H^\bullet_b(Y\hookrightarrow X)$.
Consider the chain map
\[
\beta^n \colon C^n_b(X,Y)\rightarrow C^n_b(Y\hookrightarrow X), \qquad \beta^n(f)=(f,0).
\]
In presence of good pairs, we can prove the following.
\begin{lemma}
	\label{lem: lem: mapping cone, isomorphism in cohomology}
	Assume that $(X,Y)$ is a good pair and that every component of $A$ has an amenable fundamental group.
	Then the map
	\[
	H^n(\beta^n) \colon (H^n_b(X,Y), \|\cdot\|_\infty) \rightarrow (H_n(Y\hookrightarrow X), \|\cdot\|_\infty^A(\theta))
	\]
	is an isometric isomorphism for every $\theta \in [0,\infty)$.
\end{lemma}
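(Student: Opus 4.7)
The plan is to first establish the isomorphism in cohomology and then refine it to an isometry. For the isomorphism, I would compare the long exact sequences of bounded cohomology associated with the pair $(X, Y)$ and with the short exact sequence $0 \to C^{\bullet-1}_b(Y) \to C^\bullet_b(Y \to X) \to C^\bullet_b(X) \to 0$ defining the mapping cone; since $\beta^\bullet$ induces the identities on $H^\bullet_b(X)$ and $H^\bullet_b(Y)$ and intertwines the connecting homomorphisms (up to sign), the five lemma yields that $H^n(\beta^n)$ is an isomorphism for every $n$.

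For the isometry, the inequality $\|H^n(\beta^n)(\alpha)\|^A_\infty(\theta) \le \|\alpha\|_\infty$ is immediate, as any cocycle $f \in Z^n_b(X, Y)$ representing $\alpha$ yields $\beta^n(f) = (f, 0)$ with $\|(f, 0)\|^A_\infty(\theta) = \|f\|_\infty$. The substantive direction is $\|\alpha\|_\infty \le \|H^n(\beta^n)(\alpha)\|^A_\infty(\theta)$. Here I would exploit Proposition \ref{prop: relative bounded cohomology and amenable connected components}: since $A$ is a union of components of $Y$, the pair $(X, A)$ is also admissible and the amenability hypothesis on $A$ applies, so the inclusion induces an isometric isomorphism $H^n_b(X, Y) \cong H^n_b(X, B)$ where $B = Y \setminus A$. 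Given a cocycle $(f, g) \in Z^n_b(Y \to X)$ representing $H^n(\beta^n)(\alpha)$ with $\|(f, g)\|^A_\infty(\theta) \le M + \epsilon$, the cocycle identity forces $f|_A = -\delta(g|_A)$ and $f|_B = -\delta(g|_B)$; the plan is to modify $(f, g)$ within its cohomology class so as to eliminate $g|_A$, then absorb $g|_B$ into $f$ via the standard coboundary-extension trick, producing a cocycle $f'' \in Z^n_b(X, B)$ representing the image of $\alpha$ with $\|f''\|_\infty \le M + \epsilon'$, from which the isometric isomorphism above yields $\|\alpha\|_\infty \le M + \epsilon'$.

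The main obstacle is the norm control in the first modification: adjusting a bounded cocycle by a coboundary $\delta u$ generically inflates the $\ell^\infty$-norm by up to $(n+1)\|u\|_\infty$, so merely invoking $H^k_b(A) = 0$ to find \emph{some} bounded primitive of $f|_A$ is insufficient---one needs a primitive whose norm is controlled by $\|g|_A\|_\infty$. This is precisely where the amenable action of $\Pi_X(A)$ on the aspherical pair $(\A(X), \A(A))$, via Proposition \ref{prop: elements of Pi(A,A) are homotopic to the indentity} together with the averaging operator of Proposition \ref{proposition: amenable - invariant resolutions} (recall that $\Pi_X(A)$ is amenable by Lemma \ref{lem: Pi(A,A) is amenable}), becomes essential: passing to the aspherical multicomplex model through Proposition \ref{prop: bounded cohomology via aspherical multicomplex for pairs} and averaging yields the required norm-controlled primitive, completing the argument along the lines of \cite[Lemma 5.2]{bbf+2014isometric}.
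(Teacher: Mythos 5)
Your overall architecture is sound in outline: the five-lemma argument for the isomorphism is fine, the easy inequality is handled correctly, and you correctly identify the key ingredient, namely the isometric isomorphism $H^n_b(X,Y)\cong H^n_b(X,B)$ of Proposition \ref{prop: relative bounded cohomology and amenable connected components}. The genuine gap is in the cocycle-modification strategy for the hard inequality, specifically at the step ``absorb $g|_B$ into $f$''. The seminorm $\|(f,g)\|^A_\infty(\theta)=\max\{\|f\|_\infty,\theta^{-1}\|g|_A\|_\infty\}$ gives \emph{no} control on $\|g|_B\|_\infty$: a near-optimal representative $(f,g)$ of $H^n(\beta^n)(\alpha)$ may have $g|_B$ of arbitrarily large norm. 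The coboundary-extension trick then yields only $\|f''\|_\infty\le\|f'\|_\infty+(n+1)\|g'|_B\|_\infty$, which is unbounded, and there is no amenability hypothesis on $B$ available to repair this. The same difficulty already undermines your first modification: a primitive of $f|_A$ whose norm is ``controlled by $\|g|_A\|_\infty$'' can have size up to $\theta(M+\epsilon)$, so adding its coboundary inflates $\|f\|_\infty$ by up to $(n+1)\theta(M+\epsilon)$ --- a multiplicative loss that is fatal for an isometry. The averaging operators of Proposition \ref{proposition: amenable - invariant resolutions} are norm non-increasing retractions onto invariant cochains; they do not manufacture primitives of arbitrarily small norm.

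The paper sidesteps all cocycle surgery. It notes that, at the level of cochains, the inclusion $j^n\colon Z^n_b(X,Y)\to Z^n_b(X,B)$ factors as $\gamma^n\circ h^n\circ\beta^n$, where $h^n$ restricts the second component to $B$ and $\gamma^n$ then \emph{forgets} it entirely; both are norm non-increasing for the relevant seminorms precisely because no bound on $g|_B$ is ever invoked. Since $H^n(j^n)$ is an isometric isomorphism and each factor is norm non-increasing, one gets $\|\alpha\|_\infty=\|H^n(j^n)(\alpha)\|_\infty\le\|H^n(\beta^n)(\alpha)\|^A_\infty(\theta)\le\|\alpha\|_\infty$, forcing $H^n(\beta^n)$ to be isometric. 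If you wish to retain a cocycle-level argument, you would have to justify that $g|_B$ can be discarded at no cost in cohomology --- which is exactly what this factorization accomplishes and what your proposed modifications do not.
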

\begin{proof}
	By the very same argument of \cite[Proposition 5.3]{BBF+14} one can deduce that $H^n(\beta^n)$ is an isomorphism. In order to show that it is isometric, we need to exploit the fact that every connected component of $A$ has an amenable fundamental group. We denote by $B = Y\setminus A$ be the union of the remaining connected components of $Y$. Consider the chain map 
	\[
	\gamma^\bullet \colon C^\bullet_b(Y\hookrightarrow X)\rightarrow C^\bullet_b(X), \qquad (f,g)\rightarrow f,
	\]
	and the chain map 
	\[
	h^\bullet \colon C^\bullet_b(Y\hookrightarrow X)\rightarrow C^\bullet_b(B\hookrightarrow X),
	\]
	induced by the inclusion $j\colon (X,B)\rightarrow (X,Y)$. It is easy to verify that the composition $\gamma^n\circ h^n\circ \beta^n\colon Z^n_b(X,Y)\rightarrow Z^n_b(X)$ actually takes values in $Z^n_b(X,B)\subseteq Z^n_b(X)$ and in fact it coincides with the chain map 
	\[
	j^n \colon Z^n_b(X,Y)\rightarrow Z^n_b(X,B),
	\]
	induced by the inclusion $j$.
	Since the map $j$ induces an isometric isomorphism in bounded cohomology (Theorem \ref{thm: relative bounded cohomology and amenable connected components}) and since $H^n(\beta^n)$, $H^n(h^n)$ and $H^n(\gamma^n)$ are norm non-increasing, we can conclude that the isomorphism $H^n(\beta^n)$ is isometric for every $n\in \N_{\geq 2}$.
\end{proof}

\begin{proof}[Proof of Theorem \ref{thm: equivalence theorem}]
	We know that $(Z^\bullet_b(X,Y), \|\cdot\|_\infty)$ is the topological dual complex of \[(Z_\bullet(X,Y),\|\cdot\|_1)=(Z_\bullet(X,Y),\|\cdot\|^A_1(0))\] and $(Z^\bullet_b(Y\hookrightarrow X),\|\cdot\|^A_\infty(\theta))$ is the topological dual complex of $(Z_\bullet(Y\hookrightarrow X), \|\cdot\|^A_1(\theta))$. Therefore, from Lemma \ref{lem: lem: mapping cone, isomorphism in cohomology} and L{\"o}h Translation Principle \cite[Theorem 1.1]{Loh08}, we deduce that the map
	\[
	H_n(\beta_n)\colon (H_n(Y\hookrightarrow X), \|\cdot\|_1^A(\theta)) \rightarrow (H_n(X,Y), \|\cdot\|_1)
	\]
	is an isometric isomorphism. In conclusion, we know from Lemma \ref{lem: mapping cone, isomorphism in homology} that also the map
	\[
	H_n(\beta_n)\colon (H_n(Y\hookrightarrow X), \|\cdot\|_1^A(\theta)) \rightarrow (H_n(X,Y), \|\cdot\|^A_1(\theta))
	\]
	is an isometric isomorphism, and therefore we get the statement.
\end{proof}

\section{Proof of Theorem \ref{thm: additivity full boundary components}}
\label{sec: additivity}

Let $n \in \N_{\geq 2}$.
Let $M_1,\dots, M_k$ be triangulated $n$-manifolds.
Consider a pairing $(S_1^+,S_1^-),\dots,(S_h^+,S_h^-)$ of some of their boundary components and let $M$ be the manifold obtained by gluing $M_1,\dots, M_k$ along the pairing.
Assume that $M$ is path-connected.
Let $q_j\colon M_j\rightarrow M$ denote the quotient map.
We still denote by $M_j$ the image of $M_j$ in $M$ via $q_j$.
We denote by $\S$ the union of the manifolds $S_i^\pm$, $i\in \{1,\dots, h\}$, understood as a subset of $M$. 

The goal of this section is to deduce Theorem \ref{thm: additivity full boundary components} from a more general result stated in terms of good pairs. Recall that the union of two aspherical CW-complexes $X$ and $Y$ along a common subcomplex $Z$ is again aspherical, provided that $Z$ is itself aspherical and $\pi_1$-injective in $X$ and $Y$ \cite[Theorem 3.1]{Edm20}. Of course, in this case both the pairs $(X,Z)$ and $(Y,Z)$ are good. 
Building on this fact, it is immediate to deduce Theorem \ref{thm: additivity full boundary components} from the following.
\begin{prop}
	\label{prop: additivity full boundary components}
	If the following conditions hold: for every $j\in \{1,\dots, k\}$
	\begin{enumerate}
		\item[$(1)$] $(M,\partial M)$, $(M,M_j)$ and $(M_j, \partial M_j)$ are good pairs;
		\item[$(2)$] Every connected component of $\S$ has amenable fundamental group;
	\end{enumerate}
	then $\|M,\partial M\| = \|M_1,\partial M_1\| + \dots + \|M_k,\partial M_k\|$.
\end{prop}
We subdivide the proof of Proposition \ref{prop: additivity full boundary components} in two steps. In the first, the inequality
\begin{equation}
	\label{eq: subadditivity}
	\|M,\partial M\| \leq \|M_1,\partial M_1\| + \dots + \|M_k,\partial M_k\|
\end{equation}
is deduced as an application of Gromov Equivalence Theorem for good pairs (Corollary \ref{cor: equivalence theorem, equivalent reformulation}) and the Uniform Boundary Condition of Matsumoto and Morita \cite{Mat85}. Our proof follows closely the argument in \cite[Remark 6.2]{BBF+14}.
In the second, we show how the inequality
\begin{equation}
	\label{eq: superadditivity}
	\|M,\partial M\| \geq \|M_1,\partial M_1\| + \dots + \|M_k,\partial M_k\|,
\end{equation}
can be deduced by adapting the argument presented in Section \ref{sec: superadditivity}.

\subsection{Proof of subadditivity}
Since the pairs $(M_j,\partial M_j)$, $j\in \{1,\dots ,k\}$, are good, by Corollary \ref{cor: equivalence theorem, equivalent reformulation}, we can choose a real fundamental cycle $c_j\in C_n(M_j,\partial M_j)$ of $M_j$ such that
\[
\|c_j\|_1\leq \|M_j,\partial M_j\| + \epsilon, \qquad \|{\partial_n c_j}|_{\S_j}\|_1 < \epsilon.
\]
Let $c = c_1 + \dots + c_k\in C_n(M)$, where we identify every chain in $M_j$ with its image in $M$. 
Since $\partial c_j$ is the sum of real fundamental cycles of the boundary components of $M_j$, and since the gluing maps are orientation-reversing, it follows that $\partial c = \partial b + z$ for some $b\in C_n(\S)$ and $z \in C_{n-1}(\partial M)$.
Moreover, every connected component of $\S$ has amenable fundamental group, hence the singular chain complex of $\S$ satisfies the Uniform Boundary Condition in every positive degree \cite[Theorem 2.8]{Mat85}. Therefore, we can assume that the following inequalities
\[
\|b\|_1\leq \alpha \|\partial c - z\|_1 \leq \alpha k \epsilon
\]
hold for some universal constant $\alpha$.
The chain $c'=c-b$ is a relative fundamental cycle for $(M,\partial M)$. It follows that
\[
\|M,\partial M\| \leq \|c'\|_1\leq \|c\|_1 + \|b\|_1 \leq \sum_{j=1}^k \|M_j,\partial M_j\| + k\epsilon + \alpha k \epsilon,
\]
which implies (\ref{eq: subadditivity}), since $\epsilon$ is arbitrary.
\subsection{Proof of superadditivity}
We show how to adapt the argument presented in Section \ref{sec: superadditivity} to get the desired inequality.
In fact, under our assumptions, we know that the pairs $(M,\partial M)$ and $(M_j,B_j)$ are good, for every $j \in \{1,\dots, k\}$.
Therefore, in the proof of Proposition \ref{prop: superadditivity}, we can replace the bi-Lipschitz isomorphisms $\Psi^n$ and $\Psi_j^n$ in the diagram in Figure \ref{eq: diagram superadditivity} with the \emph{isometric} isomorphisms
\[
\Phi^n\colon H^n_b(\A(M),\A(\partial M)) \rightarrow H^n_b(M,\partial M),
\]
\[
\Phi_j^n\colon H^n_b(\A(M_j),\A(B_j)) \rightarrow H^n_b(M_j,B_j),
\]
from Theorem \ref{thm: relative mapping theorem, isometric version}.
Moreover, since the action of the group $G=\Pi_M(\S)$ on $\A(M)$ (resp. $\A(M_j)$) preserves the subcomplex $\A(\partial M)$ (resp. $\A(B_j)$), we can replace the bi-Lipschitz isomorphisms $I^n$ and $I^n_j$ in the diagram in Figure \ref{eq: diagram superadditivity} with the \emph{isometric} isomorphisms
\[
H^n(C^\bullet_b(\A(M),\A(\partial M))^G_\alt)\rightarrow H^n_b(\A(M),\A(\partial M)),
\]
\[
H^n(C^\bullet_b(\A(M_j),\A(B_j))^G_\alt)\rightarrow H^n_b(\A(M_j),\A(B_j)),
\]
from Proposition \ref{proposition: amenable - invariant resolutions}.
The same construction then applies and the inequality (\ref{eq: superadditivity}) easily follows.


\bibliographystyle{alpha}
\bibliography{multicomplexes}

\end{document}